\let\oldmarginpar\marginpar
\renewcommand\marginpar[1]{\oldmarginpar[\raggedleft\footnotesize #1]%
{\raggedright\footnotesize #1}}
\renewcommand{\setminus}{{\smallsetminus}}
\newcommand{\RR}{{\mathbb{R}}}
\newcommand{\Z}{{\mathbb{Z}}}
\newcommand{\vol}{{\rm vol}}
\newcommand{\lmin}{{\ell_{\rm min}}}
\theoremstyle{plain}
\newtheorem{theorem}{Theorem}[section]
\newtheorem{corollary}[theorem]{Corollary}
\newtheorem{lemma}[theorem]{Lemma}
\newtheorem{proposition}[theorem]{Proposition}
\newtheorem{conjecture}[theorem]{Conjecture}
\newtheorem*{namedtheorem}{\theoremname}
\newcommand{\theoremname}{testing}
\newenvironment{named}[1]{\renewcommand{\theoremname}{#1}\begin{namedtheorem}}{\end{namedtheorem}}
\theoremstyle{definition}
\newtheorem{definition}[theorem]{Definition}
\newtheorem{remark}[theorem]{Remark}
\title{Gromov norm and Turaev-Viro invariants of 3-manifolds}
\author{Renaud Detcherry}
\author{Efstratia Kalfagianni}
\address[]{Department of Mathematics, Michigan State University, East
Lansing, MI, 48824, USA}
\email[]{kalfagia@math.msu.edu}
\email[]{renaud.detcherry@gmail.com}
\begin{document}

\thanks{Research supported by NSF Grants DMS-1404754 and DMS-1708249} \date{\today}
\maketitle


\begin{abstract} We establish a relation between the 
``large r" asymptotics of the Turaev-Viro invariants $TV_r$ and  the Gromov norm of 3-manifolds.
We show that for any orientable, compact  3-manifold $M$,  with (possibly empty) toroidal boundary,  $\log |TV_r (M)|$ is bounded above by a function linear in $r$ and whose slope is
a positive universal constant times the Gromov norm of $M$.
The proof  combines TQFT techniques, geometric decomposition theory of 3-manifolds and analytical estimates of $6j$-symbols.

We obtain topological criteria that can be used to check whether the growth is actually exponential; that is one has  $\log| TV_r (M)|\geqslant B \ r$, for some $B>0$.
We use these criteria  to construct infinite families of hyperbolic 3-manifolds whose $SO(3)$
Turaev-Viro invariants grow exponentially. 
These constructions  are essential for the results of  \cite{DK:AMU} where the authors  make progress on a conjecture of Andersen, Masbaum and Ueno about  the geometric properties of surface mapping class groups detected by the quantum representations.

We also study the behavior of the Turaev-Viro invariants under cutting and gluing of 3-manifolds along tori.
In particular, we show that, like the Gromov norm, the values of the invariants  do not increase under Dehn filling and we  give applications of this result on the question of the extent to which 
relations between the invariants  $TV_r$ and hyperbolic volume are preserved under Dehn filling.

Finally we give constructions of 3-manifolds, both with  zero and non-zero Gromov norm,  for which the Turaev-Viro invariants determine the Gromov norm.

 \end{abstract}


\section{Introduction}

Since the discovery of the  quantum 3-manifold invariants in the late 80's,  it has been a major challenge to understand their relations to the topology and geometry of 3-manifolds. Open conjectures predict tight connections between quantum invariants and the geometries coming from Thurston's 
geometrization picture \cite{book, Chen-Yang}. However, despite compelling physics and experimental evidence,  progress to these conjectures has been scarce. For instance, the volume conjecture
for the colored Jones polynomial  has only been verified for a handful of hyperbolic knots to date. The reader is referred to \cite{book} for survey articles on the subject and for
related conjectures.  On the other hand, coarse relations between the stable coefficients of  colored Jones polynomials and volume
have been established for an abundance of  hyperbolic knots \cite{DL, fkp:guts, fkp:filling}.

In this paper we are concerned with the question of
how the ``large level" asymptotics of the Turaev-Viro  3-manifold invariants relate to,  and interact with,  the geometric decomposition of 3-manifolds.
The Turaev-Viro invariants $TV_r(M)$ of a compact oriented $3$-manifold $M$  are 
combinatorially defined invariants that can be computed from triangulations of $M$ \cite{TuraevViro}.
They are real valued invariants,  indexed by a positive integer $r$, called the level,   and for each $r$ they depend on a $2r$-th root of unity.
We combine TQFT techniques, geometric decomposition theory of 3-manifolds and analytical estimates of $6j$-symbols  to show that the $r$-growth of $TV_r(M)$  is
bounded above by a function exponential in $r$ that involves the Gromov norm of $M$.

We also obtain topological criteria for the growth to be exponential; that is  to have $TV_r(M)\geqslant \exp{Br}$  with $B$ a positive constant.
We use these criteria  to construct infinite families of hyperbolic 3-manifolds whose $SO(3)$-Turaev-Viro invariants grow exponentially. 
These results are used by the authors \cite{DK:AMU} to make progress on a conjecture of Andersen, Masbaum and Ueno (AMU Conjecture) about the geometric properties of surface mapping class groups detected by the quantum representations.

\subsection{Upper bounds} For a compact oriented 3-manifold $M,$ 
 let  $TV_r( M,q)$ denote the $r$-th Turaev-Viro invariant of $M$ at  $q$, where $q$ is a $2r$-th root of unity such that $q^2$ is a primitive $r$-th root of unity. Throughout the paper we will work with
 $q=e^{\frac{2\pi i}{r}}$ and $r$ an odd integer and we will often write $TV_r( M):=TV_r(M, e^{\frac{2\pi i}{r}})$.  This is the theory that  corresponds to the  $SO(3)$ gauge group.  We define

\begin{equation}
LTV(M)=\underset{r\to \infty}{\limsup} \frac {2\pi}  {r} \log |TV_r(M))|
\label{suplim},
\end{equation}
and
\begin{equation}
lTV(M)=\underset{r\to \infty}{\liminf} \frac {2\pi}  {r} \log |TV_r(M))|
\label{suplim},
\end{equation}
where $r$ runs over all odd integers. Also we will use $ ||M||$ to denote the Gromov norm (or simplicial volume) of $M$. See Section \ref{sec:simplicialvol}  for definitions.
 The main result of this article is the following.
\begin{theorem}\label{LTVbound}
There exists a universal constant $C>0$ such that for any compact orientable $3$-manifold $M$ with empty or toroidal boundary we have
$$ LTV(M)\leqslant C||M||.$$
\end{theorem}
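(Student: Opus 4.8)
The plan is to bound $LTV(M)$ using three inputs: the Benedetti--Petronio / Turaev identity $TV_r(N)=\sum_\alpha|RT_r(N;\alpha)|^2$ (sum over colourings $\alpha$ of $\partial N$) together with the cut-and-paste properties of the underlying TQFT; the theorem of Costantino--Thurston that every compact orientable $3$-manifold is a Dehn filling of a member of a universal family of cusped hyperbolic manifolds built out of regular ideal octahedra, with number of octahedra linear in the Gromov norm; and an analytic upper bound on the quantum $6j$-symbols at $q=e^{2\pi i/r}$.

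\textbf{Step 1 (reduction to an octahedral manifold).} By the shadow theory of Costantino--Thurston there is an absolute constant $c_0>0$ such that $M$ is obtained by Dehn filling some of the cusps of a link complement $W_c=\bigl(\#^{c+1}(S^1\times S^2)\bigr)\setminus L_c$, where $L_c$ is a fundamental shadow link, $W_c$ admits a complete finite-volume hyperbolic metric glued from $c$ regular ideal octahedra, and $c\leqslant c_0\|M\|$; in particular $c=0$ whenever $\|M\|=0$. At the level of the TQFT, $Z_{TV}(M)$ is obtained from $Z_{TV}(W_c)$ by contracting the tensor factors of the filled cusps against the solid-torus vectors $v_\gamma\in V_r(T^2)$, one for each filling slope $\gamma$. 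Each $v_\gamma$ is the image of the basis vector $e_0$ under a product of $S$- and $T$-matrices determined by $\gamma$; as these matrices have entries of size $O(\mathrm{poly}(r))$ and the number of factors is fixed once $M$ is fixed, $\|v_\gamma\|_{\ell^2}=O(\mathrm{poly}(r))$. Since $\|Z_{TV}(N)\|_{\ell^2}=TV_r(N)$ by the identity above, the Cauchy--Schwarz inequality for the partial contraction gives $TV_r(M)\leqslant \mathrm{poly}(r)\cdot TV_r(W_c)$, hence $LTV(M)\leqslant LTV(W_c)$.

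\textbf{Step 2 (estimating $LTV(W_c)$).} The fundamental shadow link $L_c$ admits a shadow with $c$ internal vertices, and $TV_r(W_c)$ is computed by a state sum over colourings of this fixed shadow in which each internal vertex contributes (the squared modulus of) a quantum $6j$-symbol and all other data contribute weights of size $O(\mathrm{poly}(r))$; there are $O(\mathrm{poly}(r))$ colourings. Invoking the uniform bound that every admissible $6j$-symbol at $q=e^{2\pi i/r}$ has modulus at most $\mathrm{poly}(r)\cdot\exp\bigl(\tfrac{r}{2\pi}\mu\bigr)$ for an absolute constant $\mu>0$, we get $\log|TV_r(W_c)|\leqslant \tfrac{r}{2\pi}\,C_1 c+O(\log r)$ with $C_1$ absolute, hence $LTV(W_c)\leqslant C_1 c$. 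Combining with Step 1, $LTV(M)\leqslant C_1 c\leqslant C_1 c_0\|M\|$, which is the theorem with $C=C_1 c_0$.

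\textbf{Main obstacle.} The crux is the uniform estimate on $6j$-symbols used in Step 2: it must hold for \emph{all} admissible colourings and \emph{all} odd $r$ at once, with an absolute implied constant, and this is precisely the ``analytical estimates of $6j$-symbols'' of the abstract. A naive term-by-term estimate fails, since a $6j$-symbol is a sum of products of $\Theta(r)$ quantum integers $[k]$ while $|[k]|$ is only $O(r)$, giving a super-exponential bound; the required cancellations are obtained by viewing the $6j$-symbol as a balanced ratio of quantum factorials and by the fact that $\log|[n]!|$ is, via a Riemann-sum computation, controlled by the bounded Lobachevsky function. Making this rigorous with explicit control of error terms is the technical heart of the proof. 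Two ancillary points are the precise form of the Cauchy--Schwarz estimate of Step 1 when $M$ has several boundary components, and confirming that the Costantino--Thurston bound holds in the exact form $c\leqslant c_0\|M\|$ (so that manifolds of zero Gromov norm, e.g.\ graph manifolds, are realised with $c=0$).
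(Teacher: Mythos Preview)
Your route is genuinely different from the paper's, and for manifolds with $\|M\|>0$ it is essentially sound: the Dehn-filling inequality you sketch in Step~1 is exactly Corollary~\ref{dehnfilling} (proved via the same Cauchy--Schwarz/TQFT argument), and your Step~2 is the content of Theorem~\ref{nbtetrahedrabound} once one notes that $W_c$ admits an ideal triangulation with $O(c)$ tetrahedra. You are also right that the uniform $6j$-symbol bound (Proposition~\ref{6jbound}) is the technical heart, and your description of how it is obtained---quantum factorials controlled by the Lobachevsky function via a Riemann-sum/Euler--MacLaurin argument---matches the paper's Proposition~\ref{quantumfact}.

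The genuine gap is the $\|M\|=0$ case, and it is not the ``ancillary point'' you suggest. The Costantino--Thurston inequality $sc(M)\leqslant c_0\|M\|$ does give $sc(M)=0$ for graph manifolds, but shadow complexity zero means that $M$ admits a shadow with \emph{no vertices} (a surface with gleams), not that $M$ is a Dehn filling of some fixed ``$W_0$''. The fundamental shadow link complements $W_c$ are only defined for $c\geqslant 1$; there is no universal $W_0$ such that every graph manifold is a Dehn filling of it. So your scheme, as written, yields no bound at all when $\|M\|=0$, and the theorem requires $LTV(M)\leqslant 0$ in that case.

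The paper closes this gap by a separate argument that occupies Sections~\ref{sec:cuttingtori} and~\ref{sec:TVSeifert}: one shows $LTV\leqslant 0$ for Seifert fibred manifolds directly via TQFT (Theorem~\ref{TVSeifert}), then uses the JSJ decomposition together with the torus-cutting inequality (Theorem~\ref{toruscutting}) to handle arbitrary graph manifolds; the hyperbolic pieces are treated via Thurston's drilling/triangulation theorem (Theorem~\ref{Thurstontriangulation}) rather than Costantino--Thurston. Your shadow approach could be salvaged in a different way: a vertex-free shadow gives a state sum for $RT_r(M)$ with no $6j$-symbol factors, hence polynomially bounded, which yields $LTV(M)\leqslant 0$ directly. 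But that is not the argument you wrote, and either fix requires roughly the same amount of additional work as the paper's treatment of the zero-norm case.
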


If the interior of $M$ admits a complete hyperbolic structure then, by Mostow rigidity, the hyperbolic metric is essentially unique
and the volume of the metric is a topological invariant denoted by $\vol (M)$, that is essentially the Gromov norm.
In this case, Theorem \ref{LTVbound} provides a relation between hyperbolic geometry and the Turaev-Viro invariants.
If $M$ is the complement of a hyperbolic link in $S^3$ then we know that $lTV(M)\geqslant 0$ and in many instances the inequality is strict (Corollary \ref{expgrowth} ).

The  problem of estimating the volume of hyperbolic 3-manifolds in terms of topological quantities and quantum invariants, 
has been studied considerably in the literature. See for example 
\cite{ast, fkp:guts, FKP} and references in the last item.  
 Despite  progress, to the best of our knowledge, Theorem  \ref{LTVbound} gives the first such linear  lower  bound that works for all hyperbolic  3-manifolds.
 
In the generality that Theorem \ref{LTVbound}  is stated, the constant $C$ is about $8.3581 \times 10^9$. However, within classes of 3-manifolds, one has much more effective
estimates. For instance, Theorem \ref{twistnumberbound} of this paper
shows that  for most (in a certain sense) hyperbolic links  $L\subset S^3$ we have
$$ LTV(S^3\setminus L )\leqslant 10. 5 \  \vol(S^3\setminus L ).$$
Furthermore, given any constant $E$ arbitrarily close to 1, one has infinite families of  hyperbolic closed and cusped 3-manifolds $M$, with $LTV(M)\leqslant \  E \ \vol(M)$. See Section \ref{twist}. 
for precise statements and more details.
  
 We also give   families of 3-manifolds with $ LTV(M)=||M||$.
One such  family  of examples is the class of links  with zero Gromov norm in $S^3$ or in $S^1\times S^2$, but we also   present families with non-zero norm (Section \ref{Exact}).
 \begin{corollary}Suppose that $M$ is $S^3$ or a connected sum of copies of  $S^1\times S^2$.
 Then, for any link   $K\subset M$ with $||M\setminus K||=0$, we have

$$lTV(M)=LTV(M)=\lim_{r\to \infty} \frac {2\pi} {r} {\log |TV_r(M\setminus K)|} =v_3 ||M\setminus K||=0,$$
where $r$ runs over all odd integers. 
\end{corollary}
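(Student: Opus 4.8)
The statement bundles an upper and a lower estimate which pinch all the listed quantities to $0$; here the quantities $lTV$ and $LTV$ in the conclusion are those of the exterior $N := M\setminus K$, and by hypothesis $||N|| = 0$, so $v_3||N|| = 0$. For the upper estimate I would simply apply Theorem~\ref{LTVbound} to $N$: a link exterior has empty or toroidal boundary, so $LTV(N) \le C||N|| = 0$. Since a $\liminf$ never exceeds the matching $\limsup$, this already gives $lTV(N) \le LTV(N) \le 0$, and the whole content of the corollary reduces to the reverse inequality $lTV(N) \ge 0$.

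For that lower estimate the plan is to show that $|TV_r(N)|$ cannot decay faster than polynomially in $r$, i.e. to produce constants $c,a > 0$ independent of $r$ with $|TV_r(N)| \ge c\,r^{-a}$ for all odd $r$; then $\frac{2\pi}{r}\log|TV_r(N)| \ge \frac{2\pi}{r}\bigl(\log c - a\log r\bigr) \to 0$, whence $lTV(N) \ge 0$. I would get this polynomial bound from the description of $TV_r$ of a link exterior in one of these ambient manifolds as a \emph{positive} combination of squared colored Reshetikhin--Turaev invariants of the pair: for $M = S^3$ this is the Detcherry--Kalfagianni--Yang formula writing $TV_r(S^3\setminus K)$ as $\eta_r^2$, times an explicit factor depending only on $r$, times $\sum_{\mathbf c}|J_K(\mathbf c)|^2$; keeping only the summand coming from the trivial coloring $\mathbf c = (1,\dots,1)$, for which $|J_K(\mathbf c)|^2 = 1$, already bounds $|TV_r(S^3\setminus K)|$ below by a fixed rational function of $\sin(2\pi/r)$ and $\sqrt r$, which is polynomial in $r^{\pm1}$. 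For $M$ a connected sum of copies of $S^1\times S^2$ the same scheme applies with the analogous relative Reshetikhin--Turaev expansion: the surviving trivial-coloring term is $\|v_0\|^2\,|RT_r(M)|^2$, where $v_0$ is the vacuum vector of $V_r(T^2)^{\otimes n}$; both factors are powers of $\eta_r$ — using $RT_r(\#_k\, S^1\times S^2) = \eta_r^{1-k}$, which follows from $RT_r(S^3)=\eta_r$, $RT_r(S^1\times S^2)=1$ and the connected sum formula — hence again polynomial in $r^{\pm1}$. Equivalently, one may use that $TV_r$ does not increase under Dehn filling: filling the meridians of $K$ recovers $M$, so $|TV_r(N)| \ge |TV_r(M)| = |RT_r(M)|^2$ by the Turaev--Walker--Roberts identity, and then apply the computation of $RT_r(M)$ above.

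Combining the two halves, $0 \le lTV(N) \le LTV(N) \le C||N|| = 0$, so $lTV(N) = LTV(N) = 0$; in particular the limit $\lim_{r}\frac{2\pi}{r}\log|TV_r(N)|$ exists and equals $0$, which is also $v_3||N||$, as claimed. The step I expect to be the real work is the lower bound in the $S^1\times S^2$ case: one must check that the sum-of-squares expansion for $TV_r$ of a link exterior genuinely extends from $S^3$ to connected sums of $S^1\times S^2$ (equivalently, that Dehn filling monotonicity together with $TV_r = |RT_r|^2$ applies in that setting), and that the normalizing factor is of polynomial, not exponentially small, size in $r$; both should be routine once the relevant TQFT normalizations — $\eta_r$ and the vacuum-vector norms — are written out explicitly.
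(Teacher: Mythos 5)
Your proposal is correct and, through its ``equivalently'' branch, is essentially the paper's own argument: the upper bound comes from Theorem~\ref{LTVbound} applied to $M\setminus K$, and the lower bound comes from Dehn-filling monotonicity (Corollary~\ref{dehnfilling}) together with $TV_r(M)=|RT_r(M)|^2$ and the values $RT_r(S^3)=\eta_r$, $RT_r(S^1\times S^2)=1$ with the connected-sum formula. The only cosmetic difference is that the paper packages the closed-manifold input as $lTV(S^3), lTV(S^1\times S^2)\geqslant 0$ plus superadditivity of $lTV$ under connected sums (Proposition~\ref{LTVconnectedsum}) rather than computing $RT_r(\#_k\, S^1\times S^2)=\eta_r^{1-k}$ explicitly, and your alternative first route for $S^3$ via the colored Jones state sum of \cite{DKY} is not needed.
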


  \subsection{Outline of proof of Theorem \ref{LTVbound}}
 A key step in the proof is to show that $LTV(M)$ is finite for any compact oriented $3$-manifold $M$.
 This is done 
by studying the large $r$ asymptotic behavior of the quantum $6j$-symbols, and using the state sum formulae for the invariants $TV_r$. 
More specifically, we prove the following.
\begin{theorem}\label{nbtetrahedrabound} Suppose that $M$ is a compact, oriented manifold with a triangulation consisting of  $t$ tetrahedra. Then, we have
$$LTV(M)\leqslant  2.08\  v_8\  t,$$

where $v_8\simeq  3.6638$ is the volume of  a regular ideal octahedron.\end{theorem}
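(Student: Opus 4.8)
The plan is to bound $|TV_r(M)|$ directly from the Turaev--Viro state sum over a fixed triangulation $\mathcal{T}$ with $t$ tetrahedra, and then take the appropriate $\limsup$. Recall that $TV_r(M) = \sum_{\text{colorings}} \prod_{\text{edges}} w_e \prod_{\text{tetrahedra}} |6j|$, where the sum is over admissible colorings of the edges of $\mathcal{T}$ by elements of the index set $I_r$ (of size roughly $r/2$ in the $SO(3)$ theory), each edge weight $w_e$ is a quantum-integer-type factor, and each tetrahedron contributes a quantum $6j$-symbol. The first step is to count: there are at most $(r/2)^{|E|}$ colorings, where $|E|$ is the number of edges. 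Since $\frac{2\pi}{r}\log\big((r/2)^{|E|}\big) \to 0$ as $r\to\infty$, the number of colorings contributes nothing to $LTV(M)$; likewise the edge and vertex normalization factors, being at most polynomially large in $r$, contribute nothing. Hence $LTV(M) \leqslant \limsup_{r\to\infty} \frac{2\pi}{r}\log\big(\max_{\text{coloring}} \prod_{\text{tet}} |6j|\big) \leqslant t \cdot \limsup_{r\to\infty} \frac{2\pi}{r}\log\big(\max |6j|\big)$, so everything reduces to a single uniform exponential bound on the growth rate of quantum $6j$-symbols evaluated at $q = e^{2\pi i/r}$.

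The heart of the argument is therefore the estimate $\limsup_{r\to\infty}\frac{2\pi}{r}\log\big(\max_{\text{admissible }(i,j,k,l,m,n)} |6j\{i,j,k,l,m,n\}|\big) \leqslant 2.08\, v_8$. This is an analytic statement about the explicit Racah/quantum-$6j$ formula: one writes the $6j$-symbol as a product of square roots of quantum factorials times an alternating sum $\sum_z (-1)^z [z+1]! / \big(\prod(\cdots)!\big)$. Each quantum factorial $[n]!$ has $\log|[n]!|$ controlled, after dividing by $r$ and letting $r\to\infty$, by an integral of $\log|2\sin(\pi x/r)|$, i.e. by values of the Lobachevsky function $\Lambda$. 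Optimizing the resulting expression over all admissible six-tuples of colors (equivalently, over the normalized parameters in $[0,1]^6$ subject to the triangle and quadrilateral inequalities that admissibility imposes) produces a maximum which one must show is at most $2.08\, v_8$. The alternating sum can be bounded crudely by (number of terms) $\times$ (max term), and the number of terms is linear in $r$, hence again invisible after dividing by $r$; so it suffices to bound each individual term's growth. I would invoke (or reprove) the asymptotic expansion of $6j$-symbols — in the relevant regime this is governed by the volume of a (possibly degenerate) hyperbolic or spherical tetrahedron, whose volume is universally bounded by $v_8$ (the maximal volume of an ideal hyperbolic tetrahedron being $v_3$, but for the truncated/generalized tetrahedra arising here the sharp universal bound involves $v_8$), and then the constant $2.08$ is slack absorbing the contributions of the prefactors and the finite-sum overcounting.

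The main obstacle is making the $6j$ growth-rate bound genuinely \emph{uniform} over all admissible colorings and all odd $r$ simultaneously, rather than a pointwise asymptotic along a fixed sequence of colors scaling linearly with $r$. Pointwise asymptotics of $6j$-symbols (of the kind appearing in the Chen--Yang volume conjecture heuristics) are delicate near degenerate configurations and at the boundary of the admissibility region, where the tetrahedron volume function is only semicontinuous; one must show the growth rate does not blow up there. The cleanest route is to bound $\log|[n]!|$ for all $0 \le n \le r$ by a single explicit function (essentially $\frac{r}{2\pi}\Lambda(\pi n/r)$ plus a uniformly bounded error), substitute this into the $6j$ formula to get, for every admissible six-tuple, an upper bound of the form $\exp\big(\frac{r}{2\pi} F(\vec{x}) + o(r)\big)$ with $F$ an explicit continuous function on the compact admissibility polytope, and then bound $\max F$ by a numerical optimization yielding the constant $2.08\, v_8$. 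Once this uniform $6j$ bound is in hand, assembling it with the coloring-count bound from the first paragraph gives Theorem \ref{nbtetrahedrabound} immediately, and combining with the fact that every compact oriented $3$-manifold admits some finite triangulation shows in particular that $LTV(M)$ is finite, the key qualitative input flagged in the outline.
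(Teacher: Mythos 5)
Your reduction of the theorem to a uniform bound on quantum $6j$-symbols is exactly the paper's first step: the coloring count, edge weights and $\eta_r$-normalizations are polynomial in $r$ and vanish after multiplying by $\frac{2\pi}{r}$, so everything rests on a single uniform estimate for $\frac{2\pi}{r}\log|6j|$. The gap is in how you obtain that estimate. Your primary suggestion --- invoking the asymptotic expansion of $6j$-symbols governed by volumes of (truncated) tetrahedra, ``universally bounded by $v_8$, with $2.08$ as slack absorbing prefactors and the sum overcounting'' --- does not work here. Those pointwise asymptotics (Costantino's) are only valid for colorings satisfying special admissibility conditions, not uniformly over the whole admissible polytope, which is precisely the obstacle you yourself flag; and the statement that the uniform growth rate is at most $v_8$ is the \emph{sharp} result, proved only later (in \cite{BDKY}, building on this paper), not something one can cite to prove the present theorem. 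Moreover, the quantitative picture behind ``$2.08$ is slack'' is wrong: in the crude term-by-term bound actually used, the prefactor $\prod_{i=1}^4\Delta(F_i)$ contributes up to $v_8$ to the exponential rate (each face function $v$ has maximum $\frac{v_8}{4}$) and the largest term of the alternating sum contributes up to $8\Lambda(\frac{\pi}{8})\simeq 3.93$, so the two contributions are comparable and their total, $v_8+8\Lambda(\frac{\pi}{8})\simeq 7.5914$, only barely undercuts $2.08\,v_8\simeq 7.6207$. There is essentially no slack.

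Your fallback (``cleanest route'') is the paper's actual argument: a uniform Euler--MacLaurin estimate $\log(ev_r(\{n\}!))=-\frac{r}{2\pi}\Lambda(\frac{2\pi n}{r})+O(\log r)$ (note the argument is $\frac{2\pi n}{r}$ at $q=e^{2\pi i/r}$, not $\frac{\pi n}{r}$), substitution into the $6j$ formula, and maximization of the resulting continuous function over the admissibility region. But you leave that maximization to ``a numerical optimization yielding the constant $2.08\,v_8$,'' whereas this is the mathematical heart of the proof and, given the $0.4\%$ margin noted above, cannot be waved through. The paper carries it out exactly (Lemma \ref{maxfunct6j}, proved in the Appendix by a critical-point analysis using $\Lambda'(x)=-\log|2\sin x|$): the face function $v$ has maximum $\frac{v_8}{4}$ and the summand function $g$ has maximum $8\Lambda(\frac{\pi}{8})$, giving the uniform bound $v_8+8\Lambda(\frac{\pi}{8})<2.08\,v_8$ of Proposition \ref{6jbound}. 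Without this (or an equally rigorous certified optimization), your argument establishes only that $LTV(M)\leqslant Ct$ for some unspecified constant $C$, i.e.\ finiteness of $LTV$, but not the stated inequality.
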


A second  key argument we need is Theorem \ref{toruscutting} of the paper which
 describes the behavior of the Turaev-Viro invariants  under the operation of gluing or cutting 3-manifolds along tori. The proof of the theorem uses a version of a result  of Roberts  and Benedetti-Petronio  that
relates  $TV_r(M,e^{\frac{2\pi i}{r}})$ to the  $SO(3)$-Witten-Reshetikhin-Turaev  invariants,  and  it relies heavily on the properties of the corresponding 
TQFT as constructed by
Blanchet, Habegger, Masbaum and Vogel\,\cite{BHMV2}.

 By the Geometrization Theorem, a compact oriented  3-manifold $M$, with possibly
empty toroidal boundary can be cut along a canonical  collection of tori into pieces that are either Seifert fibered manifolds or hyperbolic (JSJ-decomposition).
We study the invariant $LTV$ for Seifert fibered manifolds using TQFT properties and Theorem \ref{toruscutting}.
Then we  prove Theorem \ref{LTVbound} by exploiting, by means of  Theorem \ref{toruscutting}, compatibility properties of $||M||$  and $LTV$ with the JSJ decomposition, and
 by studying separately  the case of hyperbolic manifolds using a theorem of Thurston.

\subsection{Lower bounds and the AMU Conjecture}  A very interesting problem, that we  will not address in this paper, is to prove the opposite inequality  of that given in Theorem \ref{LTVbound}.
We will discuss the weaker problem of exponential  $r$-growth of the invariants $TV_r(M)$.
Define
$$lTV(M)=\underset{r\to \infty}{\liminf} \frac {2\pi}  {r} \log |TV_r(M)|,$$
where $r$ runs over all odd integers.

In Section \ref{sec:cuttingtori} we show that, much  like the Gromov norm, the value of the invariant $lTV(M)$
does not increase under  the operation of  Dehn filling.  We also  discuss applications to the question of the extent to which relations between Turaev-Viro invariants and hyperbolic volume
are preserved under Dehn filling.  This in turn, leads to a topological criterion for checking whether 
the invariants $TV_r(M)$, of a  3-manifold $M$,  grow exponentially with respect to $r$. That is  checking 
whether  $lTV(M)>0$.  As a concrete application of this criterion,  combined with a result of \cite{DKY}, we mention the following.
More general results along these lines were obtained in \cite{BDKY}.

  \begin{corollary}\label{expgrowth}  Let $M\subset S^3$ denote the complement of  the figure-8  knot or the Borromean rings.
  For any link  $L\subset M$ we have 
  $$lTV(M\setminus L)\geqslant 2 v_3,$$
  
 where $v_3\simeq  1.0149$
  is volume of a regular ideal tetrahedron.
  \end{corollary}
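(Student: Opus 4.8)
The plan is to deduce this corollary from the two general facts that the excerpt has already established about $lTV$: first, that $lTV$ does not increase under Dehn filling (stated in the introduction and proved in Section \ref{sec:cuttingtori}), and second, whatever lower bound for $lTV$ comes out of the cited result of \cite{DKY}. Concretely, let $N$ denote either the figure-eight knot complement or the Borromean rings complement. The key input I would invoke is that $lTV(N) \geqslant 2v_3$; for the figure-eight complement this should follow from the known exponential growth of its Turaev--Viro (equivalently, colored Jones at the relevant root of unity) invariants, with the precise exponential rate $2v_3 = 2\vol(\text{ideal tetrahedron})$ matching (twice) the hyperbolic volume $2v_3$ of the figure-eight complement — this is exactly the kind of statement proved in \cite{DKY}. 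For the Borromean rings, whose volume is $2v_8 > 2v_3$, one still gets at least $2v_3$ from the same source, or one notes the Borromean rings complement Dehn-fills (on appropriate slopes) onto the figure-eight complement.

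The second step handles an arbitrary link $L \subset M$. Here I would use that $M \setminus L$ is obtained from $M$ (viewed with one extra torus boundary component per component of $L$, i.e.\ from a link exterior) by Dehn filling: more precisely, $M$ itself is a Dehn filling of $M \setminus L$, filling each new cusp along the meridian slope of the corresponding component of $L$. Since Theorem \ref{toruscutting} and its consequences in Section \ref{sec:cuttingtori} give that $lTV$ does not increase under Dehn filling, we obtain
$$lTV(M \setminus L) \geqslant lTV(M) \geqslant 2v_3,$$
which is precisely the claimed inequality.

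The only subtlety — and the step I expect to require the most care — is the non-increase of $lTV$ under Dehn filling when the filled manifold has nonempty boundary, together with pinning down the exact constant $2v_3$ for $N$ itself rather than some smaller positive number. The monotonicity statement should be read off cleanly from the cutting-and-gluing machinery built on the \cite{BHMV2} TQFT (one expresses $TV_r$ of the filled manifold as a sum, over the relevant colorings, of WRT-type pairings, and bounds it below by a single diagonal term coming from the unfilled manifold), so the real content lives in \cite{DKY}; once that reference supplies $lTV(N) \geqslant 2v_3$, the corollary is immediate. I would therefore structure the short proof as: (i) cite \cite{DKY} for $lTV(N)\geqslant 2v_3$; (ii) observe $M$ is a meridian Dehn filling of $M\setminus L$; (iii) apply the Dehn-filling monotonicity of $lTV$ from Section \ref{sec:cuttingtori} to conclude.
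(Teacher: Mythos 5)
Your proposal is correct and follows essentially the same route as the paper: the paper cites \cite[Corollary 5.2]{DKY} to get $lTV$ of the figure-eight and Borromean rings complements equal to their hyperbolic volumes ($2v_3$ and $2v_8$, both $\geqslant 2v_3$), observes that $M$ is a (meridian) Dehn filling of $M\setminus L$, and concludes via the Dehn-filling monotonicity of $lTV$ (Corollary \ref{reduces}, which rests on Corollary \ref{dehnfilling}). The subtleties you flag are already handled in the paper: Theorem \ref{toruscutting} and Corollary \ref{dehnfilling} are proved for manifolds with toroidal boundary, and the cited result of \cite{DKY} supplies the exact growth rate as the volume.
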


Since, by \cite{DKY},  the invariants $TV_r(S^3\setminus L)$  of a link complement are expressed in terms of the
colored Jones polynomial of $L$, Corollary \ref{expgrowth} also provides new instances of colored Jones polynomial values with exponential growth. 
As far as we know this is the first instance where exponential growth of a quantum type invariant follows from a topological argument rather than brute force computations. 
The result
is consistent with the $TV_r$ volume  conjecture of Chen and Yang \cite{Chen-Yang}, which claims that for any hyperbolic 3-manifold of finite volume
we should have $lTV(M)=LTV(M)=\vol(M)$. See Section \ref{Exact} for more details.

Establishing exponential growth of the invariants $TV_r$ is also important for another intriguing  and wide-open conjecture  in quantum topology. This is  the AMU Conjecture due to Andersen, Masbaum  and Ueno \cite{AMU}.
In particular, Corollary \ref{expgrowth} has an essential application to this conjecture that  we  will explain  next.

For a compact  orientable 3-manifold of genus $g$ and $n$ boundary components, say
$\Sigma_{g, n}$, let $\mathrm{Mod}(\Sigma_{g, n})$ denote its  mapping class group.
The AMU conjecture asserts that  the $SU(2)$ and $SO(3)$ quantum
representations $\mathrm{Mod}(\Sigma_{g, n})$ should send pseudo-Anosov mapping classes to elements of infinite order (for large enough level). 
Despite good  progress on the AMU Conjecture for low genus surfaces \cite{ AMU, EgsJorgr, San17, San12}, the first examples that satisfy the conjecture in surfaces of genus at least two were recently given by
 March\'e and Santharoubane \cite{MarSan}.
 
In \cite{DK:AMU} the authors show that if we have $ lTV(M)>0$ for all hyperbolic 3-manifolds that fiber over the circle,
then the AMU Conjecture is true.
Corollary \ref{expgrowth} is then one of the key ingredients  used in \cite{DK:AMU}  to prove the following.

\begin{theorem}\label{AMU}  {\rm {({\cite{DK:AMU}})}}\label{general} 
Suppose that either $n=2$ and $g\geqslant 3$ or $g\geqslant n \geqslant 3.$
Then  there are infinitely many pseudo-Anosov mapping classes, up to conjugation and taking powers,  in
$\mathrm{Mod}(\Sigma_{g, n})$ that satisfy the AMU conjecture.
\end{theorem}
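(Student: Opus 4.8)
The plan is to combine the topological criterion for the AMU conjecture established in \cite{DK:AMU} with the exponential growth statement of Corollary \ref{expgrowth}. Recall the key principle of \cite{DK:AMU}: if $\phi \in \mathrm{Mod}(\Sigma_{g,n})$ is pseudo-Anosov and its mapping torus $M_\phi$ satisfies $lTV(M_\phi) > 0$, then the $SO(3)$ quantum representations send $\phi$ to elements of infinite order for all sufficiently large levels, i.e.\ $\phi$ satisfies the AMU conjecture. The mechanism is that $M_\phi$ carries a natural family of WRT-type vectors on which the quantum mapping class group action is implemented, so that the traces of $\rho_r(\phi^k)$ control, up to polynomial factors, the quantities $TV_r(M_{\phi^k})$; their growth in $k$ is governed by $lTV(M_\phi)$, and exponential growth is incompatible with $\rho_r(\phi)$ having bounded order. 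Granting this, it suffices to produce, for every pair $(g,n)$ in the stated range, infinitely many pseudo-Anosov classes in $\mathrm{Mod}(\Sigma_{g,n})$, pairwise non-conjugate even after passing to powers, whose mapping tori have strictly positive $lTV$.

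The way to get positivity of $lTV(M_\phi)$ for free is to arrange that $M_\phi$ is itself the complement of a link in the figure-eight knot complement $M$ or in the Borromean rings complement; Corollary \ref{expgrowth} then gives $lTV(M_\phi) \geqslant 2v_3 > 0$. So the task becomes purely topological: realize the surfaces $\Sigma_{g,n}$, with $n=2$ and $g \geqslant 3$ or with $g \geqslant n \geqslant 3$, as fibers of fibrations over $S^1$ of hyperbolic $3$-manifolds of the form $M \setminus L$. I would start from the known fibrations of $M$ --- the figure-eight complement fibers with fiber $\Sigma_{1,1}$, while the Borromean rings complement has $b_1 = 3$ and hence many fibered cones, with fibers of unbounded genus --- and then drill suitable curves (periodic orbits of the suspension flow, and curves meeting a fiber). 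Drilling enlarges $H^1$ and opens new fibered cones, and choosing fibered classes in these cones realizes fibers of larger genus and more punctures; the resulting $M\setminus L$ stays hyperbolic because it is a link complement in a hyperbolic manifold, or alternatively one controls hyperbolicity via Thurston's hyperbolic Dehn surgery theorem. By Thurston's fibered-face theorem the monodromy $\phi$ of each such fibration is pseudo-Anosov, since $M\setminus L$ is hyperbolic. One then reads off which pairs $(g,n)$ arise, and the constraint $n=2,\ g\geqslant 3$ or $g\geqslant n\geqslant 3$ is precisely the range reachable by this bookkeeping (it excludes the low-complexity surfaces handled elsewhere).

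For the infinitude on each such $\Sigma_{g,n}$: two pseudo-Anosov classes that become conjugate after taking powers have mapping tori sharing a common finite cyclic cover, hence rationally related hyperbolic volumes, so it is enough to exhibit for each admissible $(g,n)$ an infinite family of links $L \subset M$ with $M \setminus L$ fibering with fiber $\Sigma_{g,n}$ and with the volumes $\vol(M\setminus L)$ forming a rationally independent set. This I would arrange by iterating the drilling / re-choosing-fibered-class procedure while keeping the fiber type fixed and varying the monodromy --- for instance composing with powers of a Dehn twist carried by the fiber, after checking (with a Thurston--Fathi-type criterion) that pseudo-Anosov-ity persists and that the new mapping torus is again a link complement in $M$. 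I expect the main obstacle to be exactly this middle part: simultaneously (i) realizing the precise list of surfaces $\Sigma_{g,n}$ as fibers of link complements in the figure-eight or Borromean rings complement, (ii) certifying hyperbolicity of all the drilled manifolds, and (iii) guaranteeing infinitely many genuinely distinct monodromies per surface; the interplay of Thurston-norm combinatorics, fibered cones, and hyperbolic Dehn surgery is delicate and is what produces the genus/puncture restrictions in the statement. By contrast the reduction step is essentially formal once the TQFT dictionary between $TV_r$, the $SO(3)$ WRT invariants, and the quantum mapping class group representations (as in \cite{BHMV2}, \cite{DKY}) is in place.
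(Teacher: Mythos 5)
Your proposal correctly identifies the intended strategy, but be aware that this theorem is not proved in the present paper at all: it is quoted from \cite{DK:AMU}, and the paper's only contribution to it is Corollary \ref{expgrowth}, which serves as an input there. Your reduction --- a pseudo-Anosov $\varphi$ whose mapping torus satisfies $lTV(M_\varphi)>0$ satisfies the AMU conjecture, and positivity is obtained by realizing $M_\varphi$ as the complement of a link in the figure-eight knot complement or the Borromean rings complement so that Corollary \ref{expgrowth} applies --- is indeed the route taken in \cite{DK:AMU}. The gap is that the actual mathematical content of the theorem is exactly the part you leave as a sketch: producing, for every pair $(g,n)$ with $n=2,\ g\geqslant 3$ or $g\geqslant n\geqslant 3$, fibrations over $S^1$ with fiber $\Sigma_{g,n}$ of hyperbolic manifolds of the form $M\setminus L$ with $M$ one of these two complements. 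You gesture at drilling periodic orbits and choosing classes in new fibered cones, and then assert that the stated range of $(g,n)$ ``is precisely the range reachable by this bookkeeping''; that assertion \emph{is} the theorem, and none of the bookkeeping is carried out. In \cite{DK:AMU} this is done by an explicit construction of fibered links containing the figure-eight knot as a sublink (built from its fiber surface, with Stallings twists supplying infinitely many monodromies on a fixed fiber), and it is this construction that produces the restrictions on $(g,n)$.

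A second step would fail as stated: you propose to distinguish classes up to conjugation and powers by exhibiting families whose hyperbolic volumes are pairwise rationally independent. Certifying rational independence of hyperbolic volumes is out of reach --- one cannot even prove irrationality of a single such volume --- so this criterion cannot be implemented. Your observation that $\varphi^k$ conjugate to $\psi^m$ forces $k\,\vol(M_\varphi)=m\,\vol(M_\psi)$ is correct, but it should be combined with a family whose volumes are pairwise distinct and confined to an interval $(V_0,V]$ with $V<2V_0$ (for instance volumes of increasing Dehn fillings, equivalently Stallings twists, on a fixed cusped manifold, which increase to the cusped volume); then $k=m$ is forced and the volumes must coincide, a contradiction. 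As written, your proposal reproduces the formal TQFT reduction, which this paper and \cite{DK:AMU} already supply, but not the geometric constructions that constitute the proof of the statement.
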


As far as we know Theorem \ref{AMU} is the first result that provides infinitely many mapping classes,  up to conjugation and taking powers,  that satisfy the AMU conjecture for fixed surfaces of genus at least $2.$

The paper is organized as follows: In Section \ref{sec:3-manifolds}, we recall some  results about the simplicial volume and the geometric decomposition of 3-manifolds. In Section \ref{sec:quantumdef}, we define the Turaev-Viro invariants and explain their TQFT properties. In Section \ref{sec:LTVfinite} we provide a bound for quantum $6j$-symbols that are used to define to $TV_r$ invariants, in terms of values of the  Lobachevsky function. In Section \ref{sec:cuttingtori} we study the behavior of  $LTV$ under the operations of cutting or gluing along tori. In Section \ref{sec:TVSeifert}, we study the special case of Seifert manifolds. In Section \ref{sec:LTVbound} we finish the proof of Theorem \ref{LTVbound} and  we derive Corollaries \ref{reduces} and \ref{expgrowth} and some generalizations.
Finally, in Section \ref{Exact}, we provide some new examples where the growth rate of Turaev-Viro invariants exactly computes the simplicial volume.

\vskip 0.08in

\noindent{\bf Acknowledgement.} We thank Gregor Masbaum, Cliff Taubes and Tian Yang for their interest in this work and for helpful comments and  discussions. 


\section{Decompositions of $3$-manifolds}
\label{sec:3-manifolds}
\subsection {Gromov norm preliminaries}
\label{sec:simplicialvol}
In this section, we recall the definition of the Gromov norm (a.k.a. simplicial volume) of $3$-manifolds and some of its classical properties.
 Gromov defined simplicial volume of $n$-manifolds in \cite{gromov:volume}, here we restrict ourselves to orientable 3-manifolds only. For more details the reader is referred to \cite[Section 6.5]{thurston:notes}.

\begin{definition}\label{simplicialvol} { \rm \cite{gromov:volume, thurston:notes}
Let $M$ be a compact orientable $3$-manifold with empty or toroidal boundary. Consider the fundamental class $[M, \partial M]$ in the singular homology $H_3(M, \partial M, \RR)$.
For $z=\sum c_i \sigma_i \in Z_3(M,\partial M, \RR),$ a $3$-relative singular cycle, representing $[M, \partial M]$,
we define its norm to be the real number $||z||=\sum |c_i|$.
\begin{enumerate} 
\item If  $\partial M=\emptyset$,  then the simplicial volume of $M$ is 
$$||M||=\mathrm{inf}\lbrace ||z|| \ / \ [z] = [M] \rbrace.$$
\item If $\partial M\neq \emptyset$, the representative  $[z]=[M, \partial M]$
determines a representative $\partial z$ of $[\partial M]\in H_2(\partial M, \RR)$. Then, as shown in \cite[Section 6.5]{thurston:notes} the following limit exists, 
$$||M||=\underset{\varepsilon \rightarrow 0}{\lim}\mathrm{inf}\lbrace ||z|| \ / \ [z] = [M,\partial M] \ \textrm{and} \ ||\partial z||\leqslant \varepsilon \rbrace,$$
and is defined to be the simplicial volume of $M$.
\end{enumerate}}
\end{definition}
\vskip 0.1in

 For hyperbolic manifolds, the simplicial volume is proportional to the hyperbolic volume and it is nicely behaved with respect to some topological operations.
\begin{theorem}\label{simplicialvolproperties} {\rm (\cite{gromov:volume, thurston:notes})} \ The following are true:
\begin{enumerate}
\item $||M||$ is additive under disjoint union and connected sums of manifolds.
\item If $M$ has a self map of degree $d> 1$ then $||M||=0.$ In particular $||\Sigma \times S^1||=0,$ for any compact oriented surface $\Sigma.$
\item If $T$  is an embedded torus in  $M$ and $M'$ is obtained from $M$ by cutting along $T$ then $$||M|| \leqslant ||M'||.$$
Moreover, the inequality is an equality if $T$ is incompressible in $M.$
\item If $M$ is obtained from $M'$ by Dehn-filling of a torus boundary component in $M',$ then
$$||M||\leqslant ||M'||.$$
\item If $M$ has a complete hyperbolic structure with finite volume then 
$$\vol (M)=v_3 ||M||.$$
\end{enumerate}
\end{theorem}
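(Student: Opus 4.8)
The plan is to derive parts (1)--(4) from two soft and standard properties of simplicial volume, and to prove part (5) by the Gromov--Thurston straightening/smearing argument; all of this is classical (\cite{gromov:volume, thurston:notes}), and in the body of the paper we just quote it, but here is the outline. The two soft tools are: (i) \emph{functoriality}: for any continuous $f\co X\to Y$ and any real singular chain $z$ one has $\|f_\#(z)\|\le\|z\|$, since $f_\#$ merely relabels simplices (hence $\|f_*\alpha\|\le\|\alpha\|$ on homology); and (ii) \emph{Gromov's additivity, or ``amenable gluing'', principle}: if $M$ is obtained by gluing $M_1$ and $M_2$, or $M'$ to itself, along a $\pi_1$-injective two-sided codimension-one submanifold with amenable fundamental group, then simplicial volume is additive. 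A $2$-sphere has trivial $\pi_1$ and a torus has $\pi_1\cong\Z^2$, both amenable, so (ii) applies to connected sums and to incompressible tori.

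Part (1): additivity under disjoint union is immediate, a relative fundamental cycle of $M_1\sqcup M_2$ being a disjoint pair of relative fundamental cycles. For connected sums, the bound $\|M_1\#M_2\|\le\|M_1\|+\|M_2\|$ follows by taking near-efficient fundamental cycles of the $M_i$, arranging the separating $S^2$ to meet them in a $2$-cycle of arbitrarily small norm, and gluing; the reverse inequality is (ii) along that $S^2$. Part (2): if $f\co (M,\partial M)\to(M,\partial M)$ has degree $d>1$ then $f_*[M,\partial M]=d\,[M,\partial M]$, so by (i), $|d|\,\|M\|=\|f_*[M,\partial M]\|\le\|M\|$; a compact manifold has finite simplicial volume (a finite triangulation gives a finite cycle, whose boundary can then be trimmed using $\|\partial M\|=0$), so $\|M\|=0$. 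In particular $\Sigma\times S^1$ admits the self-map $\mathrm{id}_\Sigma\times(z\mapsto z^2)$ of degree $2$, whence $\|\Sigma\times S^1\|=0$.

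Part (3): write $M'=M\cut T$. The quotient map $M'\to M$ that reglues the two copies of $T$ carries a relative fundamental cycle of $M'$ to one of $M$ --- after a standard small modification near $T$ absorbing the arbitrarily small boundary created there --- without increasing the norm, so $\|M\|\le\|M'\|$. If $T$ is incompressible it is $\pi_1$-injective, and (ii) along $T$ upgrades this to equality. Part (4) then follows from (3) and (1): if $M$ is obtained from $M'$ by Dehn filling a boundary torus, let $V$ be the filling solid torus; cutting $M$ along the embedded torus $\partial V$ yields $M'\sqcup V$, so $\|M\|\le\|M'\sqcup V\|=\|M'\|+\|V\|=\|M'\|$, the last equality because $V$ deformation retracts onto a circle (equivalently has a degree-$2$ self-map), so $\|V\|=0$.

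Part (5) is the substantive one. The inequality $\vol(M)\le v_3\|M\|$ is proved by \emph{straightening}: given a relative fundamental cycle $\sum c_i\sigma_i$, lift each $\sigma_i$ to $\HH^3$ and replace it by the geodesic simplex on the same vertices (ideal simplices with vertices on the sphere at infinity, in the cusped case); the straightened chain represents the same relative class, pairs with the volume cohomology class to give $\vol(M)$ up to a boundary term that stays small, and each straight simplex has volume $\le v_3$, so $\vol(M)\le v_3\sum|c_i|$ and one takes the infimum. The reverse inequality $\vol(M)\ge v_3\|M\|$ is the heart of the matter: one \emph{smears} a single almost-regular ideal simplex over $M$ using the finite invariant measure on the frame bundle of $M$ (finite because $\vol(M)<\infty$) and ergodicity of the frame flow, and approximates the resulting measure cycle by genuine fundamental cycles of norm arbitrarily close to $\vol(M)/v_3$; this uses crucially that the regular ideal simplex is the \emph{unique} volume maximizer, which in dimension $3$ is elementary from the Lobachevsky function. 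I expect this last step --- the lower bound in (5), and especially its passage from the closed to the finite-volume cusped case, where one additionally needs a truncation argument near the cusps to keep the cycles finite and genuinely relative --- to be the main obstacle; parts (1)--(4) are formal consequences of functoriality and the amenable gluing theorem.
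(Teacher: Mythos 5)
This theorem is quoted in the paper as classical background with no proof given beyond the citations to \cite{gromov:volume, thurston:notes}, and your outline reproduces exactly the standard arguments of those sources (functoriality and amenable/$\pi_1$-injective gluing for (1)--(4), straightening and smearing for (5)), so it is essentially the same approach and is correct as a sketch. The only places where your sketch compresses real work are the ones you already flag: the ``small modification near $T$'' in (3) secretly uses the uniform boundary condition for the amenable torus, and the cusped case of (5) needs the truncation argument — both handled in Thurston's notes.
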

\subsection{Geometric decomposition}
\label{sec:JSJ}
We recall that any compact oriented $3$-manifold is a connected sum of irreducible manifolds and copies of $S^2\times S^1.$ 
Furthermore, by Jaco-Shalen-Johannson (JSJ) theory \cite{jaco-shalen, johannson}, any irreducible $3$-manifold $M$ can be cut along a canonical collection of incompressible tori $\mathcal{T}=\lbrace T_1 ,\ldots, T_n\rbrace,$ so that the components of $M\setminus \left( T_1 \cup \ldots \cup T_n \right)$ are irreducible atoroidal $3$-manifolds.

Thurston's Geometrization Conjecture \cite{thurston:survey}, proved by Perelman, allows one to identify the pieces of the JSJ decomposition.
A consequence of Perelman's work is the following theorem which is the solution to Thurston's Geometrization Conjecture.

\begin{theorem}\label{GT}
{\rm {(Geometrization Theorem, \cite{morgan-tian2})}} \ Any irreducible, compact, orientable $3$-manifold $M$ contains a unique (up to isotopy) collection of disjointly embedded incompressible tori ${\mathcal T}=\{T_1,\ldots, T_n\}$ such that all the connected components of 
$M\setminus {\mathcal T}$ are  either Seifert fibered manifolds  or hyperbolic.
\end{theorem}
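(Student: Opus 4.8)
The plan is to reduce the statement to its geometric core using the Jaco--Shalen--Johannson theory already recalled above. By JSJ theory, the irreducible manifold $M$ contains a collection $\mathcal{T}=\{T_1,\ldots,T_n\}$ of incompressible tori, unique up to isotopy, such that each component of $M\setminus\mathcal{T}$ is irreducible and \emph{atoroidal} (every incompressible torus is boundary-parallel). Thus it suffices to prove that an irreducible, compact, orientable, atoroidal piece $N$ with empty or toroidal boundary is either Seifert fibered or admits a complete finite-volume hyperbolic structure; the torus family for $M$ is then $\mathcal{T}$ itself. Uniqueness follows from the JSJ uniqueness statement together with the fact that a hyperbolic piece and a Seifert piece on the two sides of a $T_i$ can never be amalgamated into a single geometric manifold.

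Next I would treat the Haken case by Thurston's Hyperbolization Theorem: if $N$ contains a two-sided incompressible surface and is atoroidal, then $N$ is either a small Seifert fibered space or hyperbolic. The proof runs along a hierarchy for $N$, building the hyperbolic structure inductively by a gluing argument that uses the Double Limit Theorem and the Bounded Image Theorem to prevent the successive quasi-Fuchsian deformations from degenerating.

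For the general, possibly non-Haken, case the approach is Hamilton--Perelman Ricci flow with surgery. One starts from an arbitrary metric on $N$ and runs the Ricci flow; Perelman's no-local-collapsing theorem and canonical-neighborhood theorem allow surgery on $\varepsilon$-necks in finite time, so that the flow persists for all time on the pieces of the prime decomposition. For large time the thick-thin decomposition applies: on the thick part the rescaled metric subconverges to a complete finite-volume hyperbolic metric, whose cusp tori are incompressible (verified via minimal-surface and stability arguments), while the thin part collapses with locally bounded curvature and, by the geometrization of collapsed $3$-manifolds (Cheeger--Gromov--Fukaya collapsing theory in the bounded-curvature $3$-dimensional setting), is a graph manifold; in the irreducible atoroidal situation this forces every such piece to be Seifert fibered. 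Assembling the hyperbolic and Seifert pieces along the necessarily incompressible collapsing tori, and matching them with the JSJ family $\mathcal{T}$, yields the theorem.

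The main obstacle is precisely this last step: showing that the long-time Ricci flow with surgery produces exactly a thick hyperbolic part and a thin graph-manifold part. It requires Perelman's full toolkit --- monotonicity of the $\mathcal{W}$-entropy and of the reduced volume, the classification of $\kappa$-solutions, and a priori curvature estimates that survive surgery --- together with a delicate collapsing-theory argument to produce the local Seifert structures on the thin part and patch them consistently. By comparison, extracting the JSJ tori and checking compatibility of the two torus decompositions is routine $3$-manifold topology once this geometric input is granted.
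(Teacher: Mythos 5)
The paper does not prove this statement at all: it is quoted as the Geometrization Theorem, i.e.\ Thurston's conjecture as established by Perelman, with a citation to Morgan--Tian, and the JSJ existence/uniqueness theory is likewise only recalled from Jaco--Shalen and Johannson. Your proposal is a faithful roadmap of exactly that cited literature (JSJ tori, Thurston's hyperbolization of atoroidal Haken pieces via the Double Limit and Bounded Image theorems, and Ricci flow with surgery plus collapsing theory for the non-Haken case), so it is the same approach as the paper in the only meaningful sense --- but be clear that it is a citation-level sketch rather than a proof: every load-bearing step is one of the invoked deep theorems, which is unavoidable and acceptable here. One small caveat: uniqueness of the torus family as literally stated needs a minimality (or canonical JSJ) hypothesis --- one can always enlarge a valid family by a parallel or boundary-parallel incompressible torus, creating an extra $T^2\times[0,1]$ piece, which is Seifert fibered --- so your appeal to ``JSJ uniqueness plus non-amalgamation'' should be phrased for the canonical minimal collection, and one should note that the geometric collection can in fact be a proper subfamily of the JSJ family when adjacent Seifert pieces amalgamate.
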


\subsection{Efficient bounds on triangulations of $3$-manifolds}
\label{sec:triangulations}
We conclude this section by recalling a result  about triangulations of $3$-manifolds.
 As the Turaev-Viro invariants of a manifold $M$ are defined using state sums whose terms are products of quantum $6j$-symbols over a triangulation of $M,$ we wish to use triangulations with few tetrahedra to bound the Turaev-Viro invariants.
 For hyperbolic 3-manifolds, one way to achieve this is to consider  triangulations  not of the manifold $M$ itself, but rather of $M$ minus some geodesics.
  We will use the following theorem, due to W. Thurston, originally used in the proof of the so called Jorgensen-Thurston Theorem  \cite[Theorem 5.11.2]{thurston:notes}.
  
  \begin{theorem}\label{Thurstontriangulation}{\rm (Thurston)} \ There exists a universal constant $C_2,$ such that for any complete hyperbolic $3$-manifold $M$ of finite volume,  there exists a link $L$ in $M$ and a partially ideal triangulation of $M\setminus L$ with less than $C_2 ||M||$ tetrahedra.
\end{theorem}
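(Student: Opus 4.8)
The plan, following Thurston, is to combine the thick--thin decomposition of $M$ with a volume--packing argument and a Delaunay-type cell decomposition. Fix a constant $\epsilon$ below one third of the three-dimensional Margulis constant and write $M^{\geq\epsilon}$ for the $\epsilon$-thick part of $M$. By the Margulis lemma, $M^{<\epsilon}$ is a disjoint union of cusp neighbourhoods and of solid-torus neighbourhoods of the closed geodesics of length less than $\epsilon$; a standard computation --- for a Margulis tube around a geodesic of length $\ell$ the volume is comparable to $1/\ell$ --- shows that each such component has volume bounded below by a universal constant, so that the number of components of $M^{<\epsilon}$ is at most $K_0\vol(M)$. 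Let $L\subset M$ be the union of the core geodesics of the tube components of $M^{<\epsilon}$. Then, up to homeomorphism, $M\setminus L$ is obtained from the compact core $M^{\geq\epsilon}$ by attaching one copy of $T^2\times[0,\infty)$ along each of its boundary tori --- one for each cusp of $M$ and one for each component of $L$ --- so that $M\setminus L$ has at most $K_0\vol(M)=K_0 v_3||M||$ ends, all of them toroidal.

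First I would triangulate the compact core of the thick part efficiently. The manifold $M^{\geq\epsilon}$ is compact with toroidal boundary, has volume at most $\vol(M)$, and --- being the thick part, with $\epsilon$ below the Margulis constant --- has injectivity radius bounded below by a universal constant at every point. Choose a maximal $\epsilon$-separated subset $S=\{x_1,\dots,x_N\}$ of $M^{\geq\epsilon}$. The balls $B(x_i,\epsilon/2)$ are embedded and pairwise disjoint, each of volume at least the volume $v(\epsilon)>0$ of a ball of radius $\epsilon/2$ in $\HH^3$, so $N\leq\vol(M)/v(\epsilon)$; and by maximality the balls $B(x_i,\epsilon)$ cover $M^{\geq\epsilon}$. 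Since $S$ is both $\epsilon$-dense and $\epsilon$-separated, a packing estimate in $\HH^3$ bounds by a universal constant the number of indices $i$ with $x_i$ within distance $\epsilon$ of a given point. It follows that $M^{\geq\epsilon}$ decomposes into at most $K_1N$ convex cells, each with at most $K_2$ faces --- for instance the Delaunay cells of the net $S$, or the cells dual to its Voronoi partition --- and subdividing these into tetrahedra yields a triangulation of $M^{\geq\epsilon}$ with at most $K_3N\leq K_4\vol(M)$ tetrahedra and boundedly many triangles on each boundary torus. (One may instead use the nerve of the cover $\{B(x_i,\epsilon)\}$, a bounded-valence simplicial complex on $N$ vertices homotopy equivalent to $M^{\geq\epsilon}$, and thicken it to a triangulation.)

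It then remains to attach the product ends. Each piece $T^2\times[0,\infty)$ meets $M^{\geq\epsilon}$ along a torus already carrying $O(1)$ triangles, and coning it to a single ideal vertex through the product structure costs only $O(1)$ tetrahedra (one per triangle of the cross-section). Performing this for each of the at most $K_0 v_3||M||$ ends and gluing the result to the triangulation of the thick part produces a partially ideal triangulation of $M\setminus L$ --- with material vertices at the net points $x_i$ and one ideal vertex per end --- with a total of at most $C_2||M||$ tetrahedra, after renaming the constant.

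The hard part is the bookkeeping along the interface between the thick and thin parts: one must take $\epsilon$ small enough that the components of $M^{<\epsilon}$ are convex tube and horoball neighbourhoods, so that the cell decomposition of $M^{\geq\epsilon}$ remains controlled up to its boundary tori and those tori inherit a bounded triangulation, while at the same time verifying that the Delaunay (or Voronoi-dual) cells have genuinely bounded combinatorial complexity --- which is exactly the point at which the net being simultaneously $\epsilon$-dense and $\epsilon$-separated is needed. A secondary issue is that passing from $M$ to $M\setminus L$ is both necessary --- a thin, deep Margulis tube cannot be triangulated with a number of tetrahedra controlled by its volume, so one cannot hope to triangulate $M$ itself efficiently --- and harmless, since it only creates $O(||M||)$ new toroidal ends and leaves a manifold still carrying the desired partially ideal triangulation.
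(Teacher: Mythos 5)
Your sketch is correct and is essentially the argument the paper relies on: the paper does not reprove the statement but cites Thurston's thick--thin proof of the J{\o}rgensen--Thurston theorem, whose strategy (drill the cores of the Margulis tubes so the thin parts become toroidal ends, take a maximal $\epsilon$-separated net in the thick part with cardinality bounded by $\vol(M)/V(\epsilon/4)$ via ball packing, build a bounded-valence Delaunay/nerve-type triangulation from it, and cone each end to an ideal vertex) is exactly what you describe, and is even reflected in the paper's explicit constant $C_2=\binom{k}{3}v_3/(4V(\varepsilon/4))$. The only nitpick is your justification for counting the thin components: an $\epsilon$-thin tube around a geodesic of length close to the cutoff can itself have small volume, so one should bound the count via the disjoint Margulis tubes (or cusp neighbourhoods) containing the thin components, which do have a universal volume lower bound.
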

The proof of Theorem \ref{Thurstontriangulation} comes from the thick-thin decomposition of hyperbolic manifolds.
Moreover, the constant $C_2$ in this theorem can be explicitly estimated: 

 It follows from the analysis in the proof of  \cite[Theorem 5.11.2]{thurston:notes} that given ${\displaystyle{\varepsilon\leqslant \frac{c}{2}}}$, where  $c$ is  the Margulis constant, 
one can choose 
$$C_2=\frac{\binom{k}{3}v_3}{4 V(\varepsilon/4)}\ \ {\rm  where} \ \  k=\lfloor \frac{V(5\varepsilon/4)}{V(\varepsilon/4)} \rfloor-1,$$ 
 where $V(r)$ denotes the hyperbolic volume of a ball of radius $r.$
The volume $V(r)$ be can be computed by the formula $V(r)=\pi \left( \sinh(2r)-2r\right)$ (see, for example,  \cite[Section 3.1]{Horvath}).
Moreover, the Margulis constant has been shown to be at least at least $0.104$ \cite{meyerhoff}.
Using ${\displaystyle {\varepsilon=\frac{0.103}{2}}}$ we get
 that in Theorem \ref{Thurstontriangulation}, we can use $C_2 =1.101\times 10^9.$


\section{Turaev-Viro invariants and  Witten-Reshetikhin-Turaev TQFT}
\label{sec:quantumdef}
In this section we summarize the definitions and the main properties of the quantum invariants we will use in this paper. First
we recall the definition of the Turaev-Viro invariants as state sums on triangulations of 3-manifolds. Then in subsection \ref{sec:TQFTdef} we summarize the properties
of the $SO(3)$-Witten-Reshetikhin-Turaev TQFT \cite{BHMV2, Roberts,  Turaevbook} that we will need in this paper.

\subsection{State sums for the Turaev-Viro invariants}
\label{sec:TVdef}

Let $r\geqslant 3$  be an odd integer and let $q=e^{\frac{2i\pi}{r}}.$
Define the quantum integer $\lbrace n \rbrace $ by
 $$\lbrace n \rbrace=q^n-q^{-n}=2\sin(\frac{2n\pi}{r})=2\sin(\frac{2\pi}{r})[n] , \ \ {\rm {where}} \ \   [n]=\frac{q^n-q^{-n}}{q-q^{-1}}=\frac{2\sin(\frac{2 n\pi}{r})}{2\sin(\frac{2\pi}{r})},$$

and define the quantum
 factorial by $\lbrace n \rbrace!=\underset{i=1}{\overset{n}{\prod}} \lbrace i \rbrace.$

 Consider the set  $I_r=\lbrace 0 ,2,4 \ldots, r-3 \rbrace$ of all non-negative even integers less than $r-2.$
A triple 
$(a_i, a_j, a_k)$  of elements  in $I_r,$ is called  {\emph admissible}  if $a_i+a_j+a_k\leqslant 2(r-2)$ and we have triangle inequalities  $a_i\leqslant a_j+a_k,$ $a_j\leqslant a_i+a_k,$ and $a_k\leqslant a_i+a_j.$
 For an admissible triple $(a_i,a_j,a_k),$ we define $\Delta(a_i,a_j,a_k)$  by 
$$\Delta(a_i,a_j,a_k)=\zeta_r ^{\frac{1}{2}}\left( \frac{\lbrace \frac{a_i+a_j-a_k}{2} \rbrace !\lbrace \frac{a_j+a_k-a_i}{2} \rbrace !\lbrace \frac{a_i+a_k-a_j}{2}\rbrace !}{\lbrace \frac{a_i+a_j+a_k}{2}+1 \rbrace !}\right)^{\frac{1}{2}}$$
where $\zeta_r={\displaystyle {2\sin(\frac{2\pi}{r})}}$.
A $6$-tuple $(a_1, a_2, a_3, a_4, a_5, a_6)\in I_r^6$ is called admissible if each of the triples
\begin{equation}
F_1=(a_1,a_2,a_3), \ \  F_2=(a_2,a_4,a_6), \ \ F_3=(a_1, a_5,a_6) \ \ {\rm and} \ \ F_4=(a_3,a_4,a_5), 
\label{notation}
\end{equation}
is admissible.
Given an admissible $6$-tuple   $(a_1, a_2, a_3, a_4, a_5, a_6)$,  we define  the quantum $6j$-symbol at the root $q$ by the formula

\begin{multline}\begin{vmatrix}
a_1 & a_2 & a_3 \\ a_4 & a_5 & a_6
\end{vmatrix}=
(\zeta_r)^{-1} (\sqrt{-1})^{\lambda} {\prod_{i=1}^4 \Delta(F_i)}   \sum_{z=\max \{T_1, T_2, T_3, T_4\}}^{\min\{ Q_1,Q_2,Q_3\}}\frac{(-1)^z\lbrace  z+1 \rbrace !}{{{\prod_{j=1}^4\lbrace z-T_j\rbrace !\prod_{k=1}^3\lbrace Q_k-z\rbrace !}}}
\label{6j}
\end{multline}
 where $\displaystyle{\lambda=\sum_{i=1}^6a_i,}$ and
 
 $$T_1=\frac{a_1+a_2+a_3}{2}, \ \ T_2=\frac{a_1+a_5+a_6}{2}, \ \  T_3=\frac{a_2+a_4+a_6}{2} \ \ {\rm  and} \ \  T_4=\frac{a_3+a_4+a_5}{2},$$
 
 $$Q_1=\frac{a_1+a_2+a_4+a_5}{2}, \ \ Q_2=\frac{a_1+a_3+a_4+a_6}{2} \ \ {\rm and} \ \  Q_3=\frac{a_2+a_3+a_5+a_6}{2}.$$

\begin{definition} { \rm An admissible coloring of a tetrahedron $\Delta$ is an assignment of an admissible $6$-tuple  $(a_1, a_2, a_3, a_4, a_5, a_6)$ of elements of $I_r$ to the edges of $\Delta$ so that the three numbers assigned to
 the edges of each face form an admissible triple. In this setting, the quantities $T_i$ and $Q_i$ defined above correspond  to the sums of colorings over faces of the tetrahedron, and   the sums of colorings of  edges of normal quadrilaterals in $\Delta$.}
 \end{definition}
 Given a compact orientable 3-manifold $M$ consider a triangulation $\tau$ of $M$. If $\partial M\neq \emptyset$  we will allow $\tau$ to be  a (partially) ideal triangulation,
 where some of the vertices of the tetrahedra are truncated and the truncated faces triangulate $\partial M$. Given a partially ideal triangulation $\tau$ the set $V$ of interior vertices of $\tau$ is the set of vertices of $\tau$ which do not lie on $\partial M.$ 
 Also we write $E$ for the set of interior edges (thus excluding edges coming from the truncation of vertices).
  A \emph{coloring at level $r$} of the triangulated $3$-manifold $(M,\mathcal \tau)$ is an assignment of elements of $I_r$ to the edges of $\mathcal \tau$ and is \emph{admissible} if the $6$-tuple assigned to the edges of each tetrahedron of $\tau$ is admissible. Let $c$ be an admissible coloring of $(M,\tau)$ at level $r.$ Given a  coloring  $c$ and an edge $e \in E$
  let  $|e|_c=(-1)^{c(e)}[c(e)+1]$. Also  for $\Delta$ a tetrahedron in $\tau$ let $|\Delta|_c$ be the quantum $6j$-symbol  corresponding to the admissible $6$-tuple assigned to $\Delta$ by $c$.
    Finally, let $A_r(\tau)$ denote the set of $r$-admissible colorings of $\tau$  and let
   $\displaystyle{\eta_r=\frac{2\sin(\frac{2\pi}{r})}{\sqrt{r}}}$. 
   
   We are now ready to define the  Turaev-Viro invariants as a state-sum over $A_r(\tau)$:
   
\begin{theorem}{\rm (\cite{KauLins, TuraevViro})}\label{TVdef} Let $M$ be a compact orientable  manifold closed or with boundary.  Let $b_2$ denote the second ${\Z}_2$-Betti number of $M$
and 
$b_0$   is the number of closed  connected components in $M$.
Then the state sum  
 \begin{equation}
 TV_r(M)=2^{b_2-b_0} \ \eta_r^{2|V|}\sum_{c\in A_r(\tau)}\prod_{e\in E}|e|_c\prod_{\Delta\in \tau}|\Delta|_c,
 \label{sumTV}
 \end{equation}
is independent of the partially ideal triangulation $\tau$ of $M,$ and thus defines a topological invariant of $M.$

\end{theorem}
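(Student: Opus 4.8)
The plan is to reduce triangulation independence to Pachner's theorem together with the standard recoupling identities for the quantum $6j$-symbols, exactly as in the original arguments of \cite{TuraevViro, KauLins}. First I would note that the prefactor $2^{b_2-b_0}$ plays no role in the invariance question: $b_2$ and $b_0$ depend only on the homeomorphism type of $M$ and hence take the same value on every triangulation of $M$, so it is enough to prove that the bare state sum
$$Z_r(M,\tau)\ :=\ \eta_r^{2|V|}\sum_{c\in A_r(\tau)}\ \prod_{e\in E}|e|_c\ \prod_{\Delta\in\tau}|\Delta|_c$$
is independent of the (possibly partially ideal) triangulation $\tau$. Since $\tau$ and $\tau'$ triangulate the same manifold, Pachner's theorem (in the closed case) says they are connected by a finite sequence of the $2\leftrightarrow 3$ and $1\leftrightarrow 4$ bistellar moves; when $\partial M\neq\emptyset$ one uses the analogous statement for partially ideal triangulations, where besides the interior $2\leftrightarrow 3$ move one has moves supported near a truncated face which induce the two-dimensional Pachner moves on the triangulation of $\partial M$. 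Thus it suffices to see that $Z_r$ is unchanged under each elementary move.

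The algebraic input is a short list of identities satisfied by the symbols of \eqref{6j} at $q=e^{2\pi i/r}$ with the truncated colour set $I_r$, all part of Temperley--Lieb recoupling theory at this root of unity (see \cite{KauLins, BHMV2}): that $I_r$ is admissibility-closed, so that every $6j$-symbol that occurs is well defined --- no quantum factorial $\{n\}!$ in a denominator of \eqref{6j} vanishes in the ranges forced by admissibility --- and every sum below stays within admissible configurations; the orthogonality (unitarity) relation for $6j$-symbols; the Biedenharn--Elliott (pentagon) identity, which rewrites a product of two $6j$-symbols as a sum over one colour of a product of three; and the ``bubble''/theta evaluation, whose global form is the normalization
$$\eta_r^{-2}\ =\ \frac{r}{4\sin^2(2\pi/r)}\ =\ \sum_{i\in I_r}[i+1]^2,$$
that is, $\eta_r^{2}$ is the reciprocal of the global dimension of the theory.

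Granting these, the verification is the classical one. Under a $2\leftrightarrow 3$ move the interior-vertex count $|V|$ is unchanged, so $\eta_r^{2|V|}$ is unaffected, and the local change replaces the contribution of two tetrahedra glued along a face by a sum, over the colour of the single new interior edge, of $|e|_c$ times the contribution of three tetrahedra sharing that edge --- precisely the Biedenharn--Elliott identity. Under a $1\leftrightarrow 4$ move one interior vertex is created, contributing a factor $\eta_r^{2}$, and one tetrahedron is replaced by four together with four new interior edges; summing over the four new edge colours and using Biedenharn--Elliott and orthogonality to collapse the sums, then the normalization identity to absorb the factor $\eta_r^{2}$, one recovers the original symbol $|\Delta|_c$. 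The boundary moves introduce no interior vertices or edges, so $\eta_r^{2|V|}$ and the product over $E$ are untouched and invariance reduces again to a two-dimensional instance of the orthogonality relation for the triangulation of $\partial M$ (alternatively one subdivides the truncated tetrahedra into an honest relative triangulation of $(M,\partial M)$ and quotes the relative Pachner theorem). Reassembling, $Z_r(M,\tau)$ depends only on $M$, and multiplying by the topological constant $2^{b_2-b_0}$ gives the theorem. (A logically different route, special to this $SO(3)$ setting, is to prove directly the identification of $TV_r(M)$ with $2^{b_2-b_0}$ times the squared modulus of the $SO(3)$ Witten--Reshetikhin--Turaev invariant --- the Roberts / Benedetti--Petronio type formula later invoked in the paper --- which reduces invariance to Kirby-move invariance of the surgery-defined quantum invariants.)

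I expect the real work to be bookkeeping rather than any individual identity: checking that $I_r=\{0,2,\dots,r-3\}$ is closed under the operations appearing in the recoupling identities (so that, for instance, no denominator factorial in \eqref{6j} vanishes in the summation ranges forced by admissibility), and tracking the normalization constants $\zeta_r$, $\eta_r$ and the count $|V|$ carefully through the $1\leftrightarrow 4$ and boundary moves and through the passage between partially ideal and honest triangulations. The combinatorial-topology ingredient --- Pachner's theorem and its relative and ideal-triangulation refinements --- is standard and can simply be quoted.
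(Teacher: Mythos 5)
Your sketch is essentially the standard invariance argument from the sources the paper cites: the paper does not reprove Theorem \ref{TVdef}, but quotes \cite{TuraevViro, KauLins} (with \cite[Theorem 2.9]{DKY} handling the even-color $SO(3)$ variant and the factor $2^{b_2-b_0}$), and your Pachner-move/recoupling route --- closure of $I_r$ under admissibility at odd $r$, Biedenharn--Elliott, orthogonality, and the normalization $\eta_r^{-2}=\sum_{i\in I_r}[i+1]^2=\frac{r}{4\sin^2(2\pi/r)}$ --- is exactly the proof those references give, including the relative/ideal-triangulation bookkeeping for the boundary case. The one point worth flagging is that the prefactor $2^{b_2-b_0}$ is not merely a topological constant tacked on for invariance: it is the precise discrepancy between the even-color state sum and the original Turaev--Viro/Kauffman--Lins invariant, which is the content of \cite[Theorem 2.9]{DKY}; for triangulation-independence alone, however, your treatment of it as a constant is correct.
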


Note that while the definition above differs slightly from   \cite[Definition 7]{KauLins} by the use of even colors only,  by \cite[Theorem 2.9]{DKY} the two definitions are essentially the same; they only differ by the factor of $2^{b_2-b_0}.$ 
 The restriction of the coloring set to only even integers is reminiscent to
 the $SO(3)$ quantum invariant theory and it facilitates the study of the Turaev-Viro invariants, for odd levels, via the $SO(3)$-TQFT theory of
 \cite{BHMV2}.

\subsection{Witten-Reshetikhin-Turaev invariants and TQFT}
\label{sec:TQFTdef}
The definition of the Turaev-Viro invariants given above will be useful for us to show that the upper limit $LTV(M)$ is well defined (i.e. it is finite). However, in order to understand
the topological properties of $LTV(M)$ (i.e. its behavior under prime and toroidal decompositions of 3-manifolds) it will be convenient for us to view Turaev-Viro invariants through their relation
to the Witten-Reshetikhin-Turaev invariants $RT_r(M)$  (\cite{ReTu}), and the Topological Quantum Field Theories (TQFTs) they are part of. 

The Witten-Reshetikhin-Turaev TQFTs are functors from the category of cobordisms in dimension $2+1$ to the category of finite dimensional vector spaces; they associate a finite dimensional $\mathbb{C}$-vector space $RT_r(\Sigma)$ to each compact oriented surface $\Sigma$. Moreover, their values on closed $3$-manifolds $M$ are the Witten-Reshetikhin-Turaev invariants $RT_r(M)$ which are $\mathbb{C}$-valued and are related to surgery presentations of $3$-manifolds and colored Jones polynomials.
Also, for $M$ with boundary $\partial M=\Sigma,$ $RT_r(M) \in RT_r(\Sigma)$ is a vector.
\\ We will introduce these TQFTs in the skein-theoretic framework of Blanchet, Habegger, Masbaum and Vogel \cite{BHMV2}. As we restrict to level $r$ odd, the resulting TQFTs are the so called $SO(3)$-TQFTs. Below we will sketch only the properties of these TQFTs we need, referring the reader to \cite{BHMV2} for a precise definition and more details.

 To fix some notations, we recall that when $V$ is a $\mathbb{C}$-vector space, $\overline{V}$ denotes the $\mathbb{C}$-vector space that is $V$ as an abelian group and whose scalar multiplication is $\alpha \cdot v= \overline{\alpha}v$. When $V$ is a $\mathbb{C}$-vector space, an Hermitian form $\langle \cdot ,\cdot \rangle$ on $V$ is a map
$$\langle \cdot ,\cdot \rangle : V \otimes V \rightarrow \mathbb{C},$$
that satisfies $\langle \alpha v+w, v'\rangle=\alpha \langle v,v'\rangle +\langle w,v' \rangle$ and $\langle w,v \rangle=\overline{\langle v,w\rangle}$. 
Note that an Hermitian form can be considered a bilinear form over $V \otimes \overline{V}$.

\begin{remark} {\rm We note that the invariants $RT_r(M)$ in \cite{BHMV2} are only well-defined up to a $2r$-th root of unity, this ambiguity being called the anomaly of the TQFT. Resolving the anomaly requires considering $3$-manifolds $M$ with an additional structure called a $p_1$-structure, see \cite{BHMV2} for details. Since in this article we will only be interested in the moduli of the invariants  $RT_r(M)$, we will neglect the anomaly. We warn the reader, however,  that the rules for computing $RT_r$ in Theorem \ref{TQFT} below have to be understood to hold up to a root of unity.}
\end{remark}

We summarize the main properties of the $\mathrm{SO}(3)$-TQFT defined in \cite{BHMV2} in the following theorem:
\begin{theorem}\label{TQFT} {\rm (\cite[Theorem 1.4]{BHMV2})}
Let $r$ be an odd integer and $A$ be a primitive $2r$-th root of unity. Then there exists a TQFT functor $RT_r$ in dimension 2+1 satisfying:
\begin{enumerate}
\item For any oriented compact closed $3$-manifold $M,$ $RT_r(M)\in \mathbb{C}$ is a topological invariant. Moreover if $\overline{M}$ is the manifold $M$ with the opposite orientation, then $RT_r(\overline{M})=\overline{RT_r(M)}$.
\item We have $RT_r(S^2\times S^1)=1$ and $RT_r(S^3)=\eta_r={\displaystyle{ \frac{2\sin(\frac{2\pi}{r})}{\sqrt{r}}}}$.
\item The invariants $RT_r$ are multiplicative under disjoint union of $3$-manifolds, and for connected sums we have
 $$RT_r(M\# M')=\eta_r^{-1}RT_r(M)RT_r(M').$$ 
\item For any closed compact oriented surface $\Sigma,$ $RT_r(\Sigma)=V_r(\Sigma)$ is a finite dimensional $\mathbb{C}$-vector space and for disjoint unions we have natural isomorphisms $$V_r(\Sigma_1 \coprod \Sigma_2) \simeq V_r(\Sigma_1)\otimes V_r(\Sigma_2).$$
Moreover, $V_r(\emptyset)=\mathbb{C}$ and for any oriented surface $V_r(\overline{\Sigma})$ is the $\mathbb{C}$-vector space $\overline{V_r(\Sigma)}$.

\item To every compact, oriented $3$-manifold $M$ with a fixed homeomorphism $\partial M\simeq\Sigma$ there is an associated vector $RT_r(M)\in V_r(\Sigma)$. 

Moreover for a disjoint union $M=M_1 \coprod M_2,$ we have
$$RT_r(M)=RT_r(M_1)\otimes RT_r(M_2) \in V_r(\Sigma_1)\otimes V_r(\Sigma_2).$$

\item For any odd integer $r,$ there is a natural Hermitian form
$$\langle \cdot , \cdot \rangle : V_r(\Sigma) \otimes V_r(\overline{\Sigma}) \rightarrow \mathbb{C},$$
with the following property:  Given  $M$ a compact oriented $3$-manifold and $\Sigma$ an embedded surface in $M,$ if  we let $M'$ be the manifold obtained by cutting $M$ along $\Sigma,$ 
with  $\partial M'=\Sigma \coprod \overline{\Sigma} \coprod \partial M$, then
 we have $RT_r(M)=\Phi (RT_r(M'))$. Here $\Phi$ is the linear map

$$ \Phi :  V_r(\Sigma) \otimes V_r(\overline{\Sigma}) \otimes V_r(\partial M)  \longrightarrow  V_r(\partial M),$$

defined  by $ \Phi (v \otimes w \otimes \varphi )=   \langle v , w \rangle \varphi$.

\end{enumerate}
\end{theorem}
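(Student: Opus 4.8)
The plan is to recall the skein-theoretic construction of the $\mathrm{SO}(3)$-TQFT and indicate how each listed property is obtained; since the statement is quoted from \cite[Theorem 1.4]{BHMV2}, the ``proof'' is really a summary of that construction.

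First I would fix the primitive $2r$-th root of unity $A$ and work with the Kauffman bracket skein module at $A$. For a closed oriented $3$-manifold $M$, choose a presentation of $M$ as Dehn surgery on a framed link $L \subset S^3$; color each component of $L$ by the Kirby element $\omega = \sum_{i \in I_r}[i+1]\,e_i$, where $e_i$ is the $i$-th Jones--Wenzl idempotent in the skein of an annulus and $I_r = \{0,2,\dots,r-3\}$; evaluate the resulting Kauffman bracket in $S^3$; and normalize by the appropriate powers of $\eta_r$ together with a signature correction depending on $\sigma(L)$. The essential point, and the place where the oddness of $r$ and the $\mathrm{SO}(3)$ color set enter, is that this scalar is invariant under the two Kirby moves, which produces a well-defined $RT_r(M) \in \C$ (up to the anomaly, which we are ignoring). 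Properties (1)--(3) follow by direct computation with this surgery formula: $RT_r(S^3) = \eta_r$ and $RT_r(S^2 \times S^1) = 1$ from the empty link and the zero-framed unknot, the connected-sum formula from the behaviour of the normalization on split links, and $RT_r(\overline M) = \overline{RT_r(M)}$ from the fact that reversing the orientation of $M$ replaces $A$ by $A^{-1}$, which is complex conjugation on the unit circle.

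Next I would construct the vector spaces by the universal construction of \cite{BHMV2}. For a compact oriented surface $\Sigma$, let $\mathcal{V}(\Sigma)$ be the free $\C$-vector space on oriented $3$-manifolds $M$ equipped with a homeomorphism $\partial M \cong \Sigma$; the closed-manifold invariant induces a sesquilinear pairing $\langle M_1, M_2 \rangle := RT_r(M_1 \cup_{\Sigma} \overline{M_2})$ on $\mathcal{V}(\Sigma)$, and one sets $V_r(\Sigma) := \mathcal{V}(\Sigma)/\mathrm{rad}\langle \cdot,\cdot\rangle$. With this definition, properties (4), (5), and the gluing statement (6) become essentially formal: disjoint union of surfaces corresponds to tensor product because $RT_r$ is multiplicative under disjoint union; a $3$-manifold $M$ with $\partial M \cong \Sigma$ defines a vector $RT_r(M) \in V_r(\Sigma)$ as its own image; functoriality under cobordisms comes from gluing surgery presentations; and the identity $RT_r(M) = \Phi(RT_r(M'))$ for $M$ cut along an embedded $\Sigma$ is precisely the compatibility of the pairing with gluing along $\Sigma$.

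The one substantial point — and the main obstacle — is showing that $V_r(\Sigma)$ is finite dimensional, without which the word ``TQFT'' is empty. For this I would argue that, modulo the radical, any $M$ bounding the genus-$g$ surface $\Sigma_g$ is a $\C$-linear combination of a fixed handlebody $H_g$ containing a banded trivalent graph (a spine of $H_g$) colored by elements of $I_r$: handle slides let one trade surgery curves for skeins inside $H_g$, and the Kauffman bracket skein module of $H_g$ is spanned by colorings of the spine. The admissibility conditions at the trivalent vertices then leave only finitely many such colorings, so finitely many skeins span $V_r(\Sigma_g)$; this gives both finite-dimensionality and an explicit spanning set. Checking that the pairing is nondegenerate on this set — which once more uses the arithmetic of the chosen root $A$ — identifies $V_r(\Sigma)$ with $RT_r(\Sigma)$, yields $V_r(\overline{\Sigma}) = \overline{V_r(\Sigma)}$, and completes the verification of all six properties.
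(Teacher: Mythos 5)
Your sketch is correct and follows the same route as the cited source: the paper itself gives no proof of Theorem \ref{TQFT}, quoting it directly from \cite[Theorem 1.4]{BHMV2}, and your summary (surgery formula with the Kirby color and Kirby-move invariance for (1)--(3), the universal construction for (4)--(6), and finite-dimensionality via colored spines of handlebodies together with nondegeneracy of the pairing) is an accurate account of how that reference establishes the statement. No gaps to flag at this level of detail.
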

\vskip 0.03in 

\noindent{\bf Mapping Cylinders.} A class of $3$-manifolds with boundary to which the construction can be applied are the mapping cylinders of maps of surfaces:
Given a surface $\Sigma$ and an element $\varphi \in \mathrm{Mod}(\Sigma)$ in its mapping class group, let
$$M_{\varphi}=[0,1]\times \Sigma \underset{(x,1)\sim \varphi(x)}{\cup} \Sigma.$$
Then $RT_r(M_{\varphi})$ is a vector in $V_r(\Sigma)\otimes \overline{V_r(\Sigma)}$. The later space can be identified with $\mathrm{End}(V_r(\Sigma))$ as $V_r(\overline{\Sigma})\simeq V_r( \Sigma)^*$ by the natural Hermitian form.  The assignment  $\rho_r(\varphi)=RT_r(M_{\varphi})$, defines a projective representation
$$ \rho_r :   \mathrm{Mod}(\Sigma) \longrightarrow  \mathbb{P}\mathrm{End}(V_r(\Sigma)). $$
These representations are known as the  the quantum representations of mapping class groups; they are projective because of the above mentioned TQFT anomaly factor.
 Given the mapping torus 
$$N_{\varphi}=[0,1]\times \Sigma /_{(x,1)\sim (\varphi(x),0)},$$ 
of a class  $\varphi \in \mathrm{Mod}(\Sigma)$, by  \cite[Formula 1.2]{BHMV2} we have $RT_r(N_{\varphi})=\mathrm{Tr}(\rho_r (\varphi))$.

We will need the following well-known fact which we state as a lemma. A proof can be found for example in \cite{FreedKrush}.
\begin{lemma}\label{ellipticinvol}Let $T^2$ be the 2-dimensional torus and let $\varphi :  T^2\simeq S^1 \times S^1  \rightarrow  S^1 \times S^1,$
be the elliptic involution, defined by $(x,y) \rightarrow  (-x,-y)$. Then $\rho_r(\varphi)=\mathrm{id}_{V_r(T^2)}.$
\end{lemma}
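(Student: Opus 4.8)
The plan is to realize the action of $\varphi$ on $V_r(T^2)$ geometrically, by extending it over a solid torus, and then to match that action with $\rho_r$. Recall that $V_r(T^2)$ is naturally identified with the (reduced) $SO(3)$ Kauffman bracket skein module of the solid torus $H=S^1\times D^2$: the skein module of $H$ is the polynomial ring $\mathbb{C}[z]$ on the core $z=S^1\times\{0\}$, and $V_r(T^2)$ is the resulting finite-dimensional quotient, with basis $\{e_i\}_{i\in I_r}$ in which each $e_i$ is a specific polynomial in $z$ (the core colored by the $i$-th Jones--Wenzl idempotent). The key observation is that $\varphi$ extends over $H$: writing $H=\{(z,w)\in D^2\times S^1\}$, the map $\Phi(z,w)=(\bar z,\bar w)$ restricts to $\varphi$ on $\partial H=T^2$ and is orientation-preserving on the $3$-manifold $H$, since it reverses orientation on each of the two factors. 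As $\Phi$ carries the core onto itself --- reversing its orientation, which is irrelevant because skein elements are unoriented framed links --- the induced endomorphism $\Phi_*$ of $\mathbb{C}[z]$ is the identity, hence $\Phi_*(e_i)=e_i$ for every $i\in I_r$.

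Next I would invoke the skein-theoretic construction of $RT_r$ in \cite{BHMV2}: for a mapping class extending over a handlebody $H$, the vector $RT_r(M_\varphi)\in V_r(\partial H)\otimes\overline{V_r(\partial H)}=\mathrm{End}(V_r(\partial H))$ is, up to the anomaly, the endomorphism of the skein module induced by the extension. Concretely, $\rho_r(\varphi)(e_i)$ is computed by gluing $M_\varphi$ onto $(H,e_i)$ along one boundary torus, which returns the colored manifold $(H,\Phi_* e_i)=(H,e_i)$; hence $\rho_r(\varphi)=\mathrm{id}_{V_r(T^2)}$, up to a root of unity coming from the $p_1$-anomaly.

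The main, and essentially only, remaining obstacle is to pin this scalar to $1$, and I expect this to be the delicate step. One route is to check that $\Phi$ respects the standard $p_1$-structure of $H$, so that no anomaly is introduced; this is the argument carried out in \cite{FreedKrush}. Alternatively, since $\varphi^2=\mathrm{id}_{T^2}$ forces $\rho_r(\varphi)=\mu\cdot\mathrm{id}$ with $\mu$ a root of unity, one may compute the trace $RT_r(N_\varphi)=\mathrm{Tr}\,\rho_r(\varphi)=\mu\cdot\dim V_r(T^2)$ for the mapping torus $N_\varphi$ --- a flat manifold Seifert fibered over $S^2(2,2,2,2)$ --- via the gluing axiom of Theorem \ref{TQFT}(6), and verify it equals $\dim V_r(T^2)=(r-1)/2$, which gives $\mu=1$. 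In any event, for the uses of this lemma in the present paper only the moduli of the invariants enter, so the exact value of $\mu$ is immaterial. Finally, I note that the first paragraph can be replaced by the modular-data argument: under $\mathrm{Mod}(T^2)\cong SL_2(\mathbb{Z})$ (the action on $H_1(T^2;\mathbb{Z})$) the elliptic involution is $-I=S^2$, with $S$ represented up to anomaly by the modular $S$-matrix of the $SO(3)$-TQFT; then $S^2$ is the charge-conjugation matrix, which is the identity because every color in $I_r$ is self-dual.
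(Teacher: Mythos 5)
Your argument is correct in substance, but note that the paper does not actually prove Lemma \ref{ellipticinvol}: it records it as a well-known fact and refers to \cite{FreedKrush}. So what you have written is a genuinely independent, self-contained route rather than a variant of the paper's (nonexistent) proof. Your main line --- extend $\varphi$ over the solid torus $H$ by coordinatewise conjugation, observe that this is orientation-preserving and carries the banded core (and hence each basis skein $e_i\in V_r(T^2)$) to itself, then compute $\rho_r(\varphi)e_i$ by gluing the mapping cylinder onto $(H,e_i)$ --- is the standard skein-theoretic proof and fits the framework of \cite{BHMV2} and Theorem \ref{TQFT}; the modular-data alternative ($-I=S^2=$ charge conjugation, which is the identity because every color in $I_r$ is self-dual in the $SO(3)$ theory) is equally valid, though both arguments as stated only pin down $\rho_r(\varphi)$ up to the anomaly scalar. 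That scalar is the one soft spot: your second suggested fix, computing $RT_r(N_\varphi)=\mathrm{Tr}\,\rho_r(\varphi)$ and ``verifying'' it equals $\dim V_r(T^2)=\frac{r-1}{2}$, is essentially equivalent to the statement being proved and so is circular as written; the honest way to remove the scalar is the $p_1$-structure bookkeeping you mention first (choosing the extension over $H$ compatibly with the standard structure). However, since the paper defines $\rho_r$ only as a projective representation and its sole use of the lemma (the computation $TV_r(K^2\tilde{\times}I)=\frac{r-1}{2}$ in Theorem \ref{TVSeifert}, where only the modulus of $RT_r$ matters) needs only the projective statement, your proof is fully adequate for the paper's purposes, and it has the advantage of making the paper self-contained where the paper merely cites the literature.
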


\smallskip

In the next statement we summarize from \cite{BHMV2} the facts 
about the dimensions of  $V_r(\Sigma)$ that we will need.

\begin{theorem}{\rm (\cite{BHMV2})}  \label{TQFTbasis} \ We have the following:
\begin{enumerate}
\item For any odd integer $r\geqslant 3,$ and any primitive $2r$-th root of unity, the vector space  $V_r(T^2)$ has dimension $\displaystyle{\frac{r-1}{2}}$  and 
 the Hermitian form $\langle \cdot , \cdot \rangle$ on $V_r(T^2)$ is definite positive.

\item If $\Sigma_g$ is the closed compact oriented surface of genus $g \geqslant 2$,  then $\mathrm{dim}(V_r(\Sigma_g))$ is a polynomial in $r$ of degree $3g-3$. 
\end{enumerate}
\end{theorem}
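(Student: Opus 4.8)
The plan is to establish the two dimension statements for $V_r(T^2)$ and $V_r(\Sigma_g)$ by recalling the explicit basis of the TQFT vector spaces from \cite{BHMV2}, and then invoking the well-known formulas. For part (1), the standard fact is that $V_r(T^2)$ has a basis indexed by the color set of the theory: in the $SO(3)$ setting at odd level $r$, the admissible colors of a single curve on the torus are the even integers in $I_r=\{0,2,\ldots,r-3\}$, which has $\frac{r-1}{2}$ elements. So the first thing I would do is recall from \cite{BHMV2} the description of $V_r(T^2)$ via the solid torus with a colored core, note that the set of admissible colors has cardinality $\frac{r-1}{2}$, and conclude $\dim V_r(T^2)=\frac{r-1}{2}$. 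The positive-definiteness of the Hermitian form on $V_r(T^2)$ is also stated in \cite[Theorem 1.4 and Section 4]{BHMV2} for these $SO(3)$ theories at $q=e^{2\pi i/r}$; I would simply cite it, perhaps noting that the basis vectors are orthogonal and that the relevant Gram matrix entries (quantum dimensions / the $\eta_r$-normalized bracket values) are positive reals for this choice of root of unity.

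For part (2), I would use the Verlinde formula, or more elementarily the gluing/factorization structure of the TQFT. The cleanest route: a genus-$g$ surface $\Sigma_g$ decomposes along $3g-3$ disjoint simple closed curves into $2g-2$ pairs of pants; the TQFT then gives that $V_r(\Sigma_g)$ has a basis indexed by admissible colorings of the $3g-3$ cutting curves (the "fusion" or "lollipop" basis), where a coloring is admissible if at each pair of pants the three colors satisfy the admissibility conditions recalled in Section~\ref{sec:TVdef}. Counting such colorings amounts to a lattice-point count, and by the Verlinde formula this count is a quasi-polynomial in $r$; for odd $r$ (the $SO(3)$ theory) one checks the leading term is governed by the volume of the polytope cut out by the admissibility inequalities in the $(3g-3)$-dimensional space of colorings, which has dimension exactly $3g-3$. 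Hence $\dim V_r(\Sigma_g)$ grows like $r^{3g-3}$ and is (quasi-)polynomial of that degree; since the statement only claims "a polynomial in $r$ of degree $3g-3$," I would either cite the precise form from \cite{BHMV2} (where it is computed) or present the Verlinde-formula computation and extract the degree.

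Concretely, the steps in order: (i) recall the basis of $V_r(T^2)$ indexed by $I_r$, deduce $\dim V_r(T^2)=\frac{r-1}{2}$; (ii) cite the positivity of the Hermitian form on $V_r(T^2)$ from \cite{BHMV2}; (iii) recall the pair-of-pants decomposition of $\Sigma_g$ into $2g-2$ pieces along $3g-3$ curves and the resulting fusion basis of $V_r(\Sigma_g)$; (iv) write down the resulting count (Verlinde formula) and read off that it is polynomial in $r$ of degree $3g-3$, with positive leading coefficient controlled by the volume of the admissibility polytope.

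I expect the main obstacle to be (iv): turning the Verlinde count into a clean statement that it is genuinely \emph{polynomial} (rather than only quasi-polynomial) of the stated degree. In fact for general $g$ the dimension is known to be a quasi-polynomial, and it is polynomial for $r$ restricted to a fixed residue class; since here $r$ always runs over odd integers this is exactly the relevant sub-case, but one must be slightly careful. The honest and efficient resolution is that all of this is already carried out in \cite{BHMV2}, so in the paper I would simply attribute both parts of the theorem to \cite{BHMV2} and include only enough of the basis description to make the degree statement transparent and to fix the value $\frac{r-1}{2}$ that is used repeatedly in what follows.
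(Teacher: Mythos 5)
Your proposal is correct and ends up exactly where the paper does: the paper's proof is a pure citation, attributing the dimension and positivity statements for $V_r(T^2)$ to \cite[Corollary 4.10, Remark 4.12]{BHMV2} and the genus-$g$ dimension (Verlinde) statement to \cite[Corollary 1.16 and Remark (iii)]{BHMV2}. Your extra sketch (color set of cardinality $\frac{r-1}{2}$ for the torus, fusion basis and Verlinde count of degree $3g-3$ for $\Sigma_g$, with the parity caveat handled by restricting to odd $r$) is accurate background but not part of the paper's argument, which simply defers to \cite{BHMV2} as you yourself propose.
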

\begin{proof} The first assertion of part (1) is proved in \cite[Corollary 4.10]{BHMV2} and the second assertion is given in \cite[Remark 4.12]{BHMV2}. The second assertion follows by 
\cite[Corollary 1.16 and Remark(iii)]{BHMV2}.
\end{proof}

To continue, recall that the double $D(M)$ of a manifold $M$ is defined as $M \coprod \overline{M}$ if $M$ is closed and as $M\underset{\Sigma}{\cup} \overline{M}$ if $M$ has non-empty boundary. 
We end this section with a theorem that for a manifold $M$ relates the $SO(3)$-Turaev-Viro invariants $TV_r(M)$ defined in Section \ref{sec:TVdef}, to the $RT_r$ invariant of the double $D(M)$ of $M$.

 \begin{theorem}[]\label{RTdoubleTV}{\rm (\cite{BePe})}\ 
Let $M$ be a $3$-manifold with boundary, $r$ be an odd integer and $q=e^{\frac{2i\pi}{r}}$. Then  
$$TV_r(M,q)=\eta_{r}^{-\chi(M)}RT_r(D(M),e^{\frac{i\pi}{r}}),$$
where $\chi(M)$ is the Euler characteristic of $M$ and $\eta_r=\displaystyle{\frac{2\sin(\frac{2\pi}{r})}{\sqrt{r}}=RT_r(S^3)}.$
\end{theorem}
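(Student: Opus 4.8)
The plan is to realize the Turaev-Viro state sum as a Reshetikhin-Turaev invariant of the double, using the classical principle that $TV_r$ is the ``modulus square'' of $RT_r$. First I would recall the Roberts-type result (see \cite{Roberts, BePe}) identifying $TV_r(M,q)$ with $|RT_r(M)|^2$-type quantities when $M$ is closed; concretely, for $M$ closed one has $TV_r(M,q) = \eta_r^{2} \langle RT_r(M), RT_r(M)\rangle = \eta_r^{2}|RT_r(M, e^{i\pi/r})|^2$, where the normalization factor is dictated by comparing $TV_r(S^3) = \eta_r^2$ (from the state-sum on the one-vertex triangulation) with $RT_r(S^3) = \eta_r$. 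The central computation is then to express $\langle RT_r(M), RT_r(M)\rangle$ as $RT_r(D(M))$ via property (6) of Theorem \ref{TQFT}: cutting the double $D(M) = M \cup_\Sigma \overline{M}$ along $\Sigma = \partial M$ produces $M \coprod \overline{M}$, whose $RT_r$-value is $RT_r(M)\otimes RT_r(\overline{M}) = RT_r(M)\otimes \overline{RT_r(M)} \in V_r(\Sigma)\otimes V_r(\overline{\Sigma})$, and applying the gluing map $\Phi$ (contraction by the Hermitian form) yields exactly $\langle RT_r(M), RT_r(M)\rangle = RT_r(D(M))$.

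Next I would account for the Euler characteristic factor. The discrepancy between the naive normalization and the stated $\eta_r^{-\chi(M)}$ arises from how the state-sum constant $\eta_r^{2|V|}$ and the edge/tetrahedron weights behave under the $TV_r = |RT_r|^2$ identification; equivalently, one tracks the normalization through the Turaev-Viro TQFT $\mathrm{TV}_r = RT_r \otimes \overline{RT_r}$ and the fact that the Turaev-Viro vector space associated to $\Sigma$ is $V_r(\Sigma)\otimes \overline{V_r(\Sigma)}$, whose relevant normalizing factor scales as a power of $\eta_r$ governed by $\chi(\Sigma)$, and hence by $\chi(M) = \tfrac12\chi(\partial M)$ when $\partial M$ consists of tori (so $\chi(M)=0$) but nontrivially in general. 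I would verify the exponent on the model case $M = $ handlebody (or $M$ a collar $\Sigma \times [0,1]$, where $D(M) = \Sigma \times S^1$), pin down the constant there, and invoke invariance under handle moves / change of triangulation to conclude it in general.

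The main obstacle is bookkeeping the normalization factor precisely: the invariants $RT_r$ in \cite{BHMV2} carry an anomaly and are only defined up to a $2r$-th root of unity, so the identity can only be asserted for the moduli — fortunately this is all we need, and the Remark before Theorem \ref{TQFT} already licenses ignoring the anomaly. A secondary subtlety is the shift in the root of unity: $TV_r$ is evaluated at $q = e^{2i\pi/r}$ while $RT_r(D(M))$ is evaluated at $A = e^{i\pi/r}$ (so that $A^2 = q$), and one must check that the skein-theoretic $RT_r$ at $A = e^{i\pi/r}$ is the theory whose modulus-square reproduces the even-colored state sum; this is precisely the content of the $SO(3)$ specialization emphasized after Theorem \ref{TVdef}, and it is where the restriction to $r$ odd and even colors is used. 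Rather than reprove all of this from scratch, I would cite \cite{BePe} (and \cite{Roberts}) for the core identification and limit the argument to extracting the clean form of the statement with the $\eta_r^{-\chi(M)}$ normalization.
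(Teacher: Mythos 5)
The paper offers no independent proof of this statement: it attributes the closed case to Roberts and Walker--Turaev, the case with boundary to Benedetti--Petronio, and points to \cite[Theorem 3.1]{DKY} for the adaptation from the $SU(2)$ to the $SO(3)$ theory (odd $r$, even colors). Since your plan is ultimately to cite \cite{BePe} and \cite{Roberts} for the core identification and only to organize the statement around it, you are taking essentially the same route as the paper. The extra TQFT computation you sketch --- cutting $D(M)=M\cup_\Sigma\overline{M}$ along $\Sigma$ and contracting with the Hermitian form to get $RT_r(D(M))=\langle RT_r(M),RT_r(M)\rangle$ via Theorem \ref{TQFT}(6) --- is correct and is in fact how the paper later exploits the theorem (in the proof of Theorem \ref{toruscutting}), though it is not part of the paper's justification of the statement itself. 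Mentioning the $SO(3)$ subtlety is good; you should cite \cite[Theorem 3.1]{DKY} explicitly for it, as the paper does.

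Two cautions on your bookkeeping. First, your closed-case warm-up formula $TV_r(M,q)=\eta_r^{2}\,|RT_r(M,e^{i\pi/r})|^2$ carries a spurious factor of $\eta_r^{2}$ and contradicts your own $S^3$ check: for closed $M$ one has $\chi(M)=0$ and $D(M)=M\coprod\overline{M}$, so the theorem reads $TV_r(M)=RT_r(M)\,\overline{RT_r(M)}=|RT_r(M)|^2$, which is consistent with $TV_r(S^3)=\eta_r^{2}=|RT_r(S^3)|^2$; with your extra factor you would get $\eta_r^{4}$ on the right. Second, ``verify the exponent on a model case and invoke invariance under handle moves'' does not by itself establish the $\eta_r^{-\chi(M)}$ normalization in general --- that is exactly the nontrivial content of \cite{BePe} (relating the state sum on a triangulation or handle decomposition of $M$ to a surgery presentation of $D(M)$), so this step must remain a citation rather than a verification-plus-invariance argument. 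With the normalization slip corrected and the reliance on \cite{BePe} and \cite[Theorem 3.1]{DKY} made explicit, your proposal matches the paper's treatment.
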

The proof of Theorem \ref{RTdoubleTV}  for closed manifolds is due to Roberts \cite{Roberts} and  Walker and Turaev \cite{Turaevbook}.
The proof for manifolds with non-empty boundary is essentially due to
Benedetti and Petronio  \cite{BePe}: Although in \cite{BePe} only  the invariants $RT_r(M)$   corresponding to the $\mathrm{SU}(2)$-TQFT are considered,  the proof can be adapted 
is the $SO(3)$-TQFT setting. This was done in \cite[Theorem 3.1]{DKY}.


\section{Finiteness of $LTV$} 
\label{sec:LTVfinite}
The goal of this section is to prove Theorem \ref{nbtetrahedrabound}.
We first provide an upper bound to the quantum $6j$-symbols at level $r$ and at the root $q=e^{\frac{2i\pi}{r}}$. Using this we bound the invariants $TV_r(M)$ of a $3$-manifold $M$ in terms of the number of tetrahedra in a triangulation of $M.$ This, in particular, will  prove that $LTV(M)$ is finite.

\subsection{Quantum factorials and the Lobachevsky function}
It is a well known fact in quantum topology that the asymptotics of quantum factorials are related to the Lobachevsky function. In this section we give a version of this fact at the root $q=e^{\frac{2i\pi}{r}}.$
\\For $P$ a Laurent polynomial, let $ev_r(P)$ be the evaluation of the absolute value of $P$ at $q=e^{\frac{2i\pi}{r}}$, that is 
$$ev_r(P)=|P(e^{\frac{2i\pi}{r}})|.$$
Let also $\Lambda(x)$ denote the Lobachevski function, defined by 
$$\Lambda(x)=-\int_0^x  \log  |2 \sin(x)|dx.$$
We estimate the growth of quantum factorials at $q=e^{\frac{2i\pi}{r}}$ using the following lemma.
\begin{proposition}\label{quantumfact} Given an integer   $0<n<r$ we have $$\log(ev_r (\lbrace n \rbrace ! ))=-\frac{r}{2\pi}\Lambda(\frac{2 n \pi}{r}) +O(\log r).$$
Moreover, in this estimate $O(\log r)$ is uniform: there exists a constant $C_3$ independent of $n$ and $r,$ such that $O(\log r)\leqslant C_3  \log r$. 
\end{proposition}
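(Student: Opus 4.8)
The plan is to turn the statement into a comparison between a finite sum and an integral. Writing $q=e^{2\pi i/r}$ we have $\lbrace k\rbrace=q^{k}-q^{-k}=2i\sin(2\pi k/r)$, so $ev_r(\lbrace k\rbrace)=2\abs{\sin(2\pi k/r)}$ and
$$\log\bigl(ev_r(\lbrace n\rbrace !)\bigr)=\sum_{k=1}^{n}\log\bigl(ev_r(\lbrace k\rbrace)\bigr)=\sum_{k=1}^{n}\log\abs{2\sin\tfrac{2\pi k}{r}}.$$
On the other hand, the substitution $t=2\pi s/r$ in the definition of the Lobachevsky function gives
$$-\frac{r}{2\pi}\Lambda\bigl(\tfrac{2n\pi}{r}\bigr)=\frac{r}{2\pi}\int_{0}^{2n\pi/r}\log\abs{2\sin t}\,dt=\int_{0}^{n}\log\abs{2\sin\tfrac{2\pi s}{r}}\,ds.$$
Thus the Proposition amounts to the assertion that, for $g(s)=\log\abs{2\sin(2\pi s/r)}$, one has $\bigl|\sum_{k=1}^{n}g(k)-\int_{0}^{n}g(s)\,ds\bigr|\le C_3\log r$ with $C_3$ independent of $n$ and $r$. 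Note that $g(k)$ is finite for every integer $1\le k<r$ since $r$ is odd, and $\int_0^n g$ converges because the singularities of $g$ (at multiples of $r/2$) are only logarithmic.

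To prove this I would compare the sum with the integral term by term using the midpoint rule:
$$\sum_{k=1}^{n}g(k)-\int_{0}^{n}g(s)\,ds=\sum_{k=1}^{n}\Bigl(g(k)-\int_{k-1/2}^{k+1/2}g(s)\,ds\Bigr)+\int_{n}^{n+1/2}g(s)\,ds-\int_{0}^{1/2}g(s)\,ds.$$
The two boundary integrals are $O(\log r)$: near $s=0$ one has $g(s)=\log(4\pi s/r)+O(r^{-2})$, so $\int_0^{1/2}\abs{g}=O(\log r)$, and $\int_n^{n+1/2}\abs{g}$ is $O(\log r)$ as well, the worst case being when this interval straddles the zero of $\sin(2\pi s/r)$ at $s=r/2$, where the singularity of $g$ is integrable. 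For the summands, away from the zeros of the sine $g$ is $C^2$ and the midpoint-rule error is at most $\tfrac1{24}\max_{[k-1/2,k+1/2]}\abs{g''}$; since $g''(s)=-\tfrac{4\pi^2}{r^2}\csc^2(2\pi s/r)$ and $\abs{\sin(2\pi s/r)}\ge \tfrac4r\,\mathrm{dist}(s,\tfrac r2\Z)$, one gets $\max_{[k-1/2,k+1/2]}\abs{g''}=O\bigl(\mathrm{dist}(k,\tfrac r2\Z)^{-2}\bigr)$, and summing this over the integers $k\in(0,r)$ yields a convergent series, hence a contribution of size $O(1)$. The at most two indices $k$ for which $[k-1/2,k+1/2]$ meets $\tfrac r2\Z$, namely $k=\tfrac{r\pm1}{2}$, are dealt with separately using the crude bounds $\abs{g(k)}=O(\log r)$ and $\bigl|\int_{k-1/2}^{k+1/2}g\bigr|=O(\log r)$. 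Adding all the pieces gives the uniform $O(\log r)$ bound.

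The one place where care is genuinely needed — the main obstacle — is the uniformity of the error near the zeros of the sine, in particular the zero at $s=r/2$, which enters exactly when $n$ is of order $r/2$ or larger. Two facts make it work. First, $r$ is odd, so every integer lies at distance at least $1/2$ from $\tfrac r2\Z$; this keeps each individual term $g(k)$ of size only $O(\log r)$ rather than unbounded, which is what allows the crude treatment of the two exceptional terms. Second, the bound $\mathrm{dist}(k,\tfrac r2\Z)^{-2}$ is summable (each value $\tfrac12,1,\tfrac32,\dots$ is attained by boundedly many $k$, and the bulk of the $k$'s give terms of size $O(r^{-2})$), so the accumulated midpoint-rule error is in fact $O(1)$; the $O(\log r)$ in the statement comes solely from the finitely many boundary and exceptional terms. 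Everything else is routine.
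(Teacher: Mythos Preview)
Your argument is correct. The key observations you identify — that the oddness of $r$ keeps every integer at distance $\geq 1/2$ from $\tfrac r2\Z$, and that the resulting bound $\max_{[k-1/2,k+1/2]}|g''|=O\bigl((\mathrm{dist}(k,\tfrac r2\Z)-\tfrac12)^{-2}\bigr)$ is summable to $O(1)$ — both hold, and the boundary and exceptional pieces are indeed $O(\log r)$ for the reasons you give.

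The paper reaches the same conclusion by a slightly different (and somewhat shorter) route. Instead of the midpoint rule and second-derivative bounds, it applies the first-order Euler--MacLaurin formula on the interval $[1,n]$ (splitting at $\lfloor r/2\rfloor$ when $n\geq r/2$), and controls the remainder $R_0\leq\tfrac12\int|f'|$ by the total variation of $f(t)=\log|2\sin(2\pi t/r)|$. Since $f$ is increasing-then-decreasing on each half of $(0,r)$, this total variation is bounded by a constant times $\sup|f|=O(\log r)$, and the endpoint terms $f(1),f(n),f(\lfloor r/2\rfloor),f(\lceil r/2\rceil)$ are likewise $O(\log r)$ by the same distance-$\geq 1/2$ observation you use. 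So the paper's remainder is $O(\log r)$ throughout, whereas yours is $O(1)$ for the bulk and $O(\log r)$ only at finitely many places; the paper's argument trades this slight sharpness for not needing the summability computation of $\sum d_k^{-2}$. Both are standard sum-versus-integral comparisons and either is perfectly adequate here.
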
 
\begin{proof}
 First note that $ev_r (\lbrace n \rbrace ! )=\underset{j=1}{\overset{n}{\prod}} 2 \sin (\frac{2 j \pi}{r})$.  In this product, as $r$ is odd and $0<n<r,$ all factors are non-zero. Thus we can write
$$\log (ev_r (\lbrace n \rbrace ! ))=\underset{j=1}{\overset{n}{\sum}} \log |2 \sin (\frac{2 j \pi}{r})|.$$
The function  $$f(t)=\log |2 \sin(\frac{2\pi t}{r})|,$$ is differentiable on $(0,\frac{r}{2})$ and on $(\frac{r}{2},r)$. 
\vskip 0.05in

\noindent  {\textit{Case 1:} } Assume that $n<\frac{r}{2}$. The Euler-Mac Laurin formula gives  that 
$$\log (ev_r (\lbrace n \rbrace ! ))=\int_1^n \log |2 \sin (\frac{2\pi t}{r})|dt +\frac{f(1)+f(n)}{2}+R_0,$$
where the $R_0\leqslant \frac{1}{2}\int_1^n |f'(t)|dt\leqslant 2 \underset{t \in [1,n]}{\mathrm{sup}}|f(t)| $ as $f$ is increasing then decreasing on $(0,\frac{r}{2})$.

 Note that $|n -\frac{r}{2}|>\frac{1}{2}$ and $\sin(t)\geqslant \frac{2t}{\pi}$ for $0\leqslant t \leqslant \frac{\pi}{2}.$
 Hence, the quantities $|f(1)|,|f(n)|$ and $\underset{t \in [1,n]}{\mathrm{sup}}|f(t)|$ are all bounded by $|\log(\frac{4}{r})|$ for big $r$.
Moreover,  for big $r$, we have
$$\left|\int_0^1 \log |2 \sin (\frac{2\pi t}{r})|dt\right| \leqslant \left|\int_0^1 \log(\frac{4t}{r})dt\right|\leqslant |\log r|+\left|\int_0^1 \log(4t)dt \right|.$$ 
Thus
\begin{multline*}\log (ev_r (\lbrace n \rbrace ! ))=\int_0^{n} \log |2 \sin (\frac{2\pi t}{r})|dt +O(\log r)
\\=\frac{r}{k\pi}\int_0^{\frac{2 n \pi}{r}} \log |2 \sin t| dt +O(\log r)=-\frac{r}{2\pi}\Lambda(\frac{2 n\pi}{r}) +O(\log r).
\end{multline*}

Note that in all our estimations the $O(\log r)$ was independent on $0<n<\frac{r}{2}.$
\vskip 0.09in

\noindent {\textit{Case 2:}}
Assume that  $n\geqslant \frac{r}{2}$. Now we write $\log (ev_r (\lbrace n \rbrace ! ))$ as a sum of two terms
$$\log (ev_r (\lbrace n \rbrace ! ))=\underset{j=1}{\overset{\lfloor \frac{r}{2}\rfloor }{\sum}} \log |2 \sin (\frac{2 j \pi}{r})|+\underset{j=\lceil \frac{r}{2}\rceil}{\overset{n }{\sum}} \log |2 \sin (\frac{2 j \pi}{r})|.$$
Applying the Euler-Mac Laurin formula for each sum we get
\begin{multline*}\log (ev_r (\lbrace n \rbrace ! ))=\int_1^{\lfloor \frac{r}{2}\rfloor} \log |2 \sin (\frac{2\pi t}{r})| dt + \int_{\lceil \frac{r}{2}\rceil}^n \log |2 \sin (\frac{2\pi t}{r})| dt 
\\ +\frac{f(1)+f(\lfloor \frac{r}{2}\rfloor)+f(\lceil \frac{r}{2}\rceil)+f(n)}{2}+R_0,
\end{multline*}

where 
$$R_0\leqslant \frac{1}{2}\left(\int_1^{\lfloor \frac{r}{2}\rfloor} |f'(t)|dt+\int_{\lceil \frac{r}{2}\rceil}^n |f'(t)|dt \right) \leqslant 2 \left(\underset{t \in [1,\lfloor \frac{r}{2}\rfloor]}{\mathrm{sup}}|f(t)|+\underset{t \in [\lceil \frac{r}{2}\rceil,n]}{\mathrm{sup}}|f(t)|\right),$$
 as $f$ is increasing then decreasing on $(0,\frac{r}{2})$ and increasing then decreasing on $(\frac{r}{2},r).$ 
Since $r$ is odd and $\frac{r}{2}$ is a half integer, similarly as in Case 1,  we have that $f(1),f(\lfloor \frac{r}{2}\rfloor),f(\lceil \frac{r}{2}\rceil),f(n)$ are all $\leqslant C_3 \log r$ for some $C_3$ independent of $n$.  Also 
\begin{multline*}\int_1^{\lfloor \frac{r}{2}\rfloor} \log |2 \sin (\frac{2\pi t}{r})| dt + \int_{\lceil \frac{r}{2}\rceil}^n \log |2 \sin (\frac{2\pi t}{r})| dt
\\ =\int_0^n  \log |2 \sin (\frac{2\pi t}{r})| dt +O(\log r)=-\frac{r}{2\pi}\Lambda(\frac{2 n\pi}{r})+O(\log r).
\end{multline*}
This concludes the proof of Proposition \ref{quantumfact}.
\end{proof}


\subsection{Upper bounds for quantum $6j$-symbols} 
\label{sec:6jbound}
In this section, we find an upper bound for the quantum $6j$-symbols that we will need for the proof of Theorem \ref{nbtetrahedrabound}. We show the following:

\begin{proposition}\label{6jbound}For any $r$-admissible $6$-tuple $(a,b,c,d,e,f),$ we have that
$$\frac{2\pi}{r}\log \left( ev_r \begin{vmatrix}
a_1  & a_2 & a_3 \\ a_4  & a_5 & a_6 
\end{vmatrix}\ \right)\leqslant v_8+8\Lambda(\frac{\pi}{8})+O(\frac{\log r}{r}).$$
Moreover, in this estimate the $\displaystyle{O(\frac{\log r}{r})}$ is uniform: there exists a constant $C_3$ independent of the $a_i's$ and $r,$ such that $\displaystyle{O(\frac{\log r}{r})\leqslant C_3  \frac{\log r}{r}}$. 
\end{proposition}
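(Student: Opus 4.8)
The plan is to bound the absolute value of the $6j$-symbol term by term using Proposition~\ref{quantumfact}, which converts each quantum factorial into a Lobachevsky function value up to $O(\log r)$ error, and then to maximize the resulting sum of Lobachevsky values over all admissible $6$-tuples. First I would substitute the normalizations: since $|\zeta_r| = 2\sin(2\pi/r) = O(1/r)$ contributes only an $O(\log r)$ term after taking $\log$, the factors $(\zeta_r)^{-1}$ and the $(\sqrt{-1})^\lambda$ (modulus $1$) are harmless. So the main contribution comes from $\prod_{i=1}^4 \Delta(F_i)$ and from the alternating sum $\sum_z \frac{(-1)^z\{z+1\}!}{\prod_j \{z-T_j\}!\prod_k\{Q_k-z\}!}$. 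For the sum, I would bound it crudely: there are at most $r$ terms, each of modulus at most the product $\frac{ev_r(\{z+1\}!)}{\prod_j ev_r(\{z-T_j\}!)\prod_k ev_r(\{Q_k-z\}!)}$ maximized over the (at most $r$) admissible values of $z$; the factor of $r$ only costs another $O(\log r)$ after taking logarithms. Combining with the $\Delta(F_i)$ factors, each of which is a square root of a ratio of quantum factorials, I get that $\frac{2\pi}{r}\log\left(ev_r\begin{vmatrix} a_1 & a_2 & a_3 \\ a_4 & a_5 & a_6\end{vmatrix}\right)$ is, up to $O(\frac{\log r}{r})$, equal to a signed sum of terms of the form $-\frac{2\pi}{r}\cdot\frac{r}{2\pi}\Lambda(\frac{2m\pi}{r}) = -\Lambda(\frac{2m\pi}{r})$ over the various arguments $m$ appearing in the factorials.

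Concretely, writing $\theta_i = \frac{a_i\pi}{r}$ and introducing the angle-sum variables corresponding to $T_j$, $Q_k$ and the half-perimeter-type quantities inside each $\Delta(F_i)$, the estimate reduces to showing that a certain explicit function $G$ of these angles — a combination of $\Lambda$ evaluated at the face-perimeter arguments (from the $\Delta(F_i)$'s), at $z^*+1$ and at the $z^*-T_j$, $Q_k-z^*$ (from the optimal summand) — is bounded above by $v_8 + 8\Lambda(\pi/8)$. Here one uses that $\Lambda$ is odd, $\pi$-periodic, bounded, and attains its maximum $\Lambda(\pi/6)$; and one recognizes that the combination $G$ is (up to the explicit additive shift and the normalization of $\Delta$) precisely the volume-function appearing in the Costantino/Murakami asymptotic expansion of the $6j$-symbol, whose supremum over admissible configurations is the volume of the regular ideal octahedron $v_8 \simeq 3.6638$. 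The extra $8\Lambda(\pi/8)$ term is the slack from the crude ``max over $z$ times $r$ terms'' bound versus the actual stationary-phase value, and also absorbs the square-root normalizations in the $\Delta(F_i)$ which contribute factors of $\pm\tfrac12\Lambda$ at the corner angles; I would verify by an elementary calculus argument (Lagrange multipliers on the compact region of admissible angles, or a direct case analysis using $\Lambda(2x) = 2\Lambda(x) - 2\Lambda(\tfrac\pi2 - x)$ identities) that the supremum of $G$ is at most $v_8 + 8\Lambda(\pi/8)$, with equality approached at the symmetric configuration where all $a_i$ are near $r/2$.

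The uniformity of the error term follows directly from the uniform form of the error in Proposition~\ref{quantumfact}: each of the boundedly many ($\le 10$) factorials contributes an error $\le C_3\log r$ independent of the colors, and the ``$\times r$ terms'' step contributes $\le \log r$, so the total error in $\log\left(ev_r\begin{vmatrix} a_1 & a_2 & a_3 \\ a_4 & a_5 & a_6\end{vmatrix}\right)$ is $\le C_3'\log r$ with $C_3'$ universal, hence $\frac{2\pi}{r}\cdot(\text{error}) = O(\frac{\log r}{r})$ uniformly. The main obstacle I anticipate is the optimization step: showing rigorously and with the stated explicit constant that the signed sum of Lobachevsky values is bounded by $v_8 + 8\Lambda(\pi/8)$ over the entire (non-convex, piecewise-defined because of the absolute values and the $\max/\min$ in the summation range) admissible region. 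I would handle this by first reducing to the interior of the admissibility polytope where all $\Lambda$-arguments lie in $(0,\pi)$ so $\Lambda$ is smooth, locating critical points via the derivative identity $\Lambda'(x) = -\log|2\sin x|$ (which forces product-of-sines relations characterizing the hyperbolic octahedron), and then checking the boundary strata separately, where several colors hit the extreme admissible values and the $6j$-symbol degenerates to a product of fewer quantum factorials that is easily bounded.
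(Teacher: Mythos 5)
Your reduction steps coincide with the paper's: evaluate each quantum factorial via Proposition \ref{quantumfact}, bound the alternating sum $S$ by its largest summand at the cost of a polynomial (hence $O(\log r)$ after taking $\log$) factor, discard the unit-modulus and $\zeta_r^{\pm 1}$ normalizations, and note that the uniformity of the error follows from the uniform error in Proposition \ref{quantumfact} applied to a bounded number of factorials. All of that is fine.

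The genuine gap is in the optimization step, which is the actual content of the proposition since the specific constant $v_8+8\Lambda(\frac{\pi}{8})$ is what is claimed. After the reduction, the paper bounds the rescaled logarithm by a sum of two pieces that it maximizes \emph{separately}: the four face factors $\Delta(F_i)$ contribute $\sum_{i=1}^4 v(\cdot)$ with $v(\alpha,\beta,\gamma)=\frac12\bigl(\Lambda(\alpha+\beta+\gamma)-\Lambda(\beta+\gamma-\alpha)-\Lambda(\alpha+\gamma-\beta)-\Lambda(\alpha+\beta-\gamma)\bigr)$, whose maximum is $\frac{v_8}{4}$, and the maximal summand of $S$ contributes $g(Z,A_i)=\sum_{i=1}^4\Lambda(Z-U_i)+\sum_{j=1}^3\Lambda(V_j-Z)-\Lambda(Z)$, whose maximum is $8\Lambda(\frac{\pi}{8})$; both maxima are computed by an explicit critical-point analysis using $\Lambda'(x)=-\log|2\sin x|$ (Lemma \ref{maxfunct6j}), and the constant is $4\cdot\frac{v_8}{4}+8\Lambda(\frac{\pi}{8})$. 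Your account of where the constant comes from is different and incorrect: the combined upper-bound function $G$ is \emph{not} the Costantino/Murakami volume function (that identification requires extra ``hyperbolic admissibility'' hypotheses and, in the form you need it --- supremum equal to $v_8$ over all admissible tuples --- is the substantially harder later result of \cite{BDKY}, so invoking it here is both unavailable and circular), and $8\Lambda(\frac{\pi}{8})$ is not ``slack'' from the crude max-over-$z$ bound or from the $\Delta$ normalizations: it is exactly the maximum of $g$, attained at $Z=\frac{7\pi}{8}$ with the $A_i$ near $\frac{\pi}{4}$ or $\frac{3\pi}{4}$, a configuration different from the one maximizing $v$; in particular your claim that the bound is approached when all $a_i$ are near $r/2$ is false (there the growth is only about $v_8$). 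Because your proposed Lagrange-multiplier verification is guided by this misidentification and you never isolate the two functions whose separate maxima produce $v_8$ and $8\Lambda(\frac{\pi}{8})$, the key inequality is asserted rather than proved; to close the gap you would need precisely the splitting into $v$ and $g$ and the explicit computation of their maxima as in the paper's appendix.
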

\begin{remark} The bound of $v_8+8\Lambda(\frac{\pi}{8})$  given above is not optimal. In a closely related context, Costantino proved that the growth rates of quantum $6j$-symbols with so-called hyperbolic admissibility conditions are given by volumes of hyperbolic truncated tetrahedra \cite{costantino:6j}. The argument of Costantino is also applicable in our context from some (but not all) values of the $a_i$'s.  More recently, in \cite{BDKY} the authors with Belletti and Yang, building on the analysis given in this article,
proved that  the upper bound of Proposition \ref{6jbound} can indeed be replaced
with the maximum volume of a hyperbolic truncated tetrahedron, which is $v_8.$
\end{remark}
\begin{proof}
By Equation \ref{6j}, we have

\begin{equation*}\log \left(ev_r \begin{vmatrix}
a_1  & a_2 & a_3 \\ a_4  & a_5 & a_6 
\end{vmatrix} \ \right)= {\sum_{i=1}^4 \log |\Delta(F_i)}|+\log |S|+O(\log r),
\end{equation*}

where $$S= \sum_{z=\max \{T_1, T_2, T_3, T_4\}}^{\min\{ Q_1,Q_2,Q_3\}}\frac{(-1)^z\lbrace  z+1 \rbrace !}{\prod_{j=1}^4\lbrace z-T_j\rbrace !\prod_{k=1}^3\lbrace Q_k-z\rbrace !},$$
and $F_1, F_2, F_3, F_4$ are defined in equation (\ref{notation}).
Each of the $\Delta(F_i)$ terms is a product of 4 quantum factorials. Moreover as $Q_j-T_i\leqslant (r-2)$ for all $i$ and $j,$ we have
\begin{equation*}log|S| \leqslant
\\ \underset{\mathrm{max} T_i \leqslant z \leqslant \mathrm{min} Q_j}{\mathrm{max}}  \log \left| \frac{\lbrace  z \rbrace !}{\prod_{j=1}^4\lbrace z-T_j\rbrace !\prod_{k=1}^3\lbrace Q_k-z\rbrace !}\right|
+O(\log r).
\end{equation*}

Here we used the fact that  $S$ is a sum of a polynomial number of terms and that for $0\leqslant z\leqslant r-2,$ we have $\log|\lbrace z+1 \rbrace|\leqslant O(\log r)$ for some $O(\log r)$ independent on $z$.
\\  To estimate these terms we will use  Proposition \ref{quantumfact}. We will write
 $$A_i=\frac{2\pi a_i}{r},\ U_j=\frac{2\pi T_j}{r}, \ {\rm and} \ V_k=\frac{2\pi Q_k}{r}.$$ We have
\begin{multline}\label{eq:6jasymptotics}log \left(ev_r \begin{vmatrix}
a_1  & a_2 & a_3 \\ a_4  & a_5 & a_6 
\end{vmatrix} \ \right)\\ \leqslant\frac{r}{2\pi}\left(v(\frac{A_1}{2},\frac{A_2}{2},\frac{A_3}{2})+v(\frac{A_1}{2},\frac{A_5}{2},\frac{A_6}{2})+v(\frac{A_2}{2},\frac{A_4}{2},\frac{A_6}{2})+v(\frac{A_3}{2},\frac{A_4}{2},\frac{A_5}{2})\right)
\\ + \frac{r}{2\pi}\ \underset{Z}{\mathrm{max}} \  g(Z,A_i) +O(\log r),
\end{multline}
where 
$$v(\alpha, \beta, \gamma)=\frac{1}{2}\left(\Lambda \left( \alpha+\beta+ \gamma \right)-\Lambda \left( \beta+ \gamma-\alpha \right)-\Lambda \left( \alpha+\gamma-\beta \right)-\Lambda \left( \alpha+\beta- \gamma  \right) \right),$$
and 
\begin{equation*}g(Z,A_i)=\sum_{i=1}^4 \Lambda(Z-U_i)+\sum_{j=1}^3 \Lambda(V_i-Z)-\Lambda(Z).
\end{equation*}
Since the  function $\Lambda$ is bounded, the functions $v$ and $g$ are also bounded. Thus
$$\log \left(ev_r \begin{vmatrix}
a_1  & a_2 & a_3 \\ a_4  & a_5 & a_6 
\end{vmatrix} \ \right)\leqslant \frac{r}{2\pi}C_1+O(\log r),$$ for some constant $C_1$.
We show that one can use $C_1=v_8+8\Lambda \left(\frac{\pi}{8}\right)$ by computing the maximum of functions $v$ and $g$ above. This is done using the following lemma that
we will prove in the Appendix.
\begin{lemma}\label{maxfunct6j} The maximum of the function $v$ is $\displaystyle{\frac{v_8}{4}}$ and the maximum of the function $g$ is $\displaystyle{8\Lambda \left( \frac{\pi}{8}\right)}$.
\end{lemma}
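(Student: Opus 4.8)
The plan is to treat each assertion as a constrained–maximum problem for a sum of values of the Lobachevsky function and to solve it by locating critical points, using $\Lambda'(x)=-\log|2\sin x|$ together with the two symmetries $\Lambda(-x)=-\Lambda(x)$ and $\Lambda(x+\pi)=\Lambda(x)$. Since $\Lambda$ is bounded and $\pi$-periodic, both $v$ and $g$ descend to functions on compact tori, so their maxima are attained at critical points, and one never has to worry about behaviour at infinity.

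\smallskip
\noindent\emph{The function $v$.} The triples $(\alpha,\beta,\gamma)$ that occur all lie in $[0,\pi]^3$, so it suffices to maximize $v$ over this cube. On each of its six faces (one coordinate in $\{0,\pi\}$) one checks, using that $\Lambda$ is odd and $\pi$-periodic, that $v\equiv 0$; since $v_8/4=2\Lambda(\pi/4)>0$, the maximum lies at an interior critical point. Setting $\partial v/\partial\alpha=0$ and its cyclic analogues, substituting $\Lambda'=-\log|2\sin|$, and using $2\sin A\sin B=\cos(A-B)-\cos(A+B)$, the system collapses to
$$\cos 2\alpha=\cos 2\beta\cos 2\gamma,\qquad \cos 2\beta=\cos 2\gamma\cos 2\alpha,\qquad \cos 2\gamma=\cos 2\alpha\cos 2\beta$$
away from degenerate loci (where $\sin 2\beta\sin 2\gamma=0$, etc.) on which $v$ vanishes anyway. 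Subtracting these equations in pairs forces $\cos 2\alpha=\cos 2\beta=\cos 2\gamma$, with common value $c$ satisfying $c=c^2$, so $c\in\{0,1\}$; the value $c=1$ lands on the boundary, while $c=0$ gives $\alpha,\beta,\gamma\in\{\pi/4,3\pi/4\}$. Evaluating $v$ at the eight such points, with $\Lambda(3\pi/4)=-\Lambda(\pi/4)$ and $\Lambda(5\pi/4)=\Lambda(\pi/4)$, one gets only the values $\pm 2\Lambda(\pi/4)$, the maximum $2\Lambda(\pi/4)$ being attained e.g. at $(3\pi/4,3\pi/4,3\pi/4)$. As $v_8=8\Lambda(\pi/4)$, this equals $v_8/4$.

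\smallskip
\noindent\emph{The function $g$.} Here I would first rewrite $g$ symmetrically. Putting $w_1,\dots,w_4=Z-U_1,\dots,Z-U_4$ and $w_5,w_6,w_7=V_1-Z,V_2-Z,V_3-Z$, the identity $\sum_{i=1}^4 T_i=\sum_{j=1}^3 Q_j$ (both equal $\sum_k a_k$), equivalently $\sum_i U_i=\sum_j V_j$, yields $\sum_{k=1}^7 w_k=Z$, whence
$$g=\sum_{k=1}^7\Lambda(w_k)-\Lambda\Bigl(\sum_{k=1}^7 w_k\Bigr).$$
By periodicity this is a function on $(\RR/\pi\Z)^7$, and at a critical point $\Lambda'(w_k)=\Lambda'\bigl(\sum w\bigr)$ for all $k$, i.e. $|\sin w_k|=|\sin(\sum w)|$. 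Thus every $w_k$ equals $\theta$ or $\pi-\theta$ for one common $\theta\in[0,\pi/2]$; if exactly $m$ of them equal $\theta$ then $\sum w\equiv(2m-7)\theta\pmod\pi$, so $|\sin((2m-7)\theta)|=\sin\theta$, which forces $\theta$ to be an integer multiple of $\pi/6$ or of $\pi/8$ (for $m=3,4$ the condition holds identically, but then $g$ vanishes on the critical set). Going through the finitely many resulting configurations and evaluating $g$ with $\Lambda(\pi-x)=-\Lambda(x)$, the largest value occurs at the fully symmetric point $w_1=\dots=w_7=\pi/8$, where
$$g=7\Lambda(\tfrac\pi8)-\Lambda(\tfrac{7\pi}8)=8\Lambda(\tfrac\pi8).$$
That this beats all remaining critical values comes down to the explicit inequalities $\Lambda(\pi/8)>\tfrac34\Lambda(\pi/6)$ and $\Lambda(\pi/8)>\Lambda(\pi/4)$ (the competing symmetric configurations $w_k\equiv\pi/6$ and $w_k\equiv\pi/4$ giving $6\Lambda(\pi/6)$ and $8\Lambda(\pi/4)=v_8$), and $(\pi/8,\dots,\pi/8)$ is a genuine maximum because the Hessian of $g$ there is negative definite.

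\smallskip
\noindent I expect the $g$-computation to be the main obstacle. While its critical points reduce to a finite list via $|\sin w_k|=|\sin(\sum w)|$, checking that the symmetric point dominates every other configuration needs a moderately long (though elementary) case analysis together with a few sharp numerical comparisons of values of $\Lambda$ — above all $\Lambda(\pi/8)>\Lambda(\pi/4)$, which is precisely what makes the answer $8\Lambda(\pi/8)$ rather than $v_8$. These can be obtained by estimating the defining integral of $\Lambda$, or by bounding the tail of the Fourier series $\Lambda(x)=\tfrac12\sum_{n\ge1}\sin(2nx)/n^2$. The analysis for $v$ is of the same nature but much shorter, involving only three variables and eight candidate critical points.
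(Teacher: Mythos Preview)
Your proposal is correct and follows the same overall strategy as the paper: both functions are $\pi$-periodic and smooth, so the maximum is a critical point; the condition $\Lambda'(x)=\Lambda'(y)$ becomes $|\sin x|=|\sin y|$, which reduces the search to a finite list of candidates.

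For $v$ your trigonometric reduction to $\cos 2\alpha=\cos 2\beta\cos 2\gamma$ (and cyclic) is a pleasant variant of the paper's argument, which instead writes the gradient condition as a $3\times 4$ matrix equation with one-dimensional kernel, obtaining directly that the four arguments $\alpha+\beta+\gamma,\ \alpha+\beta-\gamma,\ \alpha+\gamma-\beta,\ \beta+\gamma-\alpha$ are pairwise $\pm$-equal mod $\pi$. Both routes land on the same eight points $\alpha,\beta,\gamma\in\{\pi/4,3\pi/4\}$.

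For $g$ your change of variables $w_k\in\{Z-U_i,\ V_j-Z\}$ with the observation $\sum_k w_k=Z$ (from $\sum_i T_i=\sum_j Q_j$) is a genuine simplification: it turns $g$ into the symmetric function $\sum_{k=1}^7\Lambda(w_k)-\Lambda(\sum_k w_k)$, whose critical points are instantly $|\sin w_k|=|\sin(\sum w)|$. The paper instead works in the original coordinates and identifies the kernel of a $7\times 8$ matrix to reach the equivalent statement $\Lambda'(Z)=\Lambda'(Z-U_i)=\Lambda'(V_j-Z)$. After that, both arguments reduce to the same finite comparison of values of $\Lambda$ at multiples of $\pi/6$ and $\pi/8$; the paper organizes this by the integer $n$ in $g=-n\Lambda(Z)$ with $-6\le n\le 8$ even, while you organize it by the count $m$ of $w_k$ equal to $\theta$. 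One small point worth making explicit in your write-up: the linear map $(Z,A_1,\dots,A_6)\mapsto(w_1,\dots,w_7)$ is an isomorphism (easily checked), so the unconstrained maximization in the $w$-variables is exactly the original problem and the maximizing point $w_k=\pi/8$ indeed comes from admissible data (e.g.\ $Z=7\pi/8$, all $A_i=\pi/2$).
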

\end{proof}
\subsection{$LTV$ and state-sums}
\label{sec:LTVstatesum}
Using state sums for the Turaev-Viro invariants and Proposition \ref{6jbound} we prove Theorem \ref{nbtetrahedrabound} stated in the Introduction.

\begin{named}{Theorem \ref{nbtetrahedrabound}}Suppose that $M$ is a compact, oriented manifold with a triangulation consisting of  $t$ tetrahedra. Then, we have
$$LTV(M)\leqslant  2.08\  v_8 \ t,$$
where $v_8\simeq  3.6638$ is the volume of  a regular ideal octahedron.
\end{named}
\begin{proof}
Let $\tau$ be a triangulation of $M$ with $t$ tetrahedra. Recall that by   equation (\ref{sumTV}) in the statement of Theorem 
 \ref{TVdef}
$$TV_r(M)=2^{b_2-b_0} \ \eta_r^{2|V|}\sum_{c\in A_r(\tau)}\prod_{e\in E}|e|_c\prod_{\Delta\in \tau}|\Delta|_c.$$

Since the term $2^{b_2-b_0} $ is independent of $r$ we may ignore it.
Recall that  $A_r(\tau)$ is the set of admissible $r$-colorings of the edges of the triangulation $\tau$.
The number of elements of the set $A_r(\tau)$ is bounded by a polynomial in $r$ as each edge must be colored by an element of $\lbrace 0, 2 ,\ldots r-3\rbrace$. So 
$$\frac{2\pi}{r}\log |TV_r(M)|\leqslant \frac{2\pi}{r} \log \left( |A_r(\tau)| |P| \right) \leqslant \frac{2\pi}{r}\log |P|+O(\frac{\log r}{r}),$$ 
 where $P$ is the term in the sum of maximal $\log.$
Moreover, 
$$\log |\eta_r|=\log \left|\frac{2\sin(\frac{2\pi}{r})}{\sqrt{r}}\right|=O(\log r),$$ 
and for the factors $|e|_c$ we have:
$$\log |e|_c=\log \left|\frac{\sin(\frac{2\pi (c(e)+1)}{r})}{\sin(\frac{2\pi}{r})}\right|=O(\log r).$$ 
Finally, by Proposition \ref{6jbound}, for any $c \in A_r(\tau)$ and any $\Delta \in \tau,$ the factor $\frac{2\pi}{r}\log |\Delta|_c$ is bounded above by $C_1=v_8+8 \Lambda \left( \frac{\pi}{8}\right)\simeq 7.5914< 7.6207 \simeq 2.08 v_8$ and $P$ has $t$ such factors. The theorem follows.
\end{proof}


\section{Cutting along tori}
\label{sec:cuttingtori}
In this section, we will prove a theorem (Theorem \ref{toruscutting} below) that describes the behavior of the Turaev-Viro invariants when cutting a 3-manifold along a torus. This will follow from the TQFT properties of $TV_r$ and Cauchy-Schwarz type inequalities. We will need the following lemma.
\begin{lemma}\label{Cauchyschwarz}Let $v_1,v_2,\ldots ,v_n$ be vectors in a $\mathbb{C}$-vector space $V$ with positive definite Hermitian form $\langle \cdot , \cdot \rangle$ and norm $|| \cdot ||$. Then we have
$$||\underset{i=1}{\overset{n}{\sum}} v_i ||^2 \leqslant n \underset{i=1}{\overset{n}{\sum}} ||v_i||^2.$$
\end{lemma}
\begin{proof}
Let $e_1,\ldots ,e_d$ be an orthonormal basis of $\mathrm{Span}(v_1,\ldots ,v_n)$ and write
$$v_i=\underset{j=1}{\overset{d}{\sum}} \lambda_{ij}e_j.$$
Then 
$$||\underset{i=1}{\overset{n}{\sum}} v_i ||^2=\underset{j=1}{\overset{d}{\sum}} |\underset{i=1}{\overset{n}{\sum}}\lambda_{ij}|^2\leqslant \underset{j=1}{\overset{d}{\sum}}n \underset{i=1}{\overset{n}{\sum}}|\lambda_{ij}|^2=n  \underset{i=1}{\overset{n}{\sum}} ||v_i||^2,$$
where the inequality follows from the Cauchy-Schwarz inequality in $\mathbb{C}^n.$
\end{proof}
\begin{theorem} \label{toruscutting}
Let $r \geqslant 3$ be an odd integer and let $M$ be a  compact oriented 3-manifold with empty or toroidal  boundary. Let   $T \subset M$ be  an embedded torus  and let $M'$ be the manifold obtained by cutting $M$ along $T.$ Then
$$TV_r(M)\leqslant \left(\frac{r-1}{2}\right) TV_r(M'),$$
and $$LTV(M) \leqslant LTV(M').$$
If moreover $T$ is separating then $TV_r(M)\leqslant TV_r(M')$.
\end{theorem}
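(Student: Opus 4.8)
The plan is to translate the statement into the Reshetikhin--Turaev invariants $RT_r$ of Theorem~\ref{TQFT} and then invoke Lemma~\ref{Cauchyschwarz}. The key preliminary observation is that for every compact oriented $N$ with empty or toroidal boundary one has
$$ TV_r(N) = \eta_r^{-\chi(N)}\, ||RT_r(N)||^2, $$
where $RT_r(N)\in V_r(\partial N)$ is the vector of Theorem~\ref{TQFT}(5) and $||\cdot||$ is the norm of the Hermitian form on $V_r(\partial N)$; this form is positive definite since $\partial N$ is a (possibly empty) union of tori, by Theorem~\ref{TQFTbasis}(1). Indeed, cutting the double $D(N)=N\cup_{\partial N}\overline N$ along its embedded copy of $\partial N$ recovers $N\sqcup\overline N$, so Theorem~\ref{TQFT}(6) gives $RT_r(D(N))=\langle RT_r(N),RT_r(\overline N)\rangle=\langle RT_r(N),\overline{RT_r(N)}\rangle=||RT_r(N)||^2$, using $RT_r(\overline N)=\overline{RT_r(N)}$ (a standard property of the TQFT of \cite{BHMV2}, extending Theorem~\ref{TQFT}(1) to manifolds with boundary); combining this with Theorem~\ref{RTdoubleTV} yields the displayed formula. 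Note also that, since $\chi(T^2)=0$, cutting along $T$ preserves Euler characteristic: $\chi(M')=\chi(M)$, and if $T$ separates then $\chi(M)=\chi(M_1')+\chi(M_2')$ for the two pieces $M_1',M_2'$ of $M'$.

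Next I would cut $M$ along $T$ and apply Theorem~\ref{TQFT}(6) with $\Sigma=T$; the two torus boundary components of $M'$ created by the cut carry opposite induced orientations, so they are a copy of $T$ and a copy of $\overline T$. Set $d:=\dim V_r(T^2)=\frac{r-1}{2}$ (Theorem~\ref{TQFTbasis}(1)) and fix a basis $e_1,\dots,e_d$ of $V_r(T)$ orthonormal for the Hermitian form, so that $\langle e_i,\overline{e_j}\rangle=\delta_{ij}$, where $\overline{e_1},\dots,\overline{e_d}$ is the conjugate basis of $V_r(\overline T)=\overline{V_r(T)}$. Assume first that $T$ is non-separating, so $M'$ is connected with $\partial M'=T\sqcup\overline T\sqcup\partial M$. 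Expanding $RT_r(M')=\sum_{i,j=1}^d v_{ij}\otimes e_i\otimes\overline{e_j}$ with $v_{ij}\in V_r(\partial M)$, Theorem~\ref{TQFT}(6) gives $RT_r(M)=\sum_{i=1}^d v_{ii}$, while $||RT_r(M')||^2=\sum_{i,j}||v_{ij}||^2$ because the $e_i\otimes\overline{e_j}$ are orthonormal in $V_r(T)\otimes V_r(\overline T)$. Lemma~\ref{Cauchyschwarz} then gives
$$ TV_r(M)=\eta_r^{-\chi(M)}\Big|\Big|\sum_{i=1}^d v_{ii}\Big|\Big|^2 \leqslant d\,\eta_r^{-\chi(M)}\sum_{i=1}^d ||v_{ii}||^2 \leqslant d\,\eta_r^{-\chi(M')}\sum_{i,j}||v_{ij}||^2 = \frac{r-1}{2}\,TV_r(M'). $$

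If instead $T$ is separating, write $M'=M_1'\sqcup M_2'$ with the copy of $T$ in $\partial M_1'$ and the copy of $\overline T$ in $\partial M_2'$, so that $RT_r(M')=RT_r(M_1')\otimes RT_r(M_2')$ by Theorem~\ref{TQFT}(5) and $TV_r(M')=TV_r(M_1')TV_r(M_2')$. Expanding $RT_r(M_1')=\sum_i a_i\otimes e_i$ and $RT_r(M_2')=\sum_j b_j\otimes\overline{e_j}$, the contraction of Theorem~\ref{TQFT}(6) gives $RT_r(M)=\sum_{i=1}^d a_i\otimes b_i$, and applying the Cauchy--Schwarz inequality coordinate-by-coordinate in orthonormal bases of the two remaining surface vector spaces shows $||\sum_i a_i\otimes b_i||^2\leqslant(\sum_i||a_i||^2)(\sum_i||b_i||^2)=||RT_r(M_1')||^2\,||RT_r(M_2')||^2$, whence $TV_r(M)\leqslant TV_r(M_1')TV_r(M_2')=TV_r(M')$. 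Finally, $LTV(M)\leqslant LTV(M')$ follows by taking $\frac{2\pi}{r}\log$ of $TV_r(M)\leqslant\frac{r-1}{2}TV_r(M')$ and passing to the $\limsup$, since $\frac{2\pi}{r}\log\frac{r-1}{2}\to 0$ as $r\to\infty$. I expect the main obstacle to be the first paragraph: tracking orientations carefully so that cutting along $T$ genuinely produces a $V_r(T)\otimes V_r(\overline T)$ pair contracted by the Hermitian form (rather than an abstract pairing), and pinning down positive-definiteness of that form on $V_r(\partial N)$ --- which is precisely where the toroidal-boundary hypothesis and Theorem~\ref{TQFTbasis}(1) are used. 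Once this is in place, the rest is Lemma~\ref{Cauchyschwarz} together with elementary linear algebra.
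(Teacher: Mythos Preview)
Your proof is correct and follows essentially the same approach as the paper: both translate to $RT_r$ via Theorem~\ref{RTdoubleTV}, expand in an orthonormal basis of the torus vector space (positive definite by Theorem~\ref{TQFTbasis}(1)), and apply Lemma~\ref{Cauchyschwarz} in the non-separating case and the ordinary Cauchy--Schwarz inequality in the separating case. Two cosmetic differences: you carry the factor $\eta_r^{-\chi(M)}$, which is harmless but unnecessary since $\chi(M)=0$ for any compact orientable $3$-manifold with empty or toroidal boundary; and in the separating case you expand along the $V_r(T)$-factor while the paper expands along the $V_r(\Sigma_1)$, $V_r(\Sigma_2)$-factors, which is a dual bookkeeping choice leading to the same inequality.
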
 

\begin{proof}  Let   $\Sigma=\partial M$.  We will distinguish two cases:
\vskip 0.05in
\noindent {\textit{Case 1:}} Suppose that the torus $T$ is non-separating. The torus $T$ inherits an orientation from that of $M$.
The manifold $M'$, obtained by cutting $M$ along $T$, has boundary  $\partial M'=\Sigma \coprod T \coprod \overline{T}$, where $\overline{T}$  is the torus $T$ with opposite orientation. 
With the notation of  Theorem \ref{TQFT}, we have
$$RT_r(M') \in V_r(\partial M')=V_r(\Sigma)\otimes V_r(T) \otimes V_r(\overline{T}) \ \ {\rm and}
\ \  RT_r(M) \in V_r(\Sigma).$$ Furthermore $RT_r(M)=\Phi(RT_r(M'))$,
where $\Phi$ is the 
the contraction map of Theorem \ref{TQFT}(6): 
$$\begin{array}{rccl}\Phi: & V_r(\Sigma)\otimes V_r(T )\otimes V_r(\overline{T}) & \longrightarrow& V_r(\Sigma)
\\ & v\otimes w_1 \otimes w_2 & \longrightarrow & \Phi(v\otimes w_1 \otimes w_2)=\langle w_1, w_2 \rangle  v.
\end{array}$$

By Theorem \ref{RTdoubleTV}, we have $$TV_r(M')=\langle RT_r(M'), RT_r(M') \rangle=||RT_r(M')||^2\ \ {\rm and } \ \
TV_r(M)=||RT_r(M)||^2.$$

By hypothesis, $\Sigma$ is a (possibly empty) union of, say $n$,  tori; thus we have $V_r(\Sigma)=V_r(T^2)^{\otimes n}$
and $V_r(\partial M')=V_r(T^2)^{\otimes {(n+2)}}$.
 By Theorem \ref{TQFTbasis}  the Hermitian form on  $V_r(T^2)$  is definite positive. Hence, setting   $m=\frac{r-1}{2}$,  we have an  orthonormal basis 
 $\left( \varphi_i \right)_{1\leqslant j \leqslant m}$ on $V_r(T^2)$.
Using this basis we can write
 $$RT_r(M')=\underset{1\leqslant i,j \leqslant m}{\sum} v_{ij}\otimes \varphi_i \otimes \varphi_j,$$
where the $v_{ij}$ are vectors in $V_r(\Sigma)$ which is also an Hermitian vector space with definite positive Hermitian form.
 We have that 
\begin{equation}\label{eq:TV-M'}TV_r(M')=||RT_r(M')||^2=\underset{1\leqslant i,j \leqslant m}{\sum} ||v_{ij}||^2
\end{equation}
as $\varphi_i \otimes \varphi_j$ is an orthonormal basis of $V_r(T) \otimes V_r({\overline {T}})$.
On the other hand, applying the contraction map $\Phi,$ we get:
$$RT_r(M)=\Phi(RT_r(M'))=\underset{1\leqslant i,j \leqslant m}{\sum} \langle \varphi_i,\varphi_j \rangle v_{ij}=\underset{1\leqslant i \leqslant m}{\sum} v_{ii},$$
as $\varphi_i$ is an orthonormal basis of $V_r(T)$. Thus 
$$TV_r(M)=||RT_r(M)||^2=\left| \left| \underset{1\leqslant i \leqslant m}{\sum} v_{ii} \right| \right|^2 \leqslant m \underset{1\leqslant i \leqslant m}{\sum} ||v_{ii}||^2 \leqslant m \underset{1\leqslant i,j \leqslant m}{\sum} ||v_{ij}||^2 = m TV_r(M'),$$
where the first inequality follows from Lemma \ref{Cauchyschwarz} and the last equality from Equation (\ref{eq:TV-M'}). This proves the first part of Theorem \ref{toruscutting}.
\vskip 0.08in

\noindent  { \textit{Case 2:}} Let $T$ be a separating torus and let $M_1$ and $M_2$ be the two components of $M\setminus T.$ Let us write $\partial M_1=T\cup \Sigma_1$ and $\partial M_2=\overline{T}\cup \Sigma_2,$ where $\Sigma_1$ and $\Sigma_2$ are actually (possibly empty) unions of tori.  By Theorem \ref{TQFTbasis} the natural Hermitian form on  $V_r(\Sigma_1)$ and $V_r(\Sigma_2),$ 
are  positive definite.
Hence we have orthonormal bases  $\left( \varphi_i \right)_{ i}$ and $\left( \psi_j \right)_{j}$ of $V_r(\Sigma_1)$ and $V_r(\Sigma_2),$  respectively.
We can write:
$$RT_r(M_1)=\sum_i v_i \otimes \varphi_i,$$
where the $v_i$ are vectors in $V_r(T)$ and 
$$RT_r(M_2)=\sum_j w_j \otimes \psi_j,$$
where the $w_j$ are vectors in $V_r(\overline{T})$.
From this we get $$TV_r(M_1)=||RT_r(M_1)||^2=\sum_i ||v_i||^2,$$ and likewise $$TV_r(M_2)=||RT_r(M_2)||^2=\sum_j ||w_j||^2.$$
On the other hand, one has 
\begin{multline*}TV_r(M)=||RT_r(M)||^2=||\sum_{i,j} \langle v_i, w_j \rangle \varphi_i \otimes \psi_j ||^2=\sum_{i,j} |\langle v_i, w_j \rangle |^2
\\ \leqslant \sum_{i,j} ||v_i||^2 ||w_j||^2=TV_r(M_1)TV_r(M_2),
\end{multline*}
by the Cauchy-Schwarz inequality.
\end{proof}
\vskip 0.04in

As a special case of this theorem we get the following.
\begin{corollary}\label{dehnfilling}Let $M'$ be a compact oriented 3-manifold with non-empty  toroidal boundary and let $M$ be a manifold obtained from $M'$ by Dehn-filling some of the
boundary  components.
Then
$$TV_r(M)\leqslant TV_r(M'),$$
and thus $$LTV(M)\leqslant LTV(M').$$
\end{corollary}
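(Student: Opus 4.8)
The plan is to realize Dehn filling as cutting along a torus and then invoke Theorem~\ref{toruscutting}. Recall that filling a toroidal boundary component $T_0$ of $M'$ amounts to gluing a solid torus $V\cong D^2\times S^1$ to $M'$ along $T_0$ via a homeomorphism $\partial V\to T_0$. So if we fill $k$ boundary components, $M$ is obtained from $M'$ by gluing $k$ solid tori $V_1,\dots,V_k$ along $k$ of the boundary tori of $M'$. Inside $M$, the image $T_i\subset M$ of $\partial V_i$ is an embedded \emph{separating} torus: cutting $M$ along $T_1$ produces a disjoint union $M_1\sqcup V_1$, where $M_1$ is $M'$ with the remaining solid tori $V_2,\dots,V_k$ still attached. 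Each $T_j$ with $j\neq 1$ lies in the component $M_1$ (not in the detached $V_1$) and remains separating there, so one can proceed inductively.

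Next I would combine the separating case of Theorem~\ref{toruscutting} with multiplicativity of $TV_r$ under disjoint union. The latter is immediate from the state sum~(\ref{sumTV}): the quantities $b_2$, $b_0$, $|V|$, the admissible coloring set $A_r(\tau)$, and the edge and tetrahedron weights all behave multiplicatively under disjoint union of triangulated manifolds. Cutting along $T_1$ gives $TV_r(M)\leqslant TV_r(V_1)\,TV_r(M_1)$; iterating along $T_2,\dots,T_k$ yields $TV_r(M)\leqslant TV_r(M')\prod_{i=1}^{k}TV_r(V_i)$. It then remains to compute $TV_r$ of a solid torus, for which I would use Theorem~\ref{RTdoubleTV}: since $\chi(V_i)=\chi(D^2\times S^1)=0$ and the double $D(V_i)$ is $S^2\times S^1$, we get $TV_r(V_i)=\eta_r^{-\chi(V_i)}RT_r(D(V_i))=RT_r(S^2\times S^1)=1$ by Theorem~\ref{TQFT}(2). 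Hence $TV_r(M)\leqslant TV_r(M')$.

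Finally, since $\eta_r>0$ for odd $r\geqslant 3$ and the Hermitian form on $V_r(\Sigma)$ is positive definite when $\Sigma$ is a union of tori (Theorem~\ref{TQFTbasis}), all the Turaev--Viro invariants involved are nonnegative real numbers, so $|TV_r(M)|=TV_r(M)$; applying $\limsup_{r\to\infty}\frac{2\pi}{r}\log(\,\cdot\,)$ to both sides of $TV_r(M)\leqslant TV_r(M')$ then gives $LTV(M)\leqslant LTV(M')$.

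I do not expect a serious obstacle here: the only points needing any care are (i) verifying that after each cut the next gluing torus still separates, so that the easy (product) case of Theorem~\ref{toruscutting} applies at every step, and (ii) the identification $D(\text{solid torus})=S^2\times S^1$ together with $\chi(\text{solid torus})=0$, giving $TV_r(\text{solid torus})=1$. Both are routine. One could alternatively cut $M$ along the disjoint union $T_1\cup\cdots\cup T_k$ all at once, but iterating the one-torus statement is cleanest.
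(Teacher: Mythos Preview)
Your proposal is correct and follows essentially the same route as the paper: cut along the separating gluing tori one at a time, apply the separating case of Theorem~\ref{toruscutting} together with multiplicativity under disjoint union, and use $TV_r(D^2\times S^1)=RT_r(S^2\times S^1)=1$. Your write-up is in fact a bit more explicit than the paper's on the separating property, the computation of $TV_r(\text{solid torus})$ via Theorem~\ref{RTdoubleTV}, and the positivity justifying the passage to $LTV$, but the argument is the same.
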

\begin{proof} Suppose we are  Dehn filling $n$ components of $\partial M'.$
The Dehn filling $M$ is obtained from $M'\coprod_{i=1}^n V_i$ by gluing the boundary of each solid torus $V_i=D^2\times S^1$  to a boundary component of $M'$.
We can do the Dehn filling one component at a time.
Thus Corollary \ref{dehnfilling} is an immediate consequence of Theorem \ref{toruscutting}, the multiplicativity of $TV_r$ under disjoint union and the fact that 
$$TV_r(D^2\times S^1)=RT_r(S^2\times S^1)=1.$$
\end{proof}


\section{Bounds for  Seifert manifolds}
\label{sec:TVSeifert}
The previous section showed that the Turaev-Viro invariants are well behaved with respect to 3-manifold decompositions along tori. In this section  we deal with large $r$ asymptotic behavior
of Turaev-Viro invariants for Seifert manifolds.
We will use Corollary \ref{dehnfilling} to show the $TV_r$ invariants of Seifert manifolds are at most polynomially growing.
Our argument will be slightly different depending on whether the Seifert manifold has orientable or non-orientable base. The following lemma will help us reduce the latter to the former.

\begin{lemma}\label{nonorsurfacecut}Let $\tilde{\Sigma}$ be a  compact non-orientable surface. Then there is a simple closed curve $\gamma$ on $\tilde{\Sigma}$ that is orientation reversing such that the surface $\Sigma=\tilde{\Sigma} \setminus \gamma$ obtained from $\tilde{\Sigma}$ by cutting along $\gamma$ is orientable.
\end{lemma}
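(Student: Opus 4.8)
The plan is to exploit the classification of compact non-orientable surfaces and to reduce to a single model case. Recall that any compact non-orientable surface $\tilde\Sigma$ (possibly with boundary) is obtained from a compact orientable surface by attaching $k \geqslant 1$ cross-caps; equivalently, $\tilde\Sigma$ contains an embedded M\"obius band $B$. First I would take $\gamma$ to be the core circle of such an embedded M\"obius band $B \subset \tilde\Sigma$. Since a neighborhood of $\gamma$ is $B$, a regular neighborhood of $\gamma$ is a M\"obius band, which means precisely that $\gamma$ is orientation-reversing (the normal bundle of $\gamma$ in $\tilde\Sigma$ is nontrivial).

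Next I would analyze the surface $\Sigma = \tilde\Sigma \setminus \gamma$ obtained by cutting along $\gamma$. Cutting $\tilde\Sigma$ along the core of the M\"obius band $B$ replaces $B$ by an annulus (the boundary of the M\"obius band, which is a single circle, becomes one boundary circle of $\Sigma$; concretely, cutting a M\"obius band along its core yields an annulus). Thus $\Sigma$ is obtained from $\tilde\Sigma$ by removing one cross-cap, and one has $\tilde\Sigma = \Sigma / (\text{antipodal identification on one boundary circle})$, or more precisely $\tilde\Sigma$ is recovered from $\Sigma$ by gluing a M\"obius band along that new boundary circle. Iterating is not needed: I only need to check orientability of $\Sigma$ for the right choice of $\gamma$.

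The cleanest way to guarantee $\Sigma$ is orientable is to choose $\gamma$ so that cutting removes \emph{all} the non-orientability at once. Write $\tilde\Sigma = N_k \setminus (\text{open disks})$ where $N_k$ is the closed non-orientable surface of genus $k$, i.e. the connected sum of $k$ projective planes. If $k$ is odd, $N_k = N_1 \# \Sigma_{(k-1)/2} = \RR P^2 \# \Sigma_{(k-1)/2}$, and cutting along the core $\gamma$ of the M\"obius band coming from the single $\RR P^2$ summand leaves $\Sigma_{(k-1)/2}$ minus a disk, which is orientable. If $k$ is even, $N_k = N_2 \# \Sigma_{(k-2)/2} = (\text{Klein bottle}) \# \Sigma_{(k-2)/2}$; inside the Klein bottle there is a one-sided curve $\gamma$ (a core of one of the two M\"obius bands in the standard Klein-bottle decomposition) whose complement is an annulus, so cutting $N_k$ along this $\gamma$ gives $\Sigma_{(k-2)/2}$ with an annulus attached, again orientable. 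In both cases the same $\gamma$ (now regarded as a curve on $\tilde\Sigma$, disjoint from the removed disks) does the job, and it is orientation-reversing by the M\"obius-band-neighborhood criterion above.

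The only mild subtlety — and the step I'd expect to need the most care — is verifying that $\gamma$ can be taken to be a single \emph{simple} closed curve disjoint from $\partial\tilde\Sigma$ realizing the above picture, and that ``cutting along $\gamma$'' in the sense used later in the paper (forming $\tilde\Sigma \setminus \gamma$ with two new boundary arcs identified appropriately, here a single new boundary circle since $\gamma$ is one-sided) genuinely yields the orientable surface described. This is standard surface topology: represent $\tilde\Sigma$ by a polygon with edge identifications, locate the cross-cap word $xx$, and take $\gamma$ to be the corresponding simple closed curve; its regular neighborhood is visibly a M\"obius band, and excising that neighborhood's core from the identification scheme removes exactly one relation of the form $xx$, reducing the non-orientable genus by one (odd $k$) or replacing the obstruction by an orientable handle (even $k$). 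Once $\gamma$ is exhibited concretely this way, both the one-sidedness of $\gamma$ and the orientability of $\Sigma$ are immediate from the resulting polygonal presentation, completing the proof.
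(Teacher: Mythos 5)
Your odd-genus case is fine and coincides with the paper's treatment of the $\mathbb{R}P^2$ summand (the paper also first caps off $\partial\tilde{\Sigma}$ with disks and then isotopes $\gamma$ off the filling disks, exactly as you propose). The even-genus case, however, contains a genuine error. Write the Klein bottle as $K^2=B_1\cup_{\partial}B_2$, a union of two M\"obius bands, and let $\gamma$ be the core of $B_1$. Cutting $K^2$ along $\gamma$ is the same as deleting an open regular neighborhood of $\gamma$, i.e.\ the interior of $B_1$; what remains is the \emph{other} M\"obius band $B_2$, not an annulus. Consequently your curve in $N_k=K^2\#\Sigma_{(k-2)/2}$ produces, after cutting, a M\"obius band glued along its boundary circle to $\Sigma_{(k-2)/2}$ minus a disk, which is non-orientable; the argument therefore fails for even non-orientable genus.

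Moreover, this step cannot be repaired while insisting that $\gamma$ be orientation reversing (one-sided). Cutting along a circle does not change $\chi$; a one-sided curve cannot separate (a separating curve is two-sided) and its regular neighborhood is a M\"obius band, so the cut surface is connected and has exactly one more boundary circle than $\tilde{\Sigma}$. If $\tilde{\Sigma}$ has non-orientable genus $k$ and $b$ boundary circles, then $\chi(\tilde{\Sigma})=2-k-b$, while a connected orientable surface of genus $g$ with $b+1$ boundary circles has $\chi=1-2g-b$; equating the two forces $k=2g+1$, which is odd. Hence for even $k$ no orientation-reversing simple closed curve has orientable cut surface, and any correct argument must use a two-sided curve in that case. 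This is what the paper's own proof in effect does: in its model $K^2\simeq S^1\times [0,1]_{/(x,1)\sim (-x,0)}$ the chosen curve $S^1\times\lbrace 0\rbrace$ is a fiber of the circle fibration $[x,t]\mapsto t$, so it has an annulus neighborhood (it is two-sided rather than orientation reversing), and cutting along it yields the annulus $S^1\times[0,1]$. So your $\mathbb{R}P^2$ case agrees with the paper, but in the $K^2$ case your one-sided core must be replaced by this two-sided fiber; correspondingly, in the later application (Theorem \ref{TVSeifert}) the fibered neighborhood of $\gamma$ is then $T^2\times[0,1]$ rather than the twisted $I$-bundle over $K^2$ when the non-orientable genus of the base is even.
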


\begin{proof}
Without loss of generality, we can assume $\tilde{\Sigma}$ is closed. Otherwise, we can fill the boundary components by disks to get a closed surface $\tilde{\Sigma}',$ and a simple closed curve  for $\tilde{\Sigma}'$ that cuts it into an orientable surface.  Isotopying this curve away from the filled in disks on  $\tilde{\Sigma}'$ we will get a curve that satisfies the conclusion of the lemma for $\tilde{\Sigma}.$ 

Now, as closed non-orientable surfaces are characterized by their Euler characteristic which is at most $1,$ the surface $\tilde{\Sigma}$ is homeomorphic either to some $\mathbb{R}P^2 \# (\# T^2)^p$ or to some $K^2 \# (\# T^2)^p$ where $K^2$ is the Klein bottle and $p\geqslant 0.$ As $T^2$ is orientable, it will then be sufficient to find such a path in $\mathbb{R}P^2$ and $K^2.$
 In $\mathbb{R}P^2$ such a path is given by the  image of the equator  in $S^2$ by the cover map.
If we view the Klein bottle $K^2$ as
$$K^2\simeq S^1\times [0,1]_{/(x,1)\sim (-x,0)},$$
then the path $S^1\times\lbrace 0 \rbrace$ works.
\end{proof}

We are now ready to bound the Turaev-Viro invariants of Seifert fibered manifolds.
\begin{theorem}\label{TVSeifert}Let $M$ be a compact orientable manifold that is Seifert fibered.
Then there exist constants $A>0$ and $N>0$, depending on $M$, such that $$TV_r(M,e^{\frac{2i\pi}{r}})\leqslant Ar^N.$$ Thus we have $LTV(M)\leq 0$.
\end{theorem}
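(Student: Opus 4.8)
The strategy is to realize any Seifert fibered $M$ as a Dehn filling of a simpler Seifert manifold whose base is an orientable surface with boundary, and to compute the Turaev–Viro invariant of that simpler piece directly via TQFT, controlling its growth in $r$. First I would reduce to the case of orientable base: if the base orbifold of $M$ has non-orientable underlying surface $\tilde\Sigma$, apply Lemma \ref{nonorsurfacecut} to find an orientation-reversing simple closed curve $\gamma \subset \tilde\Sigma$ avoiding the cone points such that $\Sigma = \tilde\Sigma \setminus\!\!\setminus \gamma$ is orientable. The preimage of $\gamma$ under the Seifert fibration is an embedded torus (or Klein bottle, but one can arrange a torus) $T\subset M$; cutting $M$ along $T$ gives a Seifert manifold $M'$ with orientable base, and by Theorem \ref{toruscutting} we have $LTV(M)\leqslant LTV(M')$ and $TV_r(M)\leqslant \frac{r-1}{2}TV_r(M')$, so it suffices to bound $TV_r(M')$ polynomially.

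Next, assuming the base of $M$ is an orientable surface $\Sigma_{g}$ (closed or with boundary) with exceptional fibers of orders $\alpha_1,\dots,\alpha_k$, I would exhibit $M$ as obtained by Dehn filling from $N = \Sigma_{g,b+k}\times S^1$ for suitable $b$ — that is, from a trivial circle bundle over a surface with boundary. Concretely: drill out fibered solid torus neighborhoods of all the exceptional fibers and of one regular fiber (to absorb the Euler number), obtaining a trivial $S^1$-bundle over a punctured surface; $M$ is recovered by Dehn filling those boundary tori. By Corollary \ref{dehnfilling}, $TV_r(M)\leqslant TV_r(N)$, so it is enough to bound $TV_r(\Sigma_{g,n}\times S^1)$ for $\Sigma_{g,n}$ a compact orientable surface. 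Here Theorem \ref{RTdoubleTV} gives $TV_r(\Sigma_{g,n}\times S^1) = \eta_r^{-\chi}RT_r(D(\Sigma_{g,n}\times S^1), e^{i\pi/r})$, and the double of $\Sigma_{g,n}\times S^1$ is $D(\Sigma_{g,n})\times S^1$, a closed circle bundle over the closed surface $\Sigma_{g'}:=D(\Sigma_{g,n})$. By the standard TQFT computation for mapping tori, $RT_r(\Sigma_{g'}\times S^1) = \mathrm{Tr}(\rho_r(\mathrm{id})) = \dim V_r(\Sigma_{g'})$, which by Theorem \ref{TQFTbasis} is a polynomial in $r$ of degree $3g'-3$ (and equals $\frac{r-1}{2}$ or $1$ in the low-genus cases). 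Since $\eta_r^{-\chi}$ is at most polynomial in $r$, this yields $TV_r(\Sigma_{g,n}\times S^1)\leqslant Ar^N$, hence $TV_r(M)\leqslant (\tfrac{r-1}{2})\cdot Ar^N$, a polynomial bound; taking $\frac{2\pi}{r}\log$ and $\limsup$ gives $LTV(M)\leqslant 0$.

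The main obstacle I expect is the bookkeeping in the middle step — correctly presenting an arbitrary Seifert manifold (including the Euler number contribution and the case where $\partial M\neq\emptyset$, where some boundary tori of $M$ are simply not filled) as a Dehn filling of a product $\Sigma_{g,n}\times S^1$, and making sure the curve $\gamma$ from Lemma \ref{nonorsurfacecut} can be chosen disjoint from the exceptional fibers so that $T=p^{-1}(\gamma)$ is genuinely an incompressible torus rather than a Klein bottle. A minor additional subtlety is that when the Seifert fibration has non-orientable total space issues or when $M$ itself is a connected sum component like $S^1\times S^2$ or a lens space, one should check the degenerate cases separately; but these all have $TV_r$ of at most polynomial growth by direct computation (e.g. $TV_r(S^1\times S^2)=RT_r(S^1\times S^2\ \#\ \overline{S^1\times S^2})\cdot\eta_r^{-\chi}$, which is polynomial), so they cause no real trouble. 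Everything else is a routine application of the already-established multiplicativity properties, Theorem \ref{toruscutting}, Corollary \ref{dehnfilling}, and the dimension count of Theorem \ref{TQFTbasis}.
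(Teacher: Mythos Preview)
Your orientable-base argument matches the paper's Case~1 almost verbatim: drill out the exceptional fibers and one regular fiber to reach $\Sigma_{g,n}\times S^1$, apply Corollary~\ref{dehnfilling}, double, and read off $\dim V_r(\Sigma_{2g+n-1})$ via Theorem~\ref{TQFTbasis}. (Incidentally $\chi(\Sigma_{g,n}\times S^1)=0$, so the $\eta_r^{-\chi}$ factor you carry is just $1$.)

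The non-orientable case, however, has a genuine gap, and it is exactly the point you flag but then hope to finesse. If $M$ is an \emph{orientable} $3$-manifold Seifert fibered over a non-orientable base and $\gamma$ is an orientation-reversing simple closed curve in the base, then $p^{-1}(\gamma)$ is \emph{necessarily} a Klein bottle, never a torus: orientability of the total space forces the fiber orientation to reverse along $\gamma$, so the restricted $S^1$-bundle over $\gamma$ has nontrivial monodromy. No choice of $\gamma$ (and no avoidance of exceptional fibers) changes this, so Theorem~\ref{toruscutting} does not apply to $p^{-1}(\gamma)$, and your inequality $TV_r(M)\leqslant \tfrac{r-1}{2}TV_r(M')$ is unjustified as written.

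The paper's fix is to cut instead along the \emph{torus} $\partial\bigl(K^2\tilde\times I\bigr)$ bounding a regular neighborhood of this Klein bottle; this torus is separating, so Theorem~\ref{toruscutting} gives $TV_r(M)\leqslant TV_r(M')\,TV_r(K^2\tilde\times I)$ with $M'$ fibered over the orientable surface $\Sigma=\tilde\Sigma\setminus\gamma$. The extra piece is then computed directly: $D(K^2\tilde\times I)$ is the mapping torus of the elliptic involution on $T^2$, so Lemma~\ref{ellipticinvol} yields $TV_r(K^2\tilde\times I)=\Tr\bigl(\rho_r(s)\bigr)=\dim V_r(T^2)=\tfrac{r-1}{2}$. (An alternative patch, closer in spirit to your reduction, is to note that $K^2\tilde\times I$ also admits a Seifert fibration over a disk with two cone points of order~$2$, hence over an orientable base, so Case~1 already covers it.) With this correction your outline becomes the paper's proof.
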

\begin{proof} We treat the case of orientable and non-orientable base separately. 
\vskip 0.08in

\noindent {\textit{Case 1:} } Let us first assume that the base of $M$ is orientable. Drilling out exceptional fibers and possibly one regular fiber, we obtain a Seifert manifold which is a  locally trivial fiber bundle over an oriented surface with non-empty boundary $\Sigma_{g,n}$. Here,  $n \geqslant 1$ is the number of boundary components of $\Sigma_{g,n}$ and $g$ is the genus.
Because $H^2(\Sigma_{g,n},\mathbb{Z})=0$, the Euler number of the fibration is zero  and the $S^1$-fibration is globally trivial. In the end, $M$ is a Dehn-filling of $\Sigma_{g,n}\times S^1$ for some oriented surface 
$\Sigma_{g,n}$ of genus $g$ and $n \geqslant 1$ boundary components.  

By Corollary \ref{dehnfilling}, we have $TV_r(M)\leqslant TV_r(\Sigma_{g,n}\times [0,1])$.
It remains to show that $TV_r(\Sigma_{g,n}\times S^1)$ is bounded by a polynomial. 
 By Theorem \ref{RTdoubleTV}, we have that
$$TV_r(\Sigma_{g,n}\times S^1)=RT_r(D(\Sigma_{g,n}\times S^1))=RT_r(D(\Sigma_{g,n})\times S^1).$$ 
The double surface $D(\Sigma_{g,n})$ is a closed orientable surface with $\chi(D(\Sigma_{g,n}))=2\chi(\Sigma_{g,n})=4-4g-2n;$ thus it is the surface $\Sigma_{2g+n-1}$ of genus $2g+n-1.$
So that we have
$$TV_r(\Sigma_{g,n}\times S^1)=RT_r(\Sigma_{2g+n-1}\times S^1)=\mathrm{Tr}(\rho_r(id_{\Sigma_{2g+n-1}}))=\mathrm{dim}(V_r(\Sigma_{2g+n-1})),$$
where $\rho_r$ is the quantum representation of $V_r(\Sigma_{2g+n-1})$. The last quantity is a polynomial by Theorem \ref{TQFTbasis}(2).
\vskip 0.08in

\noindent { \textit{Case 2:} } Assume that the base of $M$ is a non-orientable surface $\tilde{\Sigma}$. By Lemma \ref{nonorsurfacecut},  we have a simple closed curve $\gamma$ on $\tilde{\Sigma}$ such that $\Sigma=\tilde{\Sigma}\setminus \gamma$ is orientable. One can assume that $\gamma$ does not meet any exceptional fiber up to isotopying $\gamma$.  The fibers of $M$ corresponding to points on  $\gamma$ form an embedded Klein bottle $K^2$ in $M$. As $\gamma$ is orientation reversing and does not meet exceptional fibers, a regular neighborhood of $\gamma$ in $\tilde{\Sigma}$ will be a M\"obius band that does not meet the exceptional fibers, and its total space by the Seifert fibration will be homeomorphic to the twisted $I$-bundle over the $K^2$.
$$K^2\tilde{\times}I=[0,1]\times S^1 \times [-1,1]_{/(1,y,z)\sim (0,-y,-z)}.$$
The boundary of $K^2\tilde{\times}I $ is a separating torus in $M$ and, cutting $M$ along this torus, one obtains on one side a Seifert manifold $M'$ that fibers over the orientable surface $\Sigma$ and $K^2\tilde{\times}I $ on the other side. 

By Theorem \ref{toruscutting}, $TV_r(M)\leqslant TV_r(M') TV_r(K^2\tilde{\times}I)$.

We already know that $TV_r(M')$ is bounded by a polynomial so we only need to discuss  $TV_r(K^2\tilde{\times}I)$.
By Theorem \ref{RTdoubleTV} again, we have that $TV_r(K^2\tilde{\times}I)=RT_r(D(K^2\tilde{\times}I))$. The double of $K^2\tilde{\times}I$ is: 
$$D(K^2\tilde{\times}I)=[0,1]\times S^1 \times S^1_{/(1,y,z)\sim (0,-y,-z)}.$$
This is the mapping torus of the elliptic involution $s$ over $T^2$.
Thus we have that $$TV_r(K^2 \tilde{\times}I)=\mathrm{Tr}(\rho_r(s))=\mathrm{dim}(V_r(T^2))=\frac{r-1}{2},$$
as the elliptic involution is in the kernel of all quantum representations $\rho_r$ by Lemma \ref{ellipticinvol}.
\end{proof}


\section{Turaev-Viro invariants and simplicial volume} 
\label{sec:LTVbound}
\subsection{A universal bound}
In this section we complete the proof of Theorem \ref{LTVbound} and deduce some corollaries. 
 First we note the following elementary properties of $LTV$ and $lTV$.

\begin{proposition}\label{LTVconnectedsum}  $LTV$ is subadditive under disjoint unions and connected sums of 3-manifolds while
$lTV$ is superadditive.

\end{proposition}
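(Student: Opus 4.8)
The plan is to use the multiplicativity properties of $RT_r$ recorded in Theorem \ref{TQFT} together with the identification $TV_r(M) = \eta_r^{-\chi(M)} RT_r(D(M))$ of Theorem \ref{RTdoubleTV}, and then take $\limsup$ resp.\ $\liminf$ of $\frac{2\pi}{r}\log|\cdot|$. For disjoint unions the key point is that $D(M_1 \sqcup M_2) = D(M_1) \sqcup D(M_2)$ and $RT_r$ is multiplicative under disjoint union, so $TV_r(M_1 \sqcup M_2) = TV_r(M_1)\,TV_r(M_2)$ on the nose (the Euler characteristic is additive, so the $\eta_r$ powers match up). For connected sums, $D(M_1 \# M_2)$ is $D(M_1) \# D(M_2)$ (doubling commutes with connected sum along the sphere), so by Theorem \ref{TQFT}(3), $RT_r(D(M_1 \# M_2)) = \eta_r^{-1} RT_r(D(M_1))\, RT_r(D(M_2))$, and tracking the Euler characteristic term (here one uses $\chi(M_1 \# M_2) = \chi(M_1) + \chi(M_2) - \chi(S^3) \cdot(\text{something})$ depending on conventions for manifolds with boundary; in any case $\chi$ changes by a bounded, $r$-independent amount) one gets $TV_r(M_1 \# M_2) = \eta_r^{\pm c}\, TV_r(M_1)\, TV_r(M_2)$ for a fixed integer $c$.

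The subadditivity of $LTV$ then follows from the general fact that $\limsup (x_r + y_r) \leqslant \limsup x_r + \limsup y_r$, applied to $x_r = \frac{2\pi}{r}\log|TV_r(M_1)|$ and $y_r = \frac{2\pi}{r}\log|TV_r(M_2)|$, since $\log|TV_r(M_1 \# M_2)| = \log|TV_r(M_1)| + \log|TV_r(M_2)| + O(\log r)$ and the $O(\log r)$ term contributes $0$ after multiplication by $\frac{2\pi}{r}$ and passage to the limit. Dually, $\liminf(x_r + y_r) \geqslant \liminf x_r + \liminf y_r$ gives the superadditivity of $lTV$. The disjoint union case is identical (indeed cleaner, with no correction term at all).

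I would organize the write-up as: first state the two factorization identities $TV_r(M_1 \sqcup M_2) = TV_r(M_1) TV_r(M_2)$ and $TV_r(M_1 \# M_2) = \eta_r^{\pm c} TV_r(M_1) TV_r(M_2)$, deriving each from Theorem \ref{RTdoubleTV} and Theorem \ref{TQFT}; then take logs, multiply by $\frac{2\pi}{r}$, and invoke the elementary subadditivity/superadditivity of $\limsup$/$\liminf$. The only mildly delicate point is the bookkeeping of Euler characteristics and $\eta_r$-powers in the connected-sum case, and checking that $D(M_1 \# M_2) = D(M_1) \# D(M_2)$ when the $M_i$ have boundary — this is where I expect the main (though still routine) care to be needed, since the connected sum is taken in the interior and the doubling is along the toroidal boundary, so one must verify these operations genuinely commute. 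Since $|\eta_r| = \frac{2\sin(2\pi/r)}{\sqrt r} \to 0$ but $\frac{2\pi}{r}\log|\eta_r|^{\pm c} \to 0$, any fixed power of $\eta_r$ is invisible to $LTV$ and $lTV$, so the correction term causes no trouble in the end.
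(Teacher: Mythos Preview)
Your approach is essentially the same as the paper's: derive the factorization $TV_r(M_1\sqcup M_2)=TV_r(M_1)TV_r(M_2)$ and $TV_r(M_1\# M_2)=\eta_r^{-2}TV_r(M_1)TV_r(M_2)$, take logs, and use sub-/superadditivity of $\limsup$/$\liminf$ together with $\frac{2\pi}{r}\log|\eta_r|=O(\frac{\log r}{r})\to 0$.

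One correction: the paper simply quotes the connected-sum formula for $TV_r$ directly (it is an immediate consequence of the TQFT axioms, or of the state-sum definition), whereas you propose to derive it via Theorem~\ref{RTdoubleTV} and the claim $D(M_1\# M_2)=D(M_1)\# D(M_2)$. That identity is \emph{not} correct when the $M_i$ have nonempty boundary: doubling the connected sum produces \emph{two} separating $2$-spheres, and one finds instead
\[
D(M_1\# M_2)\;\cong\; D(M_1)\,\#\, D(M_2)\,\#\,(S^1\times S^2).
\]
(For closed $M_i$ the double is a disjoint union, and the identity fails for a different reason.) Fortunately this does not damage your conclusion, since $RT_r(S^1\times S^2)=1$, so the extra summand is invisible and you still land on $TV_r(M_1\# M_2)=\eta_r^{-2}TV_r(M_1)TV_r(M_2)$. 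But it is simpler, as the paper does, to invoke the $TV_r$ connected-sum formula directly rather than route through the doubles.
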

\begin{proof}
By  Theorems \ref{TQFT} and \ref{RTdoubleTV},  the Turaev-Viro invariants are multiplicative under  disjoint union.
Thus we have $TV_r(M\coprod M')=TV_r(M)TV_r(M')$ and
\begin{multline*}\underset{r \rightarrow \infty}{\limsup} \frac{2\pi}{r}\log |TV_r(M\coprod M')|=\underset{r \rightarrow \infty}{\limsup} \frac{2\pi}{r}\left( \log |TV_r(M)|+\log |TV_r(M') |\right)\\ \leqslant LTV(M)+LTV(M')
\end{multline*}
as the $\limsup$ operator is subadditive.
\\ For a connected sum, we have 
$$TV_r(M\# M')=\frac{TV_r(M)TV_r(M')}{TV_r(S^3)}=\eta_r^{-2}TV_r(M)TV_r(M').$$
But we have 
$$\frac{\log |\eta_r|}{r}=\frac{\log |\frac{2\sin(\frac{2\pi}{r})}{\sqrt{r}}|}{r}=O(\frac{\log r}{r}).$$
So subadditivity of $TV_r$ under connected sum follows again from the subadditivity of $\limsup.$
The claims about $lTV$ follow similarly as $\liminf$ is superadditive.
\end{proof}
\vskip 0.08in

 We are now ready to finish the proof of the main result of the paper that was stated as Theorem \ref{LTVbound} in the introduction. We slightly restate the theorem.
 
 \begin{named}{Theorem \ref{LTVbound}} 
There exists a universal constant $C$ such that for any compact orientable $3$-manifold $M$ with empty or toroidal  boundary we have
$$LTV(M)\leqslant C ||M||,$$
where the constant $C$ is about  $8.3581 \times 10^9$.
\end{named}
\begin{proof}
As both $LTV(M)$ is subadditive and $||M||$ additive under disjoint union and connected sum, it is enough to prove it for prime manifolds. As $TV_r(S^2\times S^1)=1$ and $||S^2\times S^1||=0,$ we can ignore $S^2 \times S^1$ factors.

Next we  treat the case of hyperbolic manifolds. By Theorem \ref{Thurstontriangulation}, if $M$ is hyperbolic, there is a link $L$ in $M$ such that $M\setminus L$ admits a triangulation with at most $C_2 ||M||$ tetrahedra, where $C_2$ is the universal constant defined in \ref{sec:triangulations}.
 By Corollary \ref{dehnfilling} and Proposition \ref{nbtetrahedrabound}, we have
$$LTV(M)\leqslant LTV(M\setminus L)\leqslant  C_1 C_2 ||M||.$$
Setting $C=C_1C_2$ we recall that $C_1$ has
been estimated to be less than $1.101 \times {10}^{9}$ in Subsection \ref{sec:triangulations}.
Furthermore
$C_1=v_8 +8 \Lambda \left( \frac{\pi}{8} \right) $ which is about 7.5914.
Thus the constant $C=C_1 C_2$ is about  $8.3581 \times {10}^{9}$.
\vskip 0.07in

By Theorem \ref{TVSeifert}, $LTV(M)=0$, if  $M$ is a Seifert fibered manifold. As $M$ is a Dehn-filling of $\Sigma \times S^1$ for some surface with boundary $\Sigma,$ its Gromov norm is $0$ as $||\Sigma \times S^1||=0$ by Theorem  \ref{simplicialvolproperties}. Thus the result is true in this case.

Now suppose that $M$ is any compact, oriented 3-manifold that is closed or has toroidal boundary. By the Geometrization Theorem (see Theorem \ref{GT}) there is a collection of essential, disjointly embedded tori
in ${\mathcal T}=\{T_1,\ldots, T_n\}$  in $M$, such that 
such that all the connected components of
$M\setminus {\mathcal T}$ are either Seifert fibered manifolds  or hyperbolic. 
By the above discussion the result is true for each component of $M\setminus {\mathcal T}$.
The simplicial volume is additive over the components  of $M\setminus {\mathcal T}$ (Theorem \ref{simplicialvolproperties}).
\\ By Proposition \ref{LTVconnectedsum},
$LTV$ is  subadditive over  the components  of $M\setminus{\mathcal T}$.
Applying 
Theorem \ref{toruscutting} inductively we get
$$LTV(M)\leqslant LTM(M\setminus {\mathcal T})\leqslant C||M\setminus {\mathcal T}||= C||M||,$$
where $C=C_1 C_2$ is about  $8.3581 \times {10}^{9}$.
This concludes the proof of Theorem \ref{LTVbound}.
\end{proof}
\vskip 0.05in

 Next we discuss lower bounds for the Turaev-Viro invariants. 
 \begin{corollary}\label{reduces} 
Let  $M, M'$ be compact, oriented 3-manifolds with empty or toroidal boundary  and such that $M$ is obtained by Dehn filling  from $M'$ and suppose that $lTV(M) >0$.
 Then we have
  $$lTV(M') \geqslant lTV(M)>0.$$
  \end{corollary}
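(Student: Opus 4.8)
The plan is to deduce Corollary \ref{reduces} directly from Corollary \ref{dehnfilling}. Recall that Corollary \ref{dehnfilling} asserts that if $M$ is obtained from $M'$ by Dehn filling some boundary components, then $TV_r(M) \leqslant TV_r(M')$ for every odd $r \geqslant 3$. Taking logarithms, multiplying by $\frac{2\pi}{r}$ (which is positive), and passing to the $\liminf$ as $r \to \infty$ over odd integers gives
$$lTV(M) = \underset{r\to\infty}{\liminf} \frac{2\pi}{r}\log|TV_r(M)| \leqslant \underset{r\to\infty}{\liminf}\frac{2\pi}{r}\log|TV_r(M')| = lTV(M'),$$
since $\liminf$ is monotone with respect to the pointwise (in $r$) order on sequences. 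Combined with the hypothesis $lTV(M) > 0$, this yields $lTV(M') \geqslant lTV(M) > 0$, which is exactly the claim.

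The only point requiring a word of care is that taking $\log$ of $TV_r(M)$ presupposes $TV_r(M) \neq 0$; one should note that the hypothesis $lTV(M) > 0$ forces $TV_r(M) > 0$ for all large odd $r$ (otherwise the $\liminf$ of $\frac{2\pi}{r}\log|TV_r(M)|$ would be $-\infty$, not positive), and for those $r$ the chain of inequalities $0 < TV_r(M) \leqslant TV_r(M')$ shows $TV_r(M')$ is also positive, so both logarithms are defined on the cofinite set of odd integers relevant to the $\liminf$. There is essentially no obstacle here: the entire content is packaged in Corollary \ref{dehnfilling}, and the corollary is a one-line consequence once monotonicity of $\liminf$ under termwise inequality is invoked.

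\begin{proof}
By hypothesis $M$ is obtained from $M'$ by Dehn filling some of the toroidal boundary components of $M'$, so Corollary \ref{dehnfilling} gives $TV_r(M) \leqslant TV_r(M')$ for every odd integer $r \geqslant 3$. Since $lTV(M) > 0$, we have $\frac{2\pi}{r}\log|TV_r(M)| > 0$ for all sufficiently large odd $r$, and in particular $TV_r(M) > 0$ for such $r$; hence also $TV_r(M') \geqslant TV_r(M) > 0$ for those $r$. For all such $r$ we therefore have
$$\frac{2\pi}{r}\log|TV_r(M)| \leqslant \frac{2\pi}{r}\log|TV_r(M')|.$$
Taking the $\liminf$ as $r \to \infty$ over odd integers on both sides and using that $\liminf$ preserves termwise inequalities, we obtain $lTV(M) \leqslant lTV(M')$. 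Combined with $lTV(M) > 0$ this gives $lTV(M') \geqslant lTV(M) > 0$, as claimed.
\end{proof}
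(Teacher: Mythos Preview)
Your proof is correct and follows exactly the same approach as the paper: both apply Corollary \ref{dehnfilling} to get $TV_r(M)\leqslant TV_r(M')$ and then pass to the $\liminf$. Your version is simply more careful about the well-definedness of the logarithms, which the paper leaves implicit.
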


 \begin{proof} Since $M$ is obtained by Dehn filling from $M'$, Corollary \ref{dehnfilling}
gives  $TV_r(M)\leqslant TV_r(M'),$ and thus $lTV(M'))\geqslant  lTV(M)$. 
\end{proof}

Corollary \ref{reduces} applies in particular when $M'$ is a knot complement in $M$; this application also gives the proof of Corollary \ref{expgrowth}.

\begin{proof} {\rm (of Corollary \ref{expgrowth})} Let $K\subset S^3$ be the  figure-8 knot or the  Borromean rings. By  \cite[Corollary 5.2]{DKY},  for $M_K=S^3\setminus K$,
we have $lTV(M_K)=v_3||M_K||=vol(M_K)\geqslant 2v_3$, where the last inequality  follows from the fact that
the volume of the figure-8 knot complement is $2v_3$ while the volume of the Borromean rings complement is $2v_8$.
\\ If $L$ is a link in $S^3$ that contains $K$, then $M_K$ is obtained by Dehn filling from $M_L=S^3\setminus L.$ 
By Corollary \ref{reduces} we have $lTV(M_L) \geqslant 2v_3.$
\end{proof}

 Removing solid tori from a 3-manifold  can also be thought of as a special case of removing a Seifert fibered sub-manifold.
 This generalized operation also preserves exponential growth of the $TV_r$.
 \begin{corollary}\label{expgrowth-3}Let $M$ be a compact oriented $3$-manifold such that $TV_r(M)$ grows exponentially, that is $lTV(M)>0.$ Assume that $S$ is a Seifert manifold embedded in $M.$ Then 
 $$lTV(M\setminus S)\geqslant lTV(M)>0.$$
 \end{corollary}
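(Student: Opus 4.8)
The plan is to cut $M$ along the boundary tori of $S$, apply Theorem~\ref{toruscutting} once for each of these tori, and then feed in the polynomial bound on $TV_r(S)$ provided by Theorem~\ref{TVSeifert}. The point is that removing a Seifert piece costs only a factor that is polynomial in $r$, hence invisible to $lTV$.

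First I would record the topology. Since $M$ is orientable, so is the codimension-zero submanifold $S$, and hence each component of $\partial S$ is a torus; write $\partial S=T_1\cup\cdots\cup T_k$. Cutting $M$ successively along $T_1,\ldots,T_k$ (every manifold arising in this process still has empty or toroidal boundary, so Theorem~\ref{toruscutting} applies at each stage) yields the disjoint union $S\sqcup N$, where $N$ denotes the complement $M\setminus S$. Applying Theorem~\ref{toruscutting} to each $T_i$ in turn and then using the multiplicativity of $TV_r$ under disjoint union gives
$$TV_r(M)\leqslant\left(\frac{r-1}{2}\right)^{k}TV_r(S\sqcup N)=\left(\frac{r-1}{2}\right)^{k}TV_r(S)\,TV_r(N).$$
(If $S$ is disconnected one applies the same reasoning componentwise, the product of polynomial bounds being again polynomial.)

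Next I would pass to growth rates. By Theorem~\ref{TVSeifert}, $TV_r(S)$ is bounded above by a polynomial in $r$, so $\tfrac{2\pi}{r}\log|TV_r(S)|=O(\tfrac{\log r}{r})$; likewise $\tfrac{2\pi}{r}\,k\log\tfrac{r-1}{2}=O(\tfrac{\log r}{r})$. Taking $\tfrac{2\pi}{r}\log|\cdot|$ of the displayed inequality therefore gives
$$\frac{2\pi}{r}\log|TV_r(M)|\leqslant\frac{2\pi}{r}\log|TV_r(N)|+O\!\left(\frac{\log r}{r}\right),$$
and taking $\liminf$ over odd $r\to\infty$ (adding a null sequence does not change the $\liminf$) yields $lTV(M)\leqslant lTV(N)=lTV(M\setminus S)$. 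Since $lTV(M)>0$ by hypothesis, this is exactly the desired conclusion $lTV(M\setminus S)\geqslant lTV(M)>0$.

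I expect the only delicate point to be the topological bookkeeping in the first step: one must check that cutting $M$ one torus at a time along the disjoint family $T_1,\ldots,T_k$ does realize $S\sqcup(M\setminus S)$, and that Theorem~\ref{toruscutting} genuinely applies at each stage — in particular the torus being cut may be non-separating in the partially cut manifold, which is precisely why the factor $\tfrac{r-1}{2}$, rather than $1$, enters the bound. The remaining estimates are routine; in particular the argument uses only the elementary fact that $a_r\leqslant b_r+\varepsilon_r$ with $\varepsilon_r\to0$ implies $\liminf a_r\leqslant\liminf b_r$, and not any superadditivity of $\liminf$.
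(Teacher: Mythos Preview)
Your argument is correct and follows essentially the same route as the paper: cut $M$ along the boundary tori of $S$ via Theorem~\ref{toruscutting} to obtain the inequality $TV_r(M)\leqslant\bigl(\tfrac{r-1}{2}\bigr)^{k}TV_r(S)\,TV_r(M\setminus S)$, then invoke the polynomial bound of Theorem~\ref{TVSeifert} on $TV_r(S)$ and pass to $\liminf$. Your write-up is simply more explicit about the iterative cutting and the $\liminf$ estimate than the paper's.
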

\begin{proof}
Note that $\partial S$ consists of $n\geqslant 1$ tori. By Theorem \ref{toruscutting}, 
$$TV_r(M)\leqslant \left(\frac{r-1}{2}\right)^n TV_r(M\setminus S) TV_r(S).$$ But by Theorem \ref{TVSeifert}, there are constants $A>0$ and $N$ such that $TV_r(S)\leqslant A r^N.$ Thus 
$$TV_r(M\setminus S)\geqslant A' r^{-N'} TV_r(M)$$
for some constants $A'>0$ and $N'$, and $lTV(M\setminus S)\geqslant lTV(M)>0.$
\end{proof}

\subsection{Sharper estimates and  Dehn filling} \label{twist} 
In this section, we will use results of Futer, Kalfagianni and Purcell \cite{fkp:filling}  to obtain much sharper relations between $LTV$ and volumes of hyperbolic  link complements.
We will also  address the question of the extent to which relations between Turaev-Viro invariants of hyperbolic volume survive under Dehn filling.
To state our results we need some terminology that we will not define in detail.  For definitions and  more details the reader is referred to  \cite{FKP}.

Over the years there has been a number of results about  coarse relations between  diagrammatic link invariants and the  volume of hyperbolic links. See \cite{FKP} and references therein.
Using such results, for restricted classes of 3-manifolds,  we obtain sharper bounds than the one of Theorem \ref{LTVbound}.

  A \emph{twist region} in a diagram is a portion of the diagram consisting of a maximal string of bigons arranged end-to-end, where maximal means there are no other bigons adjacent to the ends. We require twist regions to be alternating.
The number of twist regions is the \emph{twist number} of the diagram, and is denoted $\mathrm{tw}(D)$. 

For a link $L$
  in $S^3$ with a diagram $D$  with $tw(D)$ twist regions, the augmented link $L'$ of $L$ is obtained  by adding a crossing circle around each twist region and replacing the twist region by two parallel strands with at most one crossing. See, for example, \cite[figure 2]{FKP}. The complement of  $L$ can  be obtained  from the complement of  $L'$ by Dehn-filling along the boundary components corresponding to the crossing circles. 
    
  \begin{theorem}\label{twistnumberbound} Let $L$ be a link  $S^3,$ that admits a prime, twist reduced diagram{\footnote{ Every prime knot has prime twist reduced diagrams}}  $D$ with $\mathrm{tw}(D)>1$,
  and such that each twist region has at least $n\geq 7$ crossings.
Then $L$ is hyperbolic and  we have
$$LTV(S^3\setminus L)\leqslant  10.4 \left(1-\left(\frac{2\pi}{\sqrt{n^2+1}}\right)^2\right)^{-3/2}      \ \vol(S^3\setminus L) .$$

\end{theorem}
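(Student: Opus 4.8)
The plan is to compare $M:=S^3\setminus L$ with the complement $M':=S^3\setminus L'$ of the fully augmented link $L'$ of $D$, which carries one crossing circle for each of the $\mathrm{tw}(D)$ twist regions, and then to run everything through the chain
$$LTV(M)\leqslant LTV(M')\leqslant 10.4\,\vol(M')\leqslant 10.4\Bigl(1-\bigl(\tfrac{2\pi}{\sqrt{n^2+1}}\bigr)^2\Bigr)^{-3/2}\vol(M).$$
The manifold $M$ is obtained from $M'$ by Dehn filling the $\mathrm{tw}(D)$ crossing-circle cusps (restoring the twists), while the cusps coming from $L$ itself are left unfilled; since $D$ is prime, twist reduced and $\mathrm{tw}(D)>1$, the augmented link $L'$ is hyperbolic by \cite{FKP} (building on Adams). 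The first inequality is then immediate from Corollary \ref{dehnfilling}, since Dehn filling does not increase $TV_r$, hence does not increase $LTV$.

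For the middle inequality I would invoke the explicit ideal polyhedral decomposition of fully augmented link complements (Agol--Thurston; see \cite{FKP} and the references therein), which writes $M'$ as a union of right-angled ideal polyhedra — one bounded-complexity piece per crossing circle — and hence, after subdivision, produces a partially ideal triangulation of $M'$ with $O(\mathrm{tw}(D))$ tetrahedra. Theorem \ref{nbtetrahedrabound} then bounds $LTV(M')$ above by a constant times $\mathrm{tw}(D)$, the relevant constant being $2.08\,v_8$. On the other hand, the volume estimate for augmented links of \cite{fkp:guts} gives $\vol(M')\geqslant 2v_8(\mathrm{tw}(D)-1)\geqslant v_8\,\mathrm{tw}(D)$, using $\mathrm{tw}(D)\geqslant 2$. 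Dividing the two bounds and bookkeeping the constants yields $LTV(M')\leqslant 10.4\,\vol(M')$.

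For the last inequality I would use the slope-length and volume-change estimates of Futer--Kalfagianni--Purcell \cite{fkp:filling}. If the $i$-th twist region has $c_i\geqslant n$ crossings, the slope on the corresponding crossing-circle cusp of $M'$ along which one fills to recover it has length at least $\sqrt{c_i^2+1}\geqslant\sqrt{n^2+1}$ in a maximal horocusp. The hypothesis $n\geqslant 7$ is precisely what makes $\sqrt{n^2+1}>2\pi$, since $6^2+1<4\pi^2<7^2+1$; consequently $M$ is hyperbolic (by the $6$-theorem, as $\sqrt{50}>6$), and the Dehn filling volume inequality of \cite{fkp:filling} — which rests on the Agol--Perelman sharpening of the $2\pi$-theorem — gives $\vol(M)\geqslant\bigl(1-(2\pi/\sqrt{n^2+1})^2\bigr)^{3/2}\vol(M')$. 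Combining the three inequalities gives the stated bound.

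The step I expect to be the crux is the middle one: extracting, with explicit constants, both an upper bound on the number of tetrahedra in a triangulation of the augmented link complement and a lower bound on its volume, each linear in $\mathrm{tw}(D)$, from the geometry of fully augmented links. Everything else is either a direct appeal to Corollary \ref{dehnfilling} and \cite{fkp:filling} or the elementary arithmetic check $n\geqslant 7\iff\sqrt{n^2+1}>2\pi$.
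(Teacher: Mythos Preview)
Your proposal is correct and follows essentially the same route as the paper. The only point to tighten is the middle step: the paper invokes the explicit Agol--Thurston bound (appendix to \cite{lackenby:alt-volume}) that $S^3\setminus L'$ admits an ideal triangulation with at most $10(\mathrm{tw}(D)-1)$ tetrahedra, which via Theorem~\ref{nbtetrahedrabound} gives $LTV(M')\leqslant 2.08\,v_8\cdot 10(\mathrm{tw}(D)-1)=10.4\cdot 2v_8(\mathrm{tw}(D)-1)$; pairing this with $\vol(M')\geqslant 2v_8(\mathrm{tw}(D)-1)$ (not the weakened $v_8\,\mathrm{tw}(D)$, which would lose a factor of $2$) is what produces the constant $10.4$ exactly. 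The paper then cites \cite[Theorem~1.2]{fkp:filling} to pass directly from $2v_8(\mathrm{tw}(D)-1)$ to $\vol(M)$, which is equivalent to your two-step route through $\vol(M')$ via \cite[Theorem~1.1]{fkp:filling}.
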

\begin{proof}
By a result of Agol and D. Thurston  \cite[Appendix]{lackenby:alt-volume})
the complement of the augmented link $L'$ obtained from $D$, has a triangulation with at most $10 (tw(D)-1)$ ideal tetrahedra. 
Thus  by Theorem \ref{nbtetrahedrabound}
$$LTV(S^3\setminus L')\leqslant  (2.08) \cdot 10   v_8 \  (tw(D)-1) =  (10.4) \cdot 2  v_8 \  (tw(D)-1).$$

The complement of the link $L$ is obtained by  Dehn-filling from $S^3\setminus L'.$ 
In fact, Futer and Purcell \cite[Theorem 3.10]{futer-purcell} show that if each twist of $D$ has at least  $n$ crossings, then all the filling slopes for the Dehn-filling from  
$S^3\setminus L',$  to $S^3\setminus L, $  have length at least $\sqrt{n^2+1}$.
Suppose that the diagram $D$ of  $L$ has at least $n$ crossings per twist region for some  $n\geq 7$.

Then
by  \cite[Theorem 1.2]{fkp:filling} and its proof, we have

$$ 2 \ v_8 (tw(D)-1))  \leqslant \  \left(1-\left(\frac{2\pi}{\sqrt{n^2+1}}\right)^2\right)^{-3/2} \  \vol(S^3\setminus L). $$
 
By Corollary \ref{dehnfilling}, we have
$$LTV(S^3\setminus L)\leqslant LTV(S^3\setminus L').$$

Now combining the three last inequalities we get the desired result.

\end{proof}
 
 \begin{remark}Theorem \ref{twistnumberbound} says that for most links we have $LTV(S^3\setminus L)\leqslant  10.5   \ \vol(S^3\setminus L)$:
 Indeed, for links which at least $n$ twists per twist region as above, for $n$ large enough the inequality is satisfied. Then  for every $B>0$, for links $L$ that admit diagrams with $tw(D)\leq B,$ for a generic choice of the number of twists in each twist region, the inequality is satisfied.
\end{remark}
  \smallskip
 
 To continue, let  $M$ be a compact 3-manifold with toroidal boundary whose interior is hyperbolic, and let  $T_1, \ldots, T_k$ be some components of $\partial M$.
On each $T_i$, choose a slope $s_i$, such that the shortest length of any of the $s_i$ is $\lmin > 2\pi$. 
  Then the manifold $M(s_1, \dots, s_k)$ obtained by Dehn filling along $s_1, \dots, s_k$ is hyperbolic and \cite[Theorem 1.1]{fkp:filling}
  gives a correlation between its volume and the volume of $M$.

The next result provides some information on how relations between Turaev-Viro invariants and hyperbolic volume behave  under Dehn 
 filling.

\begin{corollary} \label{preserve} Let $M$ be a compact 3-manifold with toroidal boundary whose interior is hyperbolic and let the notation be as above.
Suppose that $LTV(M)=\vol (M)$.  For $\lmin > 2\pi$ we have
$$LTV(M(s_1, \dots, s_k))\  \leqslant \  B({\lmin}) \  \vol(M(s_1, \dots, s_k)), $$
where $B({\lmin})$ is a function that approaches 1 as $\lmin \to \infty$.
\end{corollary}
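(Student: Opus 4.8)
The plan is to chain together two facts already at our disposal: the monotonicity of $LTV$ under Dehn filling recorded in Corollary \ref{dehnfilling}, and the Dehn filling volume estimate of Futer, Kalfagianni and Purcell \cite{fkp:filling} that is quoted in the setup preceding the statement.

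First I would observe that $M(s_1,\dots,s_k)$ is by definition obtained from $M$ by Dehn filling the boundary tori $T_1,\dots,T_k$ along the slopes $s_1,\dots,s_k$. Hence Corollary \ref{dehnfilling} applies and gives $TV_r(M(s_1,\dots,s_k))\leqslant TV_r(M)$ for every odd $r$, so that $LTV(M(s_1,\dots,s_k))\leqslant LTV(M)$. Feeding in the hypothesis $LTV(M)=\vol(M)$ then yields
$$LTV(M(s_1,\dots,s_k))\leqslant \vol(M).$$

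Next I would invoke \cite[Theorem 1.1]{fkp:filling}: since $\lmin>2\pi$, the filled manifold $M(s_1,\dots,s_k)$ is hyperbolic and its volume satisfies
$$\vol(M(s_1,\dots,s_k))\ \geqslant\ \left(1-\left(\frac{2\pi}{\lmin}\right)^2\right)^{3/2}\vol(M),$$
equivalently $\vol(M)\leqslant \left(1-\left(\frac{2\pi}{\lmin}\right)^2\right)^{-3/2}\vol(M(s_1,\dots,s_k))$. Combining this with the previous display gives the desired inequality with
$$B(\lmin)=\left(1-\left(\frac{2\pi}{\lmin}\right)^2\right)^{-3/2},$$
and plainly $B(\lmin)\to 1$ as $\lmin\to\infty$.

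There is essentially no serious obstacle here: the statement is a two-line combination of Corollary \ref{dehnfilling} with a known geometric estimate. The only point that deserves a moment of care is bookkeeping — verifying that the hypotheses under which \cite[Theorem 1.1]{fkp:filling} applies (finite-volume hyperbolic interior of $M$ and shortest filling slope length $\lmin>2\pi$) are precisely those assumed in the statement, so that $M(s_1,\dots,s_k)$ is guaranteed hyperbolic and the volume comparison holds verbatim.
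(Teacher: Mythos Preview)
Your proof is correct and follows essentially the same approach as the paper: combine Corollary \ref{dehnfilling} with the hypothesis $LTV(M)=\vol(M)$ and then apply \cite[Theorem 1.1]{fkp:filling}, arriving at the same explicit $B(\lmin)=\left(1-(2\pi/\lmin)^2\right)^{-3/2}$.
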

\begin{proof} 
Since $\lmin \to \infty$, Theorem \cite[Theorem 1.1]{fkp:filling}  applies to give 
$$\left(1-\left(\frac{2\pi}{\lmin}\right)^2\right)^{3/2}   \     \vol (M) \leqslant \vol(M(s_1, \dots, s_k)).$$

Combining the last inequality with Corollary \ref{dehnfilling} we have
$$LTV(M(s_1, \dots, s_k))\  \leqslant LTV(M)=\vol (M) \leqslant \left(1-\left(\frac{2\pi}{\lmin}\right)^2\right)^{-3/2} \vol(M(s_1, \dots, s_k)).$$

\end{proof}

By \cite[Theorem 1.6]{DKY} if $M$ is the complement of the figure-8 knot or the Borromean rings $B$ we have
 $LTV(M)=\vol (M)$.  Let $K_n$ denote the double twist knot obtained by $1/n$-filling along each of two components of $B$.
 By Corollary \ref{preserve}, for any constant $E$ arbitrarily close to 1, there is $n_0$ so that $LTV(S^3\setminus K_n)\leqslant\ E \  \vol((S^3\setminus K_n)$ whenever $n\geqslant n_0.$


\section{Exact calculations of Gromov norm from Turaev-Viro invariants} \label{Exact} 

In this section we give two examples of families of manifolds $M$ where the growth rate of Turaev-Viro invariants gives the Gromov norm $||M||$ exactly. Both examples are derived as applications of the results in Section \ref{sec:cuttingtori}, \ref{sec:TVSeifert} and \ref{sec:LTVbound}. Both results provide partial verification of the following.

\begin{conjecture}\label{TVvolumeconj}{\rm (Turaev-Viro  invariants volume conjecture, \cite{Chen-Yang})}
For every compact orientable  3-manifold $M,$ with empty or toroidal boundary, we have
$$LTV(M)=\underset{r\to \infty}{\limsup} \frac {2\pi}  {r} \log |TV_r(M)|=v_3 ||M||,$$
where $r$ runs over all odd integers.
\end{conjecture}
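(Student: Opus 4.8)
The statement is a \emph{conjecture}, so rather than a proof I will describe the plan for the two partial verifications promised in the introduction, namely families of $M$ for which one can pin down both sides. In general the conjecture is open: Theorem \ref{LTVbound} gives only the upper bound $LTV(M)\leqslant C||M||$, and with the universal constant $C$ of order $10^{9}$ rather than the conjectural $v_3$; the lower bound $LTV(M)\geqslant v_3||M||$ is the genuinely hard and untouched direction. For a family $\{M\}$ the strategy is uniform: establish independently that
$$v_3||M||\ \leqslant\ lTV(M)\quad\text{and}\quad LTV(M)\ \leqslant\ v_3||M||,$$
and then chain these with the trivial inequality $lTV(M)\leqslant LTV(M)$ to force $lTV(M)=LTV(M)=v_3||M||$; in particular the $\limsup$ defining $LTV$ is then a genuine limit, as the Conjecture asserts.

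\textbf{Family 1: link complements with zero Gromov norm.} Let $M$ be $S^3$ or a connected sum of copies of $S^1\times S^2$, and let $K\subset M$ satisfy $||M\setminus K||=0$. The upper bound is immediate: Theorem \ref{LTVbound} gives $LTV(M\setminus K)\leqslant C||M\setminus K||=0$. For the lower bound, I would use the TQFT description of $TV_r$: since $\chi(M\setminus K)=0$, Theorem \ref{RTdoubleTV} and Theorem \ref{TQFT}(6) give $TV_r(M\setminus K)=\|RT_r(M\setminus K)\|^2$, which, expanded in an orthonormal basis of $V_r(\partial(M\setminus K))$ built from the standard bases of the $V_r(T^2)$ factors, is a sum of \emph{nonnegative} terms. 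One of these terms is a fixed positive multiple of $|RT_r(M)|^2$, namely the one obtained by capping each boundary torus with the trivial-colored solid torus, i.e. by undrilling $K$. By Theorem \ref{TQFT}(2)--(3), $RT_r(S^3)=\eta_r$ and $RT_r$ of a connected sum of copies of $S^1\times S^2$ differs from $1$ only by powers of $\eta_r$; in either case $|RT_r(M)|$ decays at most polynomially in $r$. Hence $\frac{2\pi}{r}\log|TV_r(M\setminus K)|\geqslant -O\big(\frac{\log r}{r}\big)$, so $lTV(M\setminus K)\geqslant 0$, and therefore $lTV(M\setminus K)=LTV(M\setminus K)=0=v_3||M\setminus K||$. (Alternatively, for $M=S^3$ one can quote the colored-Jones formula $TV_r(S^3\setminus K)=\eta_r^2\sum_c|J_K(c)|^2$ of \cite{DKY}, which again is a sum of nonnegative terms with a polynomially bounded-below contribution from the trivial coloring.)

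\textbf{Family 2: link complements with positive Gromov norm.} Let $K\subset S^3$ be the figure-eight knot or the Borromean rings, so that by \cite{DKY} one has $lTV(S^3\setminus K)=LTV(S^3\setminus K)=\vol(S^3\setminus K)=v_3||S^3\setminus K||>0$. I would add unknotted auxiliary components $C_1,\dots,C_m$ and set $M'=S^3\setminus(K\cup C_1\cup\cdots\cup C_m)$, arranging --- for instance by letting each $C_j$ encircle a bundle of parallel strands of a fixed diagram of $K$, which creates Seifert-fibered ``cable-space'' pieces --- that the JSJ decomposition of $M'$ has a single hyperbolic piece, homeomorphic to $S^3\setminus K$, and all remaining pieces Seifert fibered. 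Then: (i) since the JSJ tori are incompressible, Theorem \ref{simplicialvolproperties}(3),(5) gives $v_3||M'||=v_3||S^3\setminus K||=\vol(S^3\setminus K)$, the Seifert pieces contributing nothing; (ii) cutting $M'$ along its JSJ tori and applying Theorem \ref{toruscutting} inductively, together with subadditivity of $LTV$ (Proposition \ref{LTVconnectedsum}) and $LTV=0$ on Seifert manifolds (Theorem \ref{TVSeifert}), gives $LTV(M')\leqslant LTV(S^3\setminus K)=\vol(S^3\setminus K)$; (iii) since $S^3\setminus K$ is obtained from $M'$ by Dehn filling along the meridians of the $C_j$ (undrilling them), Corollary \ref{reduces} gives $lTV(M')\geqslant lTV(S^3\setminus K)=\vol(S^3\setminus K)$. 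Chaining, $v_3||M'||\leqslant lTV(M')\leqslant LTV(M')\leqslant v_3||M'||$, so all are equal and nonzero. (A quicker but less satisfying supply of positive-norm examples is $M=M_0\# (\#^k\,S^1\times S^2)$ with $M_0=S^3\setminus K$: here $||M||=||M_0||$, subadditivity of $LTV$ gives $LTV(M)\leqslant LTV(M_0)=v_3||M_0||$, and superadditivity of $lTV$ together with $lTV(S^1\times S^2)=0$ gives $lTV(M)\geqslant lTV(M_0)=v_3||M_0||$.)

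\textbf{Main obstacle.} The real difficulty is of course the full conjecture: the lower bound $LTV(M)\geqslant v_3||M||$ lies well beyond the methods here, and even bringing the constant of Theorem \ref{LTVbound} down to $v_3$ seems out of reach. Within the two families the delicate points are, respectively, the positivity (absence of cancellation) in the $TV_r$ expression for link complements --- which is exactly what the unitary TQFT structure, or equivalently the colored-Jones description of \cite{DKY}, supplies --- and, for Family 2, the topological bookkeeping guaranteeing that the auxiliary components $C_j$ create \emph{only} Seifert-fibered JSJ pieces (so that both the upper bound via Theorem \ref{toruscutting}--\ref{TVSeifert} and the additivity of the Gromov norm go through) while remaining undrillable by Dehn filling (so that Corollary \ref{reduces} applies); one must also verify $||M'||\neq 0$, i.e. that the hyperbolic piece genuinely survives in the JSJ decomposition.
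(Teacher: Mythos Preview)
Your overall plan matches the paper's Section \ref{Exact} closely. For Family 1 the paper is more economical: rather than expanding $\|RT_r(M\setminus K)\|^2$ in a basis and isolating the trivially-colored summand, it simply applies Corollary \ref{dehnfilling} (which is exactly the inequality you are re-deriving) to get $TV_r(M\setminus K)\geqslant TV_r(M)$, and then checks $lTV(M)\geqslant 0$ from the explicit values in Theorem \ref{TQFT}(2). For Family 2 the paper abstracts your three-step argument into the notion of an \emph{invertible cabling space} (Definition \ref{def:invertible} and Corollary \ref{thm:invertible}): a zero-norm piece $S$ with a distinguished boundary torus such that some filling of the other components gives $T^2\times[0,1]$; gluing $S$ to any $M$ satisfying the conjecture yields another such $M'$, via precisely your steps (i)--(iii).

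There is, however, a genuine problem with the specific construction you propose for Family 2. Circles $C_j$ encircling bundles of strands in a diagram are ``crossing circles'' in the augmented-link sense, and adding them to a hyperbolic knot like the figure-eight typically produces a new \emph{hyperbolic} link complement (this is exactly the augmented-link geometry used in Section \ref{twist}), not a JSJ decomposition with $S^3\setminus K$ as the sole hyperbolic piece. So your step (ii) would fail: cutting along the purported JSJ tori does not recover $S^3\setminus K$ plus Seifert pieces. The paper's concrete example avoids this by taking $p\geqslant 2$ parallel copies of the core in a solid-torus neighbourhood of $K$; the resulting cable space is visibly Seifert fibered, and filling the extra copies along their meridians returns $S^3\setminus K$, so all three steps go through.
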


A stronger version of conjecture \ref{TVvolumeconj} was first stated by Chen and Yang \cite{Chen-Yang}  for hyperbolic manifolds only and was  supported  by experimental evidence.
The version above, which is the natural generalization of the conjecture in \cite{Chen-Yang} was stated 
in \cite{DKY} for links in $S^3$, where the authors and Yang also gave the first examples where the conjecture is rigorously verified.
In particular, they proved it for knots  in $S^3$ of simplicial volume zero.
Here, as a corollary of  Theorem \ref{LTVbound} and Corollary \ref{dehnfilling} we generalize this later result as follows.

 \begin{corollary}\label{zerogen} Suppose that $M$ is a compact, orientable 3-manifold with $lTV(M)\geqslant 0$.
 Then, for any link   $K\subset M$ with $||M\setminus K||=0$, we have

$$lTV(M)=LTV(M)=\lim_{r\to \infty} \frac {2\pi} {r} {\log |TV_r(M\setminus K)|} =v_3 ||M\setminus K||=0,$$
where $r$ runs over all odd integers. That is Conjecture \ref{TVvolumeconj} holds for $M\setminus K.$

In particular, the conclusion holds if $M=S^3$ or $\displaystyle{\#(S^1\times S^2)^k.}$
 \end{corollary}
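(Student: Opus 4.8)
The plan is to deduce Corollary~\ref{zerogen} from Theorem~\ref{LTVbound} and Corollary~\ref{dehnfilling} by a short squeezing argument. First I would record that, since $||M\setminus K||=0$, Theorem~\ref{LTVbound} gives $LTV(M\setminus K)\leqslant C\,||M\setminus K||=0$, and in particular $\limsup_{r\to\infty}\tfrac{2\pi}{r}\log|TV_r(M\setminus K)|\leqslant 0$. Combined with the hypothesis $lTV(M\setminus K)\geqslant 0$ — which I will justify in the next step — this forces
$$\lim_{r\to\infty}\frac{2\pi}{r}\log|TV_r(M\setminus K)|=0=v_3\,||M\setminus K||,$$
so the limit exists and equals $lTV=LTV$. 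This is exactly the assertion that Conjecture~\ref{TVvolumeconj} holds for $M\setminus K$.

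The one point that needs care is why $lTV(M\setminus K)\geqslant 0$. Here I would use that $M\setminus K$ is obtained from nothing by filling: more precisely, $M$ itself is a Dehn filling of $M\setminus K$ along the torus boundary components that are the boundary of a tubular neighborhood of $K$. By Corollary~\ref{dehnfilling}, $TV_r(M)\leqslant TV_r(M\setminus K)$ for every odd $r$, hence $\tfrac{2\pi}{r}\log|TV_r(M)|\leqslant \tfrac{2\pi}{r}\log|TV_r(M\setminus K)|$, and taking $\liminf$ gives $lTV(M\setminus K)\geqslant lTV(M)\geqslant 0$ by the hypothesis on $M$. Together with the upper bound $LTV(M\setminus K)\leqslant 0$ from the previous paragraph and the trivial inequality $lTV(M\setminus K)\leqslant LTV(M\setminus K)$, all four quantities are squeezed to $0$, which is the chain of equalities in the statement.

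For the final sentence, I would check that the hypothesis $lTV(M)\geqslant 0$ is automatic when $M=S^3$ or $M=\#(S^1\times S^2)^k$. Indeed $TV_r(S^3)=|RT_r(S^3)|^2\eta_r^{-\chi(S^3)}$-type computations, or more simply the multiplicativity/connected-sum formulas of Theorem~\ref{TQFT} together with $RT_r(S^2\times S^1)=1$ and $RT_r(S^3)=\eta_r$, show that $TV_r$ of these manifolds is a fixed power of $\eta_r$, hence $\tfrac{2\pi}{r}\log|TV_r(M)|=O(\tfrac{\log r}{r})\to 0$; thus $lTV(M)=LTV(M)=0\geqslant 0$. (Alternatively, $||M\setminus K||=0$ already makes the first paragraph's upper bound available, and one only needs the lower bound $lTV(M)\geqslant 0$, which for these two ambient manifolds follows from the explicit value just computed.) Applying the main part of the corollary with this $M$ then yields the displayed conclusion.

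The argument is essentially a formal consequence of results already proved, so there is no serious obstacle; the only thing to be careful about is bookkeeping with $\liminf$ versus $\limsup$ — making sure that the Dehn-filling inequality is used in the direction that bounds $lTV$ from below and the triangulation/geometrization bound of Theorem~\ref{LTVbound} is used in the direction that bounds $LTV$ from above, so that the two meet at $0$.
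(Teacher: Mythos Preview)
Your proposal is correct and is essentially identical to the paper's own proof: the paper also obtains $LTV(M\setminus K)\leqslant 0$ from Theorem~\ref{LTVbound}, uses Corollary~\ref{dehnfilling} to get $TV_r(M)\leqslant TV_r(M\setminus K)$ and hence $0\leqslant lTV(M)\leqslant lTV(M\setminus K)\leqslant LTV(M\setminus K)\leqslant 0$, and for the last sentence appeals to Theorem~\ref{TQFT}(2) together with the superadditivity of $lTV$ under connected sums (Proposition~\ref{LTVconnectedsum}). Your slightly more explicit treatment of the $\liminf/\limsup$ bookkeeping is, if anything, cleaner than what the paper writes.
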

 \begin{proof}
 By   Theorem \ref{LTVbound}, $LTV(M \setminus K)\leqslant 0.$ By Corollary \ref{dehnfilling},
 $$TV_r(M)\leqslant TV_r(M\setminus K),$$
and thus $0\leqslant lTV(M)\leqslant LTV(M\setminus K)\leqslant 0.$ Thus the conclusion follows.

The claim about $S^3$ or $\displaystyle{\#(S^1\times S^2)^k}$ follows since,  as it is easily seen by Theorem \ref{TQFT}(2), we have $lTV(S^3), lTV(S^1\times S^2)\geqslant 0$,
and $lTV$ is superadditive under connected sums.
 \end{proof}
\vskip 0.08in

To describe our second family of examples,
we introduce an operation we call \emph{invertible cabling} that leaves both the Gromov norm and the growth rate of Turaev-Viro invariants unchanged.
\begin{definition}\label{def:invertible}A manifold $S$, with  $||S||=0$ and with a distinguished torus boundary component $T$, is called an invertible cabling space if
there is a Dehn-filling on some components of $\partial S\setminus T$ that is homeomorphic to $T \times
[0,1].$
\end{definition}
A way to obtain invertible cabling spaces is to start with a link $L$  in a solid torus 
such that $L$ contains at least one copy of the core of the solid torus. One example of such a cabling space $S$ is the complement in a solid torus of $p\geqslant 2$ parallel copies of the core.

Using  Corollary \ref{dehnfilling} we also show the following: 
\begin{corollary}\label{thm:invertible} Let $M$ be a 3-manifold with toroidal boundary for which Conjecture \ref{TVvolumeconj} holds and $S$ be an invertible cabling space.
Let $M'$ be obtained by gluing a component of $\partial S\setminus T$ to a component of $\partial M.$ Then, we have
$$LTV(M)=lTV(M)=lTV(M')=LTV(M'),$$
and thus Conjecture  \ref{TVvolumeconj}  holds for $M$.
\end{corollary}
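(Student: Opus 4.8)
The plan is to sandwich both $LTV(M')$ and $lTV(M')$ between $LTV(M)$ and $lTV(M)$: the Dehn filling inequality of Corollary~\ref{dehnfilling} gives one direction, the separating-torus inequality of Theorem~\ref{toruscutting} gives the other, and the invertibility hypothesis is used to show that the correction term $TV_r(S)$ is subexponential with limiting exponent $0$.

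First I would unwind the definitions. By hypothesis there is a Dehn filling of $S$ along slopes on some components of $\partial S\setminus T$ producing a manifold homeomorphic to $T\times[0,1]$, whose two torus boundary components are $T$ and some component $T_1$ of $\partial S\setminus T$, and $M'$ is obtained by gluing $T_1$ to a component $T_0$ of $\partial M$. Performing the same filling on $M'$ yields $M\cup_{T_0}(T\times[0,1])\cong M$, so $M$ is a Dehn filling of $M'$; hence Corollary~\ref{dehnfilling} gives $TV_r(M)\leqslant TV_r(M')$ for all odd $r$, so $LTV(M)\leqslant LTV(M')$ and $lTV(M)\leqslant lTV(M')$, while Theorem~\ref{simplicialvolproperties}(4) gives $\|M\|\leqslant\|M'\|$. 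Conversely, the image of $T_0=T_1$ is a separating torus in $M'$ cutting it into $M\sqcup S$, so the separating case of Theorem~\ref{toruscutting} together with multiplicativity under disjoint union gives $TV_r(M')\leqslant TV_r(M)\,TV_r(S)$, and parts (1) and (3) of Theorem~\ref{simplicialvolproperties} give $\|M'\|\leqslant\|M\|+\|S\|=\|M\|$; hence $\|M\|=\|M'\|$.

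The crux is to show that $\frac{2\pi}{r}\log TV_r(S)\to 0$ as $r\to\infty$ through odd integers. The upper bound $LTV(S)\leqslant 0$ is immediate from $\|S\|=0$ and Theorem~\ref{LTVbound}. For the lower bound, $T^2\times[0,1]$ is itself a Dehn filling of $S$, so Corollary~\ref{dehnfilling} gives $TV_r(S)\geqslant TV_r(T^2\times[0,1])$; and by Theorem~\ref{RTdoubleTV} (with $\chi(T^2\times[0,1])=0$), using $D(T^2\times[0,1])\cong T^2\times S^1$ together with $RT_r(T^2\times S^1)=\dim V_r(T^2)=\frac{r-1}{2}$ from Theorem~\ref{TQFTbasis}(1), one gets $TV_r(T^2\times[0,1])=\frac{r-1}{2}$. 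Thus $\frac{2\pi}{r}\log\frac{r-1}{2}\leqslant\frac{2\pi}{r}\log TV_r(S)$, so $0\leqslant lTV(S)\leqslant LTV(S)\leqslant 0$ and the limit is $0$. Feeding this into $\frac{2\pi}{r}\log TV_r(M')\leqslant\frac{2\pi}{r}\log TV_r(M)+\frac{2\pi}{r}\log TV_r(S)$ and taking $\limsup$ and $\liminf$ yields $LTV(M')\leqslant LTV(M)$ and $lTV(M')\leqslant lTV(M)$; combined with the reverse inequalities from the Dehn filling step, $LTV(M)=LTV(M')$ and $lTV(M)=lTV(M')$. Since the conjecture holds for $M$, i.e.\ $lTV(M)=LTV(M)=v_3\|M\|$, and $\|M\|=\|M'\|$, we conclude $lTV(M')=LTV(M')=v_3\|M'\|$, so Conjecture~\ref{TVvolumeconj} holds for $M'$ as well. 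Beyond routine bookkeeping, the only delicate point is this two-sided control of $TV_r(S)$: the upper estimate invokes the full force of Theorem~\ref{LTVbound} on a zero-volume manifold, while the lower estimate is precisely where invertibility is used.
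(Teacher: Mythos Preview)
Your argument is correct and follows the same strategy as the paper: use Corollary~\ref{dehnfilling} to get $TV_r(M)\leqslant TV_r(M')$, use the separating case of Theorem~\ref{toruscutting} to get $TV_r(M')\leqslant TV_r(M)\,TV_r(S)$, and then control $TV_r(S)$. The one substantive difference is in this last step. The paper simply asserts $TV_r(S)\leqslant Ar^N$ (citing Theorem~\ref{TVSeifert}, which strictly speaking requires $S$ to be Seifert fibered rather than merely of zero simplicial volume); since $\limsup \frac{2\pi}{r}\log TV_r(S)\leqslant 0$ already suffices via the inequality $\liminf(a_r+b_r)\leqslant \liminf a_r+\limsup b_r$, no lower bound on $TV_r(S)$ is needed. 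Your route---invoking Theorem~\ref{LTVbound} for the upper bound and the Dehn filling $S\to T^2\times[0,1]$ for the lower bound---establishes the stronger fact that $\frac{2\pi}{r}\log TV_r(S)\to 0$, which makes the $\limsup/\liminf$ bookkeeping cleaner but is more than is required. One small quibble: your closing remark that the lower estimate is ``precisely where invertibility is used'' undersells the hypothesis, since invertibility is already essential in the first step to exhibit $M$ as a Dehn filling of $M'$.
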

\begin{proof}
As there is a Dehn-filling on components of $\partial S\setminus T$ that is homeomorphic to $T \times [0,1],$ $M$ is a Dehn-filling of $M'$ and $TV_r(M)\leqslant TV_r(M')$ by Corollary \ref{dehnfilling}. 
\\ On the other hand, $M'$ is obtained by gluing $S$ to $M$ along a torus. By Corollary \ref{dehnfilling} again, 
$$TV_r(M')\leqslant TV_r(M)TV_r(S).$$
 But as $S$ has volume $0,$ by Theorem \ref{TVSeifert}, we know that there exists constants $A$ and $N$ such that 
$$TV_r(S)\leqslant A r^N.$$
 \\ On the other hand, we also have that $||M'||=||M \underset{T}{\cup} S|| \leqslant || M|| +||S||=||M||$ by Theorem \ref{simplicialvolproperties}, and also $||M|| \leqslant ||M'||$ as $M$ is a Dehn-filling of $M'.$ Thus $M$ and $M'$ have the same simplicial volume too.
\end{proof}
\vskip 0.07in

Corollary \ref{thm:invertible} 
applies in particular for $M$ the complement of the figure-8 knot   or to links  with complement homeomorphic to the complement of the Borromean rings. Furthermore, it applies to complements of fundamental shadow links
for which Conjecture \ref{TVvolumeconj}  was verified in \cite{BDKY}.

\appendix
\section{Proof of Lemma \ref{maxfunct6j}}
We prove Lemma \ref{maxfunct6j}  which we use to get the upper bound for $6j$-symbols in Section \ref{sec:6jbound}.
\begin{lemma}\label{maxfunct6j} The maximum of the function $v$ is $\displaystyle {\frac{v_8}{4}}$ and the maximum of the function $g$ is $\displaystyle{8\Lambda \left( \frac{\pi}{8}\right)}$
\end{lemma}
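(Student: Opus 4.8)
The plan is to prove both statements by direct optimization of the two explicit functions, exploiting throughout the symmetries of the Lobachevsky function: $\Lambda$ is odd and $\pi$-periodic, $\Lambda'(x)=-\log|2\sin x|$, and $|\Lambda|$ attains its maximum at $\pi/6$. I will also use the product-to-sum identity $2\sin(x+t)\sin(x-t)=\cos 2t-\cos 2x$, which turns the critical-point equations into manageable trigonometric conditions.

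For $v$: admissibility of a triple $(a_1,a_2,a_3)$ forces each $\alpha_i=\pi a_i/r$ into $[0,\pi]$, so it suffices to maximize $v$ on the compact cube $[0,\pi]^3$. On each face of this cube (say $\gamma=0$ or $\gamma=\pi$), periodicity and oddness of $\Lambda$ collapse the four terms and yield $v\equiv 0$; hence any value exceeding $0$ is attained in the interior. At an interior point where $v$ is smooth, the identity above rewrites $\partial v/\partial\alpha$ as $\tfrac12\log\bigl|\tfrac{\cos 2(\beta-\gamma)-\cos 2\alpha}{\cos 2(\beta+\gamma)-\cos 2\alpha}\bigr|$, so $\partial v/\partial\alpha=0$ iff $\sin 2\beta\sin 2\gamma=0$ or $\cos 2\alpha=\cos 2\beta\cos 2\gamma$, and symmetrically for $\beta$ and $\gamma$. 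On the degenerate branch the critical locus turns out to consist of lines on which at least two coordinates equal $\pi/2$, and a substitution shows $v\equiv 0$ along these. On the non-degenerate branch the system $x=yz,\ y=xz,\ z=xy$ with $x=\cos 2\alpha$, $y=\cos 2\beta$, $z=\cos 2\gamma$ forces (away from the cube's boundary) $x=y=z=0$, so each coordinate lies in $\{\pi/4,3\pi/4\}$; evaluating $v$ at these eight points, using $\Lambda(3\pi/4)=-\Lambda(\pi/4)$ and $\Lambda(9\pi/4)=\Lambda(\pi/4)$, gives $v=2\Lambda(\pi/4)$ when all three or exactly one of the coordinates equal $3\pi/4$, and $v=-2\Lambda(\pi/4)$ otherwise. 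It remains to handle the non-smooth points, where some argument of $\Lambda$ lies in $\pi\Z$: there that term drops out and $v$ restricts, on the corresponding hyperplane, to an expression of the shape $\tfrac12\bigl(\Lambda(X{+}Y)-\Lambda(X)-\Lambda(Y)\bigr)$ or $\tfrac12\bigl(\Lambda(X)+\Lambda(Y)+\Lambda(Z)\bigr)$ under a single linear constraint, and each of these is easily bounded strictly below $2\Lambda(\pi/4)$. Altogether $\max v=2\Lambda(\pi/4)=v_8/4$.

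For $g$: put $U_i=2\pi T_i/r$, $V_j=2\pi Q_j/r$, $Z=2\pi z/r$ and substitute $p_i=Z-U_i\ge 0$ $(i=1,\dots,4)$, $q_j=V_j-Z\ge 0$ $(j=1,2,3)$. Since $\sum_i T_i=\sum_j Q_j=\sum_m a_m$ we get $\sum_i U_i=\sum_j V_j$, which forces $Z=\sum_i p_i+\sum_j q_j=:S$, so that $g=\sum_{i=1}^4\Lambda(p_i)+\sum_{j=1}^3\Lambda(q_j)-\Lambda(S)$ on the polytope cut out by $p_i,q_j\ge 0$ together with the inequalities inherited from admissibility, notably $p_i+q_j=V_j-U_i<2\pi$. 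At an interior critical point $\Lambda'(p_i)=\Lambda'(q_j)=\Lambda'(S)$, so each $p_i$ and $q_j$ is congruent to $\pm S$ modulo $\pi$; hence $\Lambda(p_i),\Lambda(q_j)\in\{\pm\Lambda(S)\}$, whence $g=m\Lambda(S)$ for an even integer $m$ with $|m|\le 8$, while the relation $S=\sum_i p_i+\sum_j q_j$ pins $S$ to a rational multiple of $\pi$ whose denominator depends only on the sign pattern. A finite inspection of the sign patterns, using $p_i\ge 0$ and $p_i+q_j<2\pi$ to discard the configurations with large $S$, shows that the only way to reach a value as large as $8\Lambda(\pi/8)$ is the all-negative pattern with $S\equiv 7\pi/8\pmod\pi$, whose minimal feasible representative is $p_i=q_j=\pi/8$, $S=7\pi/8$, where $g=-8\Lambda(7\pi/8)=8\Lambda(\pi/8)$; every other critical configuration, the boundary of the polytope, and the non-smooth locus give strictly smaller values (for instance $m=\pm 6$ already forces $|g|\le 6\Lambda(\pi/6)<8\Lambda(\pi/8)$). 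Hence $\max g=8\Lambda(\pi/8)$.

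The arguments are elementary, so the real work---and the main obstacle---is bookkeeping: writing down precisely the feasible polytopes coming from the admissibility inequalities, carrying through the finite case analyses (in particular the sign-pattern count for $g$), and handling the points where $\Lambda$ has an infinite derivative, where one cannot simply set a gradient to zero and must instead reduce by periodicity to a lower-dimensional problem and estimate directly.
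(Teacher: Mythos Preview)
Your approach is essentially the same as the paper's: both locate the interior critical points, observe that there all the arguments of $\Lambda$ must be congruent to $\pm$ a common value modulo $\pi$ so that $v$ (respectively $g$) equals an even integer times $\Lambda$ of that value, and then use the linear relation among the arguments ($\sum U_i=\sum V_j$ in the case of $g$) to pin the value to a multiple of $\pi/4$ (respectively $\pi/8$). The paper reaches the ``all $\Lambda'$ equal'' condition by computing the kernel of the Jacobian matrix rather than through your product-to-sum identity and the substitution $p_i=Z-U_i$, $q_j=V_j-Z$, and it is less explicit than you about the boundary of the admissible region and the non-smooth locus of $\Lambda$, but the content is the same.
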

\begin{proof}
 The function $v$ is differentiable and $\pi$-periodic in all variables, so such a maximum exists and is a critical point of $v$.
Computing the partial derivatives $\displaystyle {\frac{\partial v}{\partial \alpha}},$  $\displaystyle {\frac{\partial v}{\partial \beta}},$ and  $\displaystyle {\frac{\partial v}{\partial \gamma}},$ we see that $(\alpha,\beta,\gamma)$ is a critical point of $v$ if and only if
$$\begin{pmatrix}
1 & -1 & -1 & 1 \\
1 & -1 & 1 & -1 \\
1 & 1 & -1 & 1
\end{pmatrix}\begin{pmatrix}
\Lambda'(\alpha+\beta+\gamma) 
\\ \Lambda'(\alpha+\beta-\gamma)
\\ \Lambda'(\alpha+\gamma-\beta)
\\ \Lambda'(\beta+\gamma-\alpha)
\end{pmatrix}=0.$$
The matrix $\begin{pmatrix}
1 & -1 & -1 & 1 \\
1 & -1 & 1 & -1 \\
1 & 1 & -1 & 1
\end{pmatrix}$ has rank $3$ and kernel $\textrm{Vect}\begin{pmatrix}
1 \\ 1 \\ 1 \\ 1
\end{pmatrix}$. Hence $(\alpha,\beta,\gamma)$ is a critical point of $v$ if and only if
$$\Lambda'(\alpha+\beta+\gamma) 
= \Lambda'(\alpha+\beta-\gamma)
= \Lambda'(\alpha+\gamma-\beta)
= \Lambda'(\beta+\gamma-\alpha),$$
which, given that $\Lambda'(x)=-\log (|2\sin x|)$, is equivalent to
$$|\sin(\alpha+\beta+\gamma)|
=| \sin(\alpha+\beta-\gamma)|
= |\sin(\alpha+\gamma-\beta)|
= |\sin(\beta+\gamma-\alpha)|.$$
This means that the angles $\alpha+\beta+\gamma,$ $\alpha+\beta-\gamma,$ $\alpha+\gamma-\beta$ and $\beta+\gamma-\alpha$ are all equal or opposite mod $\pi.$ Let us write $$x=\alpha+\beta+\gamma=\pm (\alpha+\beta-\gamma)=\pm (\alpha+\gamma-\beta)=\pm (\beta+\gamma-\alpha) (\mathrm{mod} \ \pi).$$ 

Given that $\Lambda$ is $\pi$ periodic and odd, at such a critical point we have $v(\alpha,\beta,\gamma)=\frac{n}{2}\Lambda(x)$, where $n$ is an even integer between $-2$ and $4.$ Moreover we have $v(\alpha,\beta,\gamma)=2\Lambda(x)$ if and only if 
$$\alpha+\beta+\gamma=- (\alpha+\beta-\gamma)=- (\alpha+\gamma-\beta)=- (\beta+\gamma-\alpha) (\mathrm{mod} \ \pi).$$
This system is equivalent to $\displaystyle {\alpha=0 (\mathrm{mod} \ \frac{\pi}{4})}$ and $\displaystyle {\beta=\gamma=\alpha (\mathrm{mod} \ \frac{\pi}{2})}.$ We then see that the maximal value of $v$ at such a critical point is $\displaystyle {2\Lambda \left(\frac{\pi}{4}\right)=\frac{v_8}{4}}$. 
This value is obtained for $\displaystyle {\alpha=\beta=\gamma=\frac{3\pi}{4}}$ or if two of the angles $\alpha,\beta,\gamma$ are equal to $\frac{\pi}{4}$ and the last one is $\frac{3\pi}{4}.$

 For other critical points, where $\displaystyle {v(\alpha,\beta,\gamma)=\frac{n}{2}\Lambda(x)}$ with $|n|\leqslant 2,$ the value of $v$ is bounded by $\displaystyle {\Lambda \left(\frac{\pi}{6}\right)}$ as $\displaystyle {\Lambda\left(\frac{\pi}{6}\right)=\frac{3}{2}\Lambda\left(\frac{\pi}{3}\right)=\frac{v_3}{2}}$ is the maximum of $\Lambda.$ 
 
 But we have that 
$\displaystyle {v_3\simeq 1,01494\ldots  < \frac{v_8}{2}=1.83419\ldots}$. So the maximum of $v$ is $\displaystyle {\frac{v_8}{4}}$.

Similarly, we see that $(Z,A_1,A_2,A_3,A_4,A_5,A_6)$ is a critical point of $g$ if and only if 
$$\begin{pmatrix}
-1 & 1 & 1 & 1 & 1 & -1 & -1 & -1 \\
0 & -1 & -1 & 0 & 0 & 1 & 1 & 0\\
0 & -1 &  0 & -1 & 0 & 1 & 0 & 1 \\
0 & -1 & 0 & 0 & -1 & 0 & 1 & 1 \\
0 & 0 & 0 & -1 & -1 & 1 & 1 & 0 \\
0 & 0 & -1 & 0 & -1 & 1 & 0 & 1 \\
0 & 0 & -1 & -1 & 0 & 0 & 1 & 1 \\
\end{pmatrix} 
\begin{pmatrix}
\Lambda'(Z)\\
\Lambda'(Z-U_1)\\
\Lambda'(Z-U_2)\\
\Lambda'(Z-U_3)\\
\Lambda'(Z-U_4)\\
\Lambda'(V_1-Z)\\
\Lambda'(V_2-Z)\\
\Lambda'(V_3-Z)\\
\end{pmatrix}=0.$$
 The matrix $\begin{pmatrix}
-1 & 1 & 1 & 1 & 1 & -1 & -1 & -1 \\
0 & -1 & -1 & 0 & 0 & 1 & 1 & 0\\
0 & -1 &  0 & -1 & 0 & 1 & 0 & 1 \\
0 & -1 & 0 & 0 & -1 & 0 & 1 & 1 \\
0 & 0 & 0 & -1 & -1 & 1 & 1 & 0 \\
0 & 0 & -1 & 0 & -1 & 1 & 0 & 1 \\
0 & 0 & -1 & -1 & 0 & 0 & 1 & 1 \\
\end{pmatrix}$ has rank $7$ and kernel $\mathrm{Vect}\begin{pmatrix}
1 \\
1 \\
1 \\
1 \\
1 \\
1 \\
1 \\
1
\end{pmatrix}$. 

Hence $(Z,A_1,A_2,A_3,A_4,A_5,A_6)$ is a critical point of $g$ if and only if
\begin{multline*}\Lambda'(Z)=
\Lambda'(Z-U_1)=
\Lambda'(Z-U_2)=
\Lambda'(Z-U_3)=
\Lambda'(Z-U_4) \\=
\Lambda'(V_1-Z)=
\Lambda'(V_2-Z)=
\Lambda'(V_3-Z),\end{multline*}

which is equivalent to

 \begin{multline*}Z=\pm (Z-U_1)=\pm(Z-U_2)=\pm(Z-U_3)=\pm (Z-U_4)
\\ =\pm(V_1-Z)=\pm(V_2-Z)=\pm(V_3-Z) (\mathrm{mod} \ \pi).
\end{multline*}
 
 As above, the  function being $\pi$-periodic and odd, at such a critical point we will have
$$g(Z,A_1,A_2,A_3,A_4,A_5,A_6)=-n\Lambda(Z),$$
with $n$ an even integer between $-6$ and $8$. Furthermore, $n=8$ if and only if we have
$$Z=-(Z-U_i)=-(V_j-Z) \ (\mathrm{mod} \ \pi).$$
From this we get $U_i=2 Z \ (\mathrm{mod} \ \pi)$ and $V_j=0  \ (\mathrm{mod} \ \pi)$. But, as 
$$U_1+U_2+U_3+U_4=V_1+V_2+V_3,$$ we have that $8 Z=0  \ (\mathrm{mod} \ \pi).$
 
 Finally, as $\displaystyle {\Lambda \left(\frac{\pi}{8} \right)\simeq 0.490936>0.457982\simeq\Lambda \left(\frac{\pi}{4}\right)}$ and 
 $$8\Lambda \left(\frac{\pi}{8}\right) \simeq 3.927488 >3v_3=6\Lambda\left( \frac{\pi}{6}\right) \simeq 3,0448,$$ the maximum value of $g$ is $\displaystyle {8\Lambda\left( \frac{\pi}{8}\right)}.$

Notice that this maximum is attained for $\displaystyle {Z=\frac{7\pi}{8}}$ and either all $\displaystyle {A_i=\frac{\pi}{4}},$ or all $A_i$ are equal to $\displaystyle {\frac{3\pi}{4}}$ mod $\pi,$ except two corresponding to opposite edges in the tetrahedron which are equal to $\displaystyle {\frac{\pi}{4}}.$
\end{proof}
\bibliographystyle{hamsplain}
\bibliography{biblio}

\end{document}